\documentclass[11pt]{article}

\usepackage{amstext, amsmath,latexsym,amsbsy,amssymb}
\usepackage{enumitem}

\newtheorem{proposition}{Proposition}[section]
\newtheorem{remark}{Remark}[section]
\newenvironment{proof}{{\bf Proof\ }}{\QED\\}
\newtheorem{lemma}{Lemma}[section]

\numberwithin{equation}{section}
\newtheorem{theorem}{Theorem}[section]

\newcommand{\QED}{\hspace*{\fill}\rule{2.5mm}{2.5mm}}

\allowdisplaybreaks
\begin{document}
\title{Convergence to Equilibrium of Some Kinetic Models}
\author{\\ Minh-Binh Tran\\
 Basque Center for Applied Mathematics\\
 Mazarredo 14, 48009 Bilbao Spain\\
Email:   tbinh@bcamath.org}
\maketitle
\maketitle
\begin{abstract}
We introduce in this paper a new approach to the problem of the convergence to equilibrium for kinetic equations. The idea of the approach is to prove a 'weak' coercive estimate, which implies exponential or polynomial convergence rate. Our method works very well not only for hypocoercive systems in which the coercive parts are degenerate but also for the linearized Boltzmann equation.
\end{abstract}
{\bf Keyword}
{Goldstein-Taylor model, Boltzmann equation, hard potential, soft potential, rate of convergence to equilibrium. \\{\bf MSC:} {76P05, 82B40, 82C40, 82D05.}
\section{Introduction}
In \cite{DesvillettesVillani:2001:OTG} and \cite{DesvillettesVillani:2005:OTG}, L. Desvillettes and C. Villani started the program about the trend to equilibrium for kinetic equations. Up to now, there are three classes of techniques to study the convergence to equilibrium. The first class of technique is the Lyapunov functional technique, which works for nonlinear equations. These techniques are developed in \cite{CaceresCarrilloGoudon:2003:ERL}, \cite{DesvillettesVillani:2001:OTG}, \cite{DesvillettesVillani:2005:OTG}, \cite{Desvillettes:2006:HTE}, \cite{FellnerNeumannSchmeiser:2004:CGE}. The second class of techniques is the pseudodifferential calculus, which works for linear hypoelliptic equations, developed in \cite{Herau:2006:HET}, \cite{EckmannHairer:2003:SPH}, \cite{HelfferNier:2005:HES}, \cite{HerauNier:2004:IHT}, \cite{Villani:2006:HDO}. The third class of techniques is developed by Yan Guo in \cite{Guo:2002:LPB}, which is in some sense an intermediate method between the two previous ones, which works for nonlinear kinetic equations in a close-to-equilibrium regime or the linearized versions of nonlinear kinetic equations. For a full discussion on this, we refer to the note \cite{Villani:2009:H}.
\\ Using the techniques developed in \cite{DesvillettesVillani:2001:OTG}, \cite{DesvillettesVillani:2005:OTG}, L. Desvillettes and F. Salvarani have investigated the speed of relaxation to equilibrium in the case of linear collisional models where the collision frequency is not uniformly bounded away from $0$. The two models that they considered are the non-homogeneous transport equation and the Goldstein-Taylor model
\begin{equation}
\label{DS-Intro}
\frac{\partial f}{\partial t}+v.\nabla f=\sigma(x)(\bar{f}-f),
\end{equation}
and 
\begin{equation}\label{G-TIntro}
\left\{ \begin{array}{ll} \frac{\partial u}{\partial t}+\frac{\partial u}{\partial x}=\sigma(x)(v-u),\vspace{.1in}\\
\frac{\partial v}{\partial t}-\frac{\partial v}{\partial x}=\sigma(x)(u-v).\end{array}\right.
\end{equation}
They prove that when $\sigma$ is greater than a positive polynomial and $\sigma$ belongs to $H^2$, one can get polynomial decays of the solutions toward the equilibrium points. However, the techniques used in the paper could not be extended to consider the case where the cross section $\sigma$ is $0$ on a set of strictly positive measure. A conjecture in this paper is to find explicit decay rates for these systems in wider classes of $\sigma$. In the same spirit of \cite{DesvillettesSalvarani:2009:ABD}, K. Aoki and F. Golse \cite{AokiGolse:2011:OSA} have studied the case of a collisionless gas enclosed in a vessel, where the surface is kept at a constant temperature, and they have investigated the convergence to equilibrium for such a system.
\\ We introduce a new approach to the problem of convergence toward equilibrium in the kinetic theory and resolve the conjecture by L. Desvillettes and F. Salvarani in \cite{DesvillettesSalvarani:2009:ABD} for Goldstein-Taylor and related models. We can relax the regularity property of $\sigma$ as well as the condition that $\sigma$ is greater than a positive polynomial and prove that the decay is exponential (see Theorems $\ref{TheoremTransportexponentialdecay1}$, $\ref{TheoremTransportexponentialdecay2}$). The main idea of our techniques is similar to the work of Haraux \cite{Haraux:1989:RSC}: in order to prove an exponential decay for the solution of the equation
\begin{equation}\label{Introe1}
\left\{ \begin{array}{ll} \frac{\partial f}{\partial t}+\mathcal{A}(f)=-\mathcal{K}(f), t\in\mathbb{R}_+,\vspace{.1in}\\
f(0)=f_0, \end{array}\right.
\end{equation}
we can study the following homogeneous equation with the same initial condition
\begin{equation}\label{Introe2}
\left\{ \begin{array}{ll} \frac{\partial g}{\partial t}+\mathcal{A}(g)=0, t\in\mathbb{R}_+,\vspace{.1in}\\
g(0)=f_0,\end{array}\right.
\end{equation}
and prove that the following observability inequality holds 
\begin{equation}\label{Introe3}\int_0^T<\mathcal{K}(g),g>dt~~~\geq C\|f_0\|^2.\end{equation}
A natural way of proving the exponential decay for the solutions of $(\ref{Introe1})$ is to prove that $\mathcal{K}$ is coercive 
$$<\mathcal{K}(g),g>\geq C\|g\|^2,$$
however this is not always true, especially in the case of Goldstein-Taylor and related models. The task of proving of the observability inequality $(\ref{Introe3})$ turns out to be much easier than 
proving an exponential decay for solutions of $(\ref{Introe1})$ since the solutions of $(\ref{Introe2})$ are explicit. Inequality $(\ref{Introe3})$ could be considered as a 'weak' coercive inequality. The details of this technique will be explained in section 3 (see Lemmas $\ref{Lemma0}$, $\ref{Lemma1}$, $\ref{Lemma3}$, $\ref{Lemma2}$ and $\ref{LemmaAmmari}$). 
\\ Consider the dissipative inequality for $(\ref{DS-Intro})$
$$\partial_t\|f\|_{L^2}^2=-\int_{\mathbb{T}^d\times\mathbb{R}^d}\sigma(x)|\bar{f}-f|^2dxdv,$$
we can see that the damping $\int_{\mathbb{T}^d\times\mathbb{R}^d}\sigma(x)|\bar{f}-f|^2dxdv$ is too strong to lead to a polynomial decay. A reasonable question is if we can get a polynomial decay with a weaker damping. We  give an example where the damping is quite weak 
$$\partial_t\|f\|_{L^2}^2=-\int_{\mathbb{T}^d\times\mathbb{R}^d}|(1-\Delta_x)^{-\epsilon/2}\sigma(x)(\bar{f}-f)|^2dxdv,$$
where $\epsilon$ is a positive constant. Since the order of the pseudo-differential operator $(1-\Delta_x)^{-\epsilon/2}$ is $-\epsilon$, it leads to a polynomial decay and this is the result of Theorem $\ref{TheoremTransportexponentialdecay3}$. 
\\ Another question is that: our method works well for kinetic models of collisionless particles, could it be applied to more sophisticated models? The answer is yes. We also succeed to apply our technique to study the convergence toward equilibrium  for the linearized Boltzmann equation (see Theorem $\ref{TheoremLinearizedBoltzmannDecay}$). In the context of the linearized Boltzmann equation, the main tool to prove the exponential and polynomial convergence toward the equilibrium is based on the spectral gap estimate for the hard potential case and the coercivity estimate for the soft potential case. Using this technique, C. Mouhot has proved exponential decays in the case of hard potential (see  \cite{BarangerMouhot:2005:ESG}, \cite{Mouhot:2006:ECE}, \cite{MouhotNeumann:2006:QPS}, \cite{MouhotStrain:2007:SGC}). For the soft potential case, R. Strain and Y. Guo have proved results about the almost exponential decay (which means that the convergence is faster than any polynomial convergence) in \cite{StrainGuo:2006:AED} , or some exponential decay of the type $\exp(-t^p)$ , $(p<1)$ in \cite{StrainGuo:2008:EDS}. However, obtaining spectral gap and coercivity estimates is sometimes very hard. Using our tools, we can prove an exponential decay for the hard potential  case and an almost exponential decay for the soft potential case. Since we do not need the coercivity of the collision operator, we do not really need assumptions on the collision kernel $B(|v-v_*|,\cos\theta)$ including the smoothness, convexity,... The linearized Boltzmann collision operator is usually split into two parts
$$L[f]=\nu(v)f-Kf,$$
where $\nu(v)f$ is the dominant part. If $K$ is good enough, the spectrum of $L$ is included in the spectrum of $\nu(v)$, which leads to the coercivity of $L$. Our idea is to consider the 'weak' coercivity of $L$ for only a small class of functions: the solutions of $(\ref{Introe2})$. For a solution $g$ of $(\ref{Introe2})$, the integral $\int_0^TL(g)dt$ is equivalent to $T\nu(v)g-C(T)Kg$ in some sense, where $C(T)<<T$. This means that $C(T)Kg$ is absorbed by $T\nu(v)g$ when $T$ is large and we still have the 'weak' coercivity  of $L$ without assuming more conditions on $K$. The only assumption  we need is that the usual dominant part in the linearized Boltzmann collision kernel remains dominant with our very general conditions (see assumptions $(\ref{B1})$, $(\ref{B3})$). These assumptions is the least property that we could expect from the linearized Boltzmann collision operator and they cover both cases: with and without Grad cut-off assumptions. Similar as in the case of the Goldstein-Taylor and related models, our proof remains true if the collision kernel $B(|v-v_*|,\cos\theta)$ depends on the space variable, which means that the effect of the collision of particles depends also on the position where they collide; however, we have not found any real model for this. 
\\ The plan of the paper is the following: the main results of the paper is stated in Section 2 and the main tool of the proofs is studied in Section 3. Sections 3, 4, 5, 6 are devoted to the proofs of Theorems $\ref{TheoremTransportexponentialdecay1}$, $\ref{TheoremTransportexponentialdecay2}$, $\ref{TheoremTransportexponentialdecay3}$ and $\ref{TheoremLinearizedBoltzmannDecay}$.
\section{Preliminaries and Statements of the Main Results}
\subsection{Stabilization of the Goldstein-Taylor equation and related models}
We consider the Goldstein-Taylor model  
\begin{equation}\label{G-T}
\left\{ \begin{array}{ll} \frac{\partial u}{\partial t}+\frac{\partial u}{\partial x}=\sigma(x)(v-u),\vspace{.1in}\\
\frac{\partial v}{\partial t}-\frac{\partial v}{\partial x}=\sigma(x)(u-v),\end{array}\right.
\end{equation}
where $u:=u(t,x)$, $v:=v(t,x)$, $x\in\mathbb{T}=\mathbb{R}/\mathbb{Z}$, $t\geq 0$, with the initial condition 
\begin{equation}\label{G-Tinitial}
u(0,x)=u_0(x), ~~~ v(0,x)=v_0(x).
\end{equation}
Suppose that $\sigma\in L^2(\mathbb{T})$. Define the asymptotic profile of the system $(\ref{G-T})$:
\begin{equation}\label{AsymProfileu}
(u_\infty,v_\infty)=\left(\frac{1}{2}\int_{\mathbb{T}}(u_0+v_0)dx,\frac{1}{2}\int_{\mathbb{T}}(u_0+v_0)dx\right),
\end{equation}
and the energy is then
\begin{equation}\label{Energyu}
H_u(t)=\int_{\mathbb{T}}[(u-u_\infty)^2+(v-v_\infty)^2]dx.
\end{equation}
\\ We also consider the following non-homogeneous (in space) transport equation
\begin{equation}
\label{DS-transport}
\frac{\partial f}{\partial t}+v.\nabla f=\sigma(x)(\bar{f}-f),
\end{equation}
where $f:=f(t,x,v)$ is the density of particles at time $t$, position $x$ and velocity $v$. The notation $\bar{f}$ is $\int_{V}f(t,x,v)$, where $V$ is a bounded set of $\mathbb{R}^d$ of measure $1$. The solutions are considered of periodic $1$ or on $\mathbb{T}^d=\mathbb{R}^d/\mathbb{Z}^d$. 
We give an example where the damping is week enough to give a polynomial decay
\begin{equation}
\label{DS-transportspecial}
\frac{\partial f}{\partial t}+v.\nabla f=\sigma(x)(1-\Delta_x)^{-\frac{\epsilon}{2}}\sigma(x)(\bar{f}-f),
\end{equation}
where $\epsilon$ is a positive constant. The initial data is
\begin{equation}
\label{DS-initial}
f(0,x,v)=f_0(x,v).
\end{equation}
Define the energy of $(\ref{DS-transportspecial})$
\begin{equation}
\label{Energyf}
E_f(t)=\int_{\mathbb{T}^d}\int_V|f-f_\infty|^2dvdx,
\end{equation}
where 
\begin{equation}
\label{AsymProfilef}
f_\infty=\int_{\mathbb{T}^d}\int_Vf_0(x,v)dxdv.
\end{equation}
Our main results are
\begin{theorem}\label{TheoremTransportexponentialdecay1}
 When $\sigma\geq0$, $\sigma\in L^2(\mathbb{T}^d)$, $\sigma\ne0$, $f_0\in L^2(\mathbb{T}^d\times\mathbb{R}^d)$, the solution of the equation $(\ref{G-T})$ decays exponentially in time towards the equilibrium state of the equation. 
\end{theorem}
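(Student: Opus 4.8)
The plan is to put \eqref{G-T} into the abstract form \eqref{Introe1} and to establish the observability inequality \eqref{Introe3} for $T=1$; the exponential decay of $H_u(t)$ is then a consequence of the abstract lemmas of Section~3. Since $\int_{\mathbb T}(u+v)\,dx$ is conserved along \eqref{G-T}, after subtracting $(u_\infty,v_\infty)$ it suffices to work in the closed subspace $X_0=\{(u,v)\in L^2(\mathbb T)^2:\int_{\mathbb T}(u+v)\,dx=0\}$, on which $H_u(t)=\|u\|_{L^2}^2+\|v\|_{L^2}^2$. Write $\mathcal A(u,v)=(\partial_x u,-\partial_x v)$ and $\mathcal K(u,v)=\sigma(x)(u-v)(1,-1)$; then $\mathcal A$ is skew-adjoint and leaves $X_0$ invariant, $\langle\mathcal K(u,v),(u,v)\rangle=\int_{\mathbb T}\sigma(x)(u-v)^2\,dx\ge0$, and \eqref{G-T} is precisely \eqref{Introe1}. (Well-posedness in $X_0$ is standard: on $\mathbb T$ one has $H^1\hookrightarrow L^\infty$, so $\sigma\in L^2(\mathbb T)$ makes $\mathcal A+\mathcal K$ $m$-dissipative on $X_0$ with domain $H^1(\mathbb T)^2\cap X_0$.)

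The advantage of this reformulation is that the free equation \eqref{Introe2} is solved explicitly by $g(t)=\bigl(u_0(\cdot-t),\,v_0(\cdot+t)\bigr)$, which is $1$-periodic in $t$. Integrating first in $t$ — the diagonal terms reproduce $\|u_0\|_{L^2}^2$ and $\|v_0\|_{L^2}^2$ by periodicity, and the substitution $s=x+t$ turns the cross term into the convolution $(u_0*v_0)(2x)$ — and then expanding in Fourier series, one obtains the closed form
\begin{equation*}
\int_0^1\langle\mathcal K(g),g\rangle\,dt \;=\; \bar\sigma\sum_{k\in\mathbb Z}\bigl(|\hat u_0(k)|^2+|\hat v_0(k)|^2\bigr)\;-\;2\sum_{k\in\mathbb Z}\hat u_0(k)\,\hat v_0(k)\,\hat\sigma(-2k),
\end{equation*}
where $\hat\sigma(m)=\int_{\mathbb T}\sigma(x)e^{-2\pi imx}\,dx$ and $\bar\sigma=\hat\sigma(0)=\int_{\mathbb T}\sigma>0$ strictly, since $\sigma\ge0$, $\sigma\not\equiv0$.

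The heart of the argument is to upgrade this identity to the coercive bound $\int_0^1\langle\mathcal K(g),g\rangle\,dt\ge C\,(\|u_0\|_{L^2}^2+\|v_0\|_{L^2}^2)$. I would treat the zero mode separately: on $X_0$ one has $\hat v_0(0)=-\hat u_0(0)$, so the $k=0$ contribution equals $4\bar\sigma|\hat u_0(0)|^2=2\bar\sigma\bigl(|\hat u_0(0)|^2+|\hat v_0(0)|^2\bigr)$, already coercive in that mode. For $k\neq0$ the index $2k$ is nonzero, and here one uses the strict inequality $|\hat\sigma(m)|<\bar\sigma$ for every $m\neq0$ — valid because $\{\sigma>0\}$ has positive measure, so $\sigma(x)e^{-2\pi imx}$ cannot have constant phase there — together with Riemann--Lebesgue ($\sigma\in L^1$), which forces $\delta:=\bar\sigma-\sup_{m\neq0}|\hat\sigma(m)|>0$. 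Then $2\,|\hat u_0(k)\hat v_0(k)\hat\sigma(-2k)|\le(\bar\sigma-\delta)\bigl(|\hat u_0(k)|^2+|\hat v_0(k)|^2\bigr)$ for each $k\neq0$, and summing gives
\begin{equation*}
\int_0^1\langle\mathcal K(g),g\rangle\,dt \;\ge\; 2\bar\sigma\bigl(|\hat u_0(0)|^2+|\hat v_0(0)|^2\bigr)+\delta\sum_{k\neq0}\bigl(|\hat u_0(k)|^2+|\hat v_0(k)|^2\bigr)\;\ge\;\min(2\bar\sigma,\delta)\,\bigl(\|u_0\|_{L^2}^2+\|v_0\|_{L^2}^2\bigr).
\end{equation*}
This is exactly \eqref{Introe3} with $C=\min(2\bar\sigma,\delta)$, and the abstract observability-to-decay lemma of Section~3 then yields $H_u(t)\le C'e^{-\lambda t}H_u(0)$.

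The coercivity step is the only delicate point: the naive Cauchy--Schwarz estimate of the cross term against the diagonal term only gives $\int_0^1\langle\mathcal K g,g\rangle\,dt\ge0$, so one must genuinely use both the exact value of the zero mode and the spectral gap $\delta>0$ of $\hat\sigma$ away from the origin. A softer, purely qualitative alternative — useful as a consistency check but not yielding a rate — is a contradiction argument: if $\int_0^1\langle\mathcal K g,g\rangle\,dt=0$ then $u_0(x-t)=v_0(x+t)$ for a.e.\ $(t,x)$ with $x\in\{\sigma>0\}$; letting $t$ range shows $v_0$ admits every period in $2(\{\sigma>0\}-\{\sigma>0\})$, which contains an interval around $0$ by the Steinhaus theorem, hence $v_0$ (and then $u_0$) is constant, i.e.\ $f_0=0$ in $X_0$.
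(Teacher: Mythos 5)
Your proposal is correct and follows the same skeleton as the paper — reduce Theorem \ref{TheoremTransportexponentialdecay1} to an observability inequality of the form (\ref{Introe3}) for the free transport flow (Proposition \ref{TheoremExponentialDecays}, built on Lemmas \ref{Lemma0}, \ref{Lemma1}, \ref{Lemma2}), then verify that inequality by expanding the explicit solution $(\varphi_0(x-t),\phi_0(x+t))$ in Fourier series, as in Proposition \ref{ProproEnergy-DampingwithweightH1} — but the key computational step is genuinely different, and in fact more careful than the paper's. The paper integrates over an integer time window and asserts that \emph{all} cross-frequency terms vanish, arriving at the exact identity (\ref{GTProof1}) with constant $T\int_{\mathbb T}\sigma$. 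In reality the resonant pairs (frequency $k$ of $\varphi$ against frequency $-k$ of $\phi$, i.e.\ $m=-n$ in the paper's double sum) survive the time integration and couple to the Fourier coefficient of $\sigma$ at frequency $2k$: these are precisely the terms $\hat u_0(k)\hat v_0(k)\hat\sigma(-2k)$ you keep. Your treatment — exact evaluation of the zero mode on the mean-zero subspace, and absorption of the $k\neq0$ resonant terms via the strict gap $\delta=\bar\sigma-\sup_{m\neq0}|\hat\sigma(m)|>0$ (which indeed follows from $\sigma\ge0$, $\sigma\not\equiv0$ and Riemann--Lebesgue, since each $|\hat\sigma(m)|<\bar\sigma$ strictly and $\hat\sigma(m)\to0$) — is what the argument actually requires, so your route should be viewed as a corrected and sharpened version of Proposition \ref{ProproEnergy-DampingwithweightH1}; the price is that the observability constant becomes $\delta$, which depends on the full Fourier profile of $\sigma$ rather than only on $\int_{\mathbb T}\sigma$. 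One caveat you share with the paper rather than resolve: the transfer from the free-flow observability to decay of the damped flow goes through Lemma \ref{Lemma1} (hence Lemma \ref{Lemma2}), whose hypothesis that $\mathcal K^{1/2}$ be bounded on $L^2$ amounts to $\sigma\in L^\infty$; for $\sigma\in L^2\setminus L^\infty$ this step needs an additional argument (e.g.\ a truncation of $\sigma$, which is compatible with your spectral-gap estimate since truncations preserve $\sigma\ge0$, $\sigma\not\equiv0$), and your appeal to ``the abstract lemmas of Section 3'' glosses over it exactly as the paper does.
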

\begin{theorem}\label{TheoremTransportexponentialdecay2}
 When $\sigma\geq0$, $\sigma\in H^1(\mathbb{T}^d)$, $\sigma\ne0$, $f_0\in L^2(\mathbb{T}^d\times\mathbb{R}^d)\cap L^\infty(\mathbb{T}^d\times\mathbb{R}^d)$, the solution of the equation $(\ref{DS-transport})$ decays exponentially in time towards the equilibrium state of the equation. 
\end{theorem}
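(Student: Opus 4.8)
The plan is to apply the abstract observability machinery announced in the introduction (Lemmas \ref{Lemma0}--\ref{LemmaAmmari}) with $\mathcal{A}(f) = v\cdot\nabla f$ the free transport operator and $\mathcal{K}(f) = \sigma(x)(f - \bar f)$ the damping. Writing $f_0$ for the initial datum (after subtracting the equilibrium $f_\infty$, so that $\bar{f_0}$ has zero spatial average in the relevant sense), exponential decay of $E_f(t)$ follows once we establish the observability inequality
\begin{equation}\label{obs-plan}
\int_0^T \int_{\mathbb{T}^d}\int_V \sigma(x)\,|g - \bar g|^2 \, dv\, dx\, dt \;\geq\; C\,\|f_0\|_{L^2(\mathbb{T}^d\times V)}^2,
\end{equation}
where $g(t,x,v) = f_0(x - tv, v)$ is the solution of the free transport equation $\partial_t g + v\cdot\nabla g = 0$ with datum $f_0$. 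The left side of \eqref{obs-plan} is completely explicit because $g$ is given by the characteristics, so the whole difficulty is concentrated in proving \eqref{obs-plan}, and then in checking that the perturbation argument of Section 3 indeed upgrades it to an exponential rate for the true solution $f$ of \eqref{DS-transport}.

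First I would reduce \eqref{obs-plan} to a statement purely about the averaged quantity. Since $\sigma \not\equiv 0$ and $\sigma \in H^1(\mathbb{T}^d) \subset C(\mathbb{T}^d)$ (in $d=1$; in higher dimensions one uses that $H^1$ functions are still "spread out" enough), there is a ball $B(x_0,r)$ and a constant $\sigma_0 > 0$ with $\sigma \geq \sigma_0$ on $B(x_0,r)$; thus it suffices to bound $\int_0^T\int_{B(x_0,r)}\int_V |g - \bar g|^2\,dv\,dx\,dt$ from below by $C\|f_0\|^2$. The natural route is a velocity-averaging / unique-continuation argument for transport: if this integral were small, then $g(t,x,\cdot)$ would be close to its $V$-average for $(t,x)$ in a set of positive measure, and transporting this information along characteristics — every point of $\mathbb{T}^d$ is reached from $B(x_0,r)$ in time $O(1)$ by trajectories with velocities ranging over all of $V$ — forces $f_0(x,v)$ to be essentially independent of $v$, i.e. $f_0 = \bar{f_0}$; combined with the conservation and the fact that $\bar{f_0}$ itself evolves by a transport-with-averaging dynamics that relaxes, one concludes $f_0 = f_\infty$, contradicting $\|f_0\| = 1$ in the normalized problem. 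Making this quantitative (rather than merely a contradiction) is where the $L^\infty$ hypothesis on $f_0$ and the $H^1$ regularity of $\sigma$ enter, presumably to control commutators of $\sigma$ with the averaging operator and to justify the velocity-averaging smoothing estimates uniformly.

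The main obstacle I anticipate is precisely this quantitative velocity-averaging step: turning "the damping integral is small $\Rightarrow$ $f_0$ is close to equilibrium" into an inequality with an explicit constant $C$ independent of $f_0$. A clean way to organize it is to split $g - \bar g = (g - P g) + (Pg - \bar g)$ where $P$ is a suitable low-frequency (in $x$) projection, estimate the high-frequency part directly using that transport does not create cancellation there, and handle the low-frequency part by a compactness-plus-contradiction argument à la Lemma \ref{LemmaAmmari} that is then made effective because the limiting equation has no nontrivial invariant modes. Once \eqref{obs-plan} is in hand, the passage to exponential decay of $E_f$ is the soft, general part: one writes $f = g + h$ where $h$ solves the transport equation with source $-\mathcal{K}(f)$ and zero datum, estimates $\|h\|$ on $[0,T]$ by the damping via Duhamel and Cauchy--Schwarz, and combines with \eqref{obs-plan} and the (non-strict) dissipation identity $\partial_t \|f\|^2 = -\int \sigma |f - \bar f|^2 \le 0$ to get $E_f(T) \le (1-\delta) E_f(0)$ for some fixed $\delta \in (0,1)$ and a fixed $T$; iterating over $[nT,(n+1)T]$ yields the exponential rate. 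I would double-check that the $d$-dimensional geometry of characteristics on $\mathbb{T}^d$ with velocities in a measure-one set $V \subset \mathbb{R}^d$ genuinely gives the needed controllability in bounded time, since this is the geometric heart of the observability estimate.
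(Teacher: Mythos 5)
Your overall framework is the same as the paper's: reduce, via the abstract Lemmas \ref{Lemma0}--\ref{Lemma2} (Proposition \ref{PropositionTransportExponentialDecays}), to an observability inequality for the free transport flow $g(t,x,v)=f_0(x-vt,v)$, then iterate over time intervals. But the actual content of the theorem is the observability inequality itself, and there your argument has a genuine gap. First, your localization step is wrong in the stated generality: for $d\geq 2$ one has $H^1(\mathbb{T}^d)\not\subset C(\mathbb{T}^d)$, and a nonnegative, not identically zero $H^1$ function need not satisfy $\sigma\geq\sigma_0>0$ on any ball (one can build $\sigma\in H^1(\mathbb{T}^d)$, $\sigma>0$ a.e., whose essential infimum on every ball is $0$, e.g.\ $\sigma=1/(1+w)$ with $w\in H^1$ blowing up on a dense countable set). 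So the reduction to a ball where $\sigma$ is bounded below, on which your whole controllability-along-characteristics picture rests, is not available; the paper's proof never needs it and only uses $\int_{\mathbb{T}^d}\sigma\,dx>0$ together with the decay $|\int_{\mathbb{T}^d}\sigma e^{2\pi i(p-q)x}dx|\leq C\|\sigma\|_{H^1}/|p-q|$, which is precisely where the $H^1$ hypothesis enters quantitatively.

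Second, even granting a region where $\sigma$ is bounded below, your route to the observability inequality is a qualitative unique-continuation/compactness sketch, and you yourself flag the quantitative velocity-averaging step as the ``main obstacle'' without resolving it; a compactness-contradiction argument in $L^2$ runs straight into non-compactness from high spatial frequencies, which is exactly the regime one must control by hand. The paper does this explicitly: expanding $g=\sum_n a_n(v)e^{2\pi i n(x-vt)}$, the diagonal terms give at least $(T-1)\int_{\mathbb{T}^d}\sigma\,dx\,\sum_n\|a_n\|_{L^2(V)}^2$, while each off-diagonal pair $p\neq q$ is bounded by $C\|a_p\|_{L^2}\|a_q\|_{L^2}\bigl(|p||q|\bigr)^{-1/2}|p-q|^{-1}+C\|a_p\|_{L^2}\|a_q\|_{L^2}T^{3/4}|p-q|^{-5/4}$, the first factor coming from Fourier-transform (dispersion) estimates in $v$ that use the $L^\infty$ bound on $f_0$, the second from the $H^1$ decay of $\hat\sigma$; summing, the off-diagonal contribution grows like $T^{3/4}$ and is absorbed by the diagonal term of size $T$ for $T$ large. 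Your proposal identifies neither mechanism (how $H^1$ of $\sigma$ nor $L^\infty$ of $f_0$ are actually used), so the central inequality remains unproved; the soft Duhamel/iteration part you describe at the end is correct but is not where the difficulty lies.
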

\begin{remark} Compare to the results in \cite{DesvillettesSalvarani:2009:ABD}, our results not only improve the type of convergence from polynomial to exponential, but also relax the regularity on the initial condition and the cross section. Moreover, we do not need the condition that the cross section $\sigma$ is greater than a positive polynomial.  
\end{remark}
\begin{theorem}\label{TheoremTransportexponentialdecay3}
 When $\sigma\geq0$, $\sigma\in C^\infty(\mathbb{T}^d)$, $\sigma\ne0$,  $f_0\in C^\infty(\mathbb{T}^d\times\mathbb{R}^d)$, the solution of the equation $(\ref{DS-transportspecial})$ decays polymonially in the following sense $\forall M>0$, there exist positive constants $C(M)$ and $k>M$ such that
 \begin{equation}
 \label{mainresult}
H_f(t)\leq {C(M)}(t+1)^{-k}\|f_0-f_\infty\|^2_{H^{\epsilon}}.
\end{equation}
\end{theorem}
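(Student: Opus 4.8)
The plan is to follow the abstract strategy outlined in the introduction: reduce the polynomial decay for the damped equation $(\ref{DS-transportspecial})$ to an observability inequality for the free transport semigroup. First I would write $\mathcal{A} = v\cdot\nabla_x$ and $\mathcal{K}(f) = \sigma(x)(1-\Delta_x)^{-\epsilon/2}\sigma(x)(\bar f - f)$, so that $(\ref{DS-transportspecial})$ becomes $\partial_t f + \mathcal{A}(f) = -\mathcal{K}(f)$. The dissipation identity $\partial_t\|f-f_\infty\|_{L^2}^2 = -2\langle \mathcal{K}(f-f_\infty),\, f-f_\infty\rangle = -2\|(1-\Delta_x)^{-\epsilon/2}\sigma(\bar f-f)\|_{L^2}^2$ shows the energy is non-increasing; the goal is a quantitative rate. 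Following Lemmas $\ref{Lemma0}$--$\ref{LemmaAmmari}$ of Section 3, since $\mathcal{K}$ here is a bounded (order $-2\epsilon$) but not coercive operator, one expects a ``weak'' observability bound of the form $\int_0^T \langle \mathcal{K}(g), g\rangle\, dt \geq C_T \|g_0 - g_\infty\|_{H^{-\epsilon}}^2$ for solutions $g$ of the free transport equation $\partial_t g + v\cdot\nabla_x g = 0$, $g(0)=f_0$, which after interpolation against the conserved $H^\epsilon$-type norm yields the stated $(t+1)^{-k}$ rate with $k$ as large as we like once the initial datum is smooth enough (hence the $C^\infty$ hypotheses and the $\|f_0-f_\infty\|_{H^\epsilon}^2$ on the right of $(\ref{mainresult})$).

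The heart of the matter is thus the observability inequality for free transport. Here I would exploit that solutions of $\partial_t g + v\cdot\nabla_x g = 0$ are explicit, $g(t,x,v) = f_0(x-vt, v)$, so that the quantity $\int_0^T \|(1-\Delta_x)^{-\epsilon/2}\,\sigma(x)\,(\bar g(t,x)-g(t,x,v))\|_{L^2_{x,v}}^2\,dt$ can be analyzed in Fourier series in $x\in\mathbb{T}^d$. The mechanism is the same as in Theorems $\ref{TheoremTransportexponentialdecay1}$ and $\ref{TheoremTransportexponentialdecay2}$: averaging in $v$ together with the time integral of the oscillatory factors $e^{2\pi i k\cdot v t}$ over $[0,T]$ forces each nonzero Fourier mode of $g$ to register in the damping term, while the zero mode is handled by the condition $\sigma\not\equiv 0$; the extra weight $(1-\Delta_x)^{-\epsilon/2}$ simply inserts a factor $(1+4\pi^2|k|^2)^{-\epsilon}$ on the $k$-th mode, which is exactly why one must measure $g_0-g_\infty$ in $H^{-\epsilon}$ rather than $L^2$. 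I would carry out: (i) the Fourier/plancherel reduction mode by mode; (ii) the elementary lower bound on $\int_0^T$ of the transport oscillation (as in the earlier theorems, using that $\sigma\in L^2$ has a nonzero Fourier coefficient); (iii) summation over $k$ with the $(1+|k|^2)^{-\epsilon}$ weight to obtain $\int_0^T\langle\mathcal{K}(g),g\rangle\,dt \geq C_T\|f_0-f_\infty\|_{H^{-\epsilon}}^2$.

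To convert the observability inequality into the polynomial rate I would invoke the abstract comparison lemma of Section 3 (the analogue of Haraux's argument, Lemma $\ref{LemmaAmmari}$): comparing the damped solution $f$ with the free solution $g$ sharing the same data on each time window $[nT,(n+1)T]$, the observability estimate gives $E_f((n+1)T) \leq E_f(nT) - C_T\|f(nT,\cdot)-f_\infty\|_{H^{-\epsilon}}^2$. Because the damping norm is weaker than the energy norm (a loss of $2\epsilon$ derivatives), this does not close into a geometric decay; instead, using that the higher Sobolev norm $\|f(t)-f_\infty\|_{H^\epsilon}$ is controlled uniformly in $t$ (propagated from the $C^\infty$ data by the regularity of $v\cdot\nabla_x$ and of the bounded operator $\mathcal{K}$), I interpolate $\|h\|_{L^2}^2 \leq \|h\|_{H^{-\epsilon}}^{2\theta}\|h\|_{H^{\epsilon}}^{2(1-\theta)}$ with $\theta=1/2$, turning the recursion into $a_{n+1}\le a_n - c\,a_n^{1+\eta}$ for a suitable $\eta>0$ fixed by $\epsilon$ and the available number of derivatives; this yields $a_n \lesssim n^{-1/\eta}$, and by taking the data in a high enough Sobolev space one makes $1/\eta$ exceed any prescribed $M$, giving $(\ref{mainresult})$. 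The main obstacle I anticipate is step (ii)--(iii): getting a clean, $k$-uniform lower bound for the time-integrated transport oscillation against a merely $L^2$ cross section $\sigma$, and making sure the constant $C_T$ and the window length $T$ interact correctly with the weight $(1+|k|^2)^{-\epsilon}$ so that the summed inequality is genuinely a lower bound by the full $H^{-\epsilon}$ norm and not just by a single mode.
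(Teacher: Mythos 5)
Your overall route coincides with the paper's: the same abstract reduction (Lemmas $\ref{Lemma0}$, $\ref{Lemma1}$), the same Fourier-series observability inequality for the free transport flow with an $H^{-\epsilon}$-type weight on each mode (Proposition $\ref{3ProproTransportEnergy-Dampingwithweight}$), and the same windowed recursion closed by Lemma $\ref{LemmaAmmari}$. The problem is in your last stage, and it is quantitative rather than structural. First, your claim that $\|f(t)-f_\infty\|_{H^{\epsilon}}$ is controlled uniformly in time is not justified: the damping $\sigma(1-\Delta_x)^{-\epsilon/2}\sigma(\bar f-f)$ has no sign in the $H^{\epsilon}$ inner product, and testing the equation with $(1-\Delta_x)^{\epsilon}f$ only gives $\partial_t\|(1-\Delta_x)^{\epsilon/2}f\|_{L^2}^2\leq C\|f\|_{L^2}^2$, i.e.\ linear growth. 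This is exactly what the paper proves in Step 1 of Section 6, and it then compensates by passing to the rescaled sequence $\mathcal{E}_l=H_f(lT)/(lTC)$ before invoking Lemma $\ref{LemmaAmmari}$; a polynomially growing high norm is harmless because the final exponent is arbitrarily large, so you should weaken your claim to a growth bound rather than assert uniform boundedness.

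Second, and more seriously, the interpolation as you wrote it cannot produce the conclusion ``for every $M$ there is $k>M$''. Using $\|h\|_{L^2}^2\leq\|h\|_{H^{-\epsilon}}\|h\|_{H^{\epsilon}}$ with the fixed exponent $\theta=1/2$ turns the windowed dissipation inequality into $a_{n+1}\leq a_n-c\,a_n^{2}/B_n$; if $B_n$ were uniformly bounded this gives only $a_n\lesssim n^{-1}$ (one fixed rate), and with the actual linear growth $B_n\lesssim n$ it gives essentially no rate at all. To reach an arbitrarily large $k$ you must interpolate $\|h\|_{L^2}\leq\|h\|_{H^{-\epsilon}}^{\theta}\|h\|_{H^{s}}^{1-\theta}$ with $s$ arbitrarily large and $\theta=s/(s+\epsilon)\to1$, so that the defect exponent $\eta\sim\epsilon/s$ tends to $0$, and you must propagate $H^{s}_x$ bounds (with at most polynomial growth in $t$, proved as in Step 1) for every $s$; this is precisely where the hypotheses $\sigma,f_0\in C^{\infty}$ enter. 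Your closing remark about ``taking the data in a high enough Sobolev space'' gestures at this, but it contradicts the fixed $\theta=1/2$ and the claim that only the $H^{\epsilon}$ norm is available, and none of the required bookkeeping (which norm is propagated, its growth in time, how $\eta$ depends on $s$ and $\epsilon$, and how the growth factor is absorbed by the rescaling before Lemma $\ref{LemmaAmmari}$) is carried out. To be fair, the paper's own Step 2, with its constraints $k_2<\epsilon$ and its final ``let $k_1/k_2\to\infty$'', is loose on exactly the same point, but the mechanism it relies on is the one just described, and your proposal needs it spelled out to close the proof.
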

\begin{remark} The existence of a solution of this equation can be proved by a Picard iteration technique; however, we do not go into details of this classical proof.
\end{remark}
\begin{remark}
Since the order of the pseudo-differential operator $(1-\Delta_x)^{-\epsilon/2}$ is $-\epsilon$  in $(\ref{DS-transportspecial})$, which means that  the damping is quite weak, we get a polynomial decay. According to our theorem the order of the convergence is ${-\infty}$, or we can get an almost exponential decay with this damping.
\end{remark}
\subsection{Stabilization of the linearized Boltzmann equation}
The Boltzmann equation describes the behavior of a dilute gas when the interactions are binary (see \cite{CercignaniIllnerPulvirenti:1994:MTD}, \cite{Glassey:1996:CPK}, \cite{Villani:2002:RMT})
\begin{equation}\label{Boltzmann}
\partial_t F+v.\nabla_x F=Q(F,F), t\geq 0, x\in \mathbb{T}^d, v\in\mathbb{R}^d.
\end{equation}
In $(\ref{Boltzmann})$, $Q$ is the quadratic Boltzmann collision operator, defined by
$$Q(F,F)=\int_{S^{N-1}}\int_{\mathbb{R}^N}(F'F'_*-FF_*)B(|v-v_*|,\cos\theta)d\sigma dv_*,$$
where $F=F(t,x,v)$, $F_*=F(t,x,v_*)$, $F'_*(t,x,v'_*)$, $F'=F(t,x,v')$ in which
$$v'=\frac{v+v_*}{2}+\frac{|v-v_*|}{2}\sigma;v'_*=\frac{v+v_*}{2}-\frac{|v-v_*|}{2}\sigma,\sigma\in\mathbb{S}^{N-1}.$$
This is the so called "$\sigma$-representation" of the Boltzmann collision operator. Up to a Jacobian factor $2^{N-2}\sin^{N-2}(\theta/2)$, where $\cos\theta=(v'_*-v').(v_*-v)/|v_*-v|^2$, one can also define the alternative "$\omega$-representation",
$$Q(F,F)=\int_{S^{N-1}}\int_{\mathbb{R}^N}(F'F'_*-FF_*)\mathcal{B}(v-v_*,\omega) dv_*d\omega,$$
with 
$$v'=v+((v_*-v).\omega)\omega,v'_*=v_*-((v_*-v).\omega)\omega,\omega\in\mathbb{S}^{d-1},$$
and
$$\mathcal{B}(v-v_*,\omega)=2^{N-2}\sin^{N-2}(\theta/2)B(|v-v_*|,\cos\theta).$$
\\ The equilibrium distribution is given by the Maxwellian distribution
\begin{equation}\label{BoltzmannMaxelllian}
M(\rho,u,T)(v)=\frac{\rho}{(2\pi T)^{\frac{N}{2}}}\exp\left(-\frac{|u-v|^2}{2T}\right),
\end{equation}
where $\rho$, $u$, $T$ are the density, mean velocity and temperature of the gas at the point $x$
\begin{equation}
\rho=\int_{\mathbb{R}^d}f(v)dv,u=\frac{1}{\rho}\int_{\mathbb{R}^d}vf(v)dv,T=\frac{1}{N\rho}\int_{\mathbb{R}^d}|u-v|^2f(v)dv.
\end{equation}
Denote by 
$$\mu(v)=(2\pi)^{-d/2}\exp(-|v|^2/2),$$ 
the normalized unique equilibrium with mass $1$, momentum $0$ and temperature $1$, we consider $F$ to be a solution of the equation near $\mu$. Put $F=\mu+\sqrt{\mu}f$, then
\begin{equation}\label{BoltzmannNearEquilibrium}
\partial_t f+v.\nabla_x f=2\mu^{-1/2}Q(\mu,\sqrt{\mu}f)+\mu^{-1/2}Q(\sqrt{\mu}f,\sqrt{\mu}f).
\end{equation}
Define $$\Gamma(f,f)=\mu^{-1/2}Q(\sqrt{\mu}f,\sqrt{\mu}f),$$
and $$L[f]=2\mu^{-1/2}Q(\mu,\sqrt{\mu}f),$$
 the following equation is the linearized Boltzmann equation
\begin{equation}\label{BoltzmannLinearized}
\partial_t f+v.\nabla_x f = L[f],
\end{equation}
where $L[f]=$
\begin{equation}\label{BoltzmannLinearizedCollision}
\footnotesize{\int_{\mathbb{R}^N\times\mathbb{S}^{N-1}}2\mathcal{B}\mu^{1/2}(v)\mu(v_*)[\mu^{1/2}(v')f(v'_*)+\mu^{1/2}(v_*')f(v')-\mu^{1/2}(v_*)f(v)-\mu^{1/2}(v)f(v_*)]dv_*d\sigma.}
\end{equation}
We assume the following conditions on the collision kernel $\mathcal{B}$
\\ $(\mathbb{B}_1)$ There exist a constant $\alpha>-d+1$ and a positive constant $M_1$ such that 
\begin{equation}\label{B1}
\int_{\mathbb{R}^d\times\mathbb{S}^{d-1}}\mu(v_*)\mathcal{B}(|v-v_*|,\omega)d\omega dv_*\geq M_1(|v|+1)^\alpha.
\end{equation}
$(\mathbb{B}_2)$ There exist constants $1-d<\beta\leq\alpha+2/3$, and $M_2>0$ such that 
\begin{equation}\label{B3}
\mathcal{B}(|v-v_*|,\omega)\leq M_2|v-v_*|^\beta|v'-v|^{d-2}.
\end{equation}
We impose these conditions to assure that the term
\begin{eqnarray*}
\int_{\mathbb{R}^d\times\mathbb{S}^{d-1}}\mathcal{B}(|v-v_*|,\omega)\mu(v_*)f(v)dv_*d\sigma
\end{eqnarray*}
is the dominant term in the linearized Boltzmann collision operator. These assumptions cover both cases: with and without Grad cut-off.
\\ Consider the energy of $f$
\begin{equation}\label{BoltzmannEnergy}
H_f(t)=\int_{\mathbb{T}^d\times\mathbb{R}^d}|f|^2dxdv,
\end{equation}
and its derivative in time
\begin{eqnarray}
& &\frac{d}{dt}H_f(t)\\\nonumber
&=&-\frac{1}{2}\int_{\mathbb{T}^d\times\mathbb{R}^d\times\mathbb{R}^d\times\mathbb{S}^{d-1}}\mathcal{B}\mu_*\mu\times\\\nonumber
& &\times[f_*'\mu_*'^{-1/2}+f'\mu'^{-1/2}-f_*\mu^{-1/2}_*-f\mu^{-1/2}]^2d\sigma dv_*dvdx\\\nonumber
&\leq&0,
\end{eqnarray}
where we use the notation $f'_*=f(v'_*)$, $f'=f(v')$, $f_*=f(v_*)$, $f=f(v)$, $\mu'_*=\mu(v'_*)$, $\mu'=\mu(v')$, $\mu_*=\mu(v_*)$ and $\mu=\mu(v)$.
\\ For $\rho\in\mathbb{R}$, define
 $$L^2((|v|+1)^\rho):=\{f | (|v|+1)^\rho f\in L^2(\mathbb{T}^d\times\mathbb{R}^d)\}.$$ 
Denote by $\mathcal{S}(t)f_0$ the solution of the linearized Boltzmann equation and suppose that $f_0$ is orthogonal to the kernel of the linearized Boltzmann collision kernel: 
$$\int_{\mathbb{R}^d}\mu^{1/2}f_0dv=\int_{\mathbb{R}^d}\mu^{1/2}|v_i|f_0dv=\int_{\mathbb{R}^d}\mu^{1/2}|v|^2f_0dv=0,$$
for all $i\in\{1,\dots,d\}$. 
\begin{theorem}\label{TheoremLinearizedBoltzmannDecay} With the assumptions $(\mathbb{B}_1)$ and $(\mathbb{B}_2)$:
\begin{itemize}
\item The 'hard potential' case $\alpha,\beta> 0$: suppose that $f_0\in L^2(\mathbb{T}^d\times\mathbb{R}^d)$, there exist positive constants $M_0$, $\delta$ such that
\begin{equation}\label{LinearizedBoltzmannExDecay}
\left\|\mathcal{S}(t)\left(f_0-\int_{\mathbb{R}^d}f_0dv\right)\right\|_{L^2}\leq M_0\exp(-\delta t)\left\|f_0-\int_{\mathbb{R}^d}f_0dv\right\|_{L^2}.
\end{equation}
\item The 'soft potential' case $-(d-1)<\alpha,\beta<0$, : suppose that $f_0\in L^2((|v|+1)^\delta)$, $(\delta>0)$, for any $M_1>0$, there exist $p>M_1$ and $M_2>0$ such that
\begin{equation}\label{LinearizedBoltzmannPolyDecay}
\left\|\mathcal{S}(t)\left(f_0-\int_{\mathbb{R}^d}f_0dv\right)\right\|_{L^2}\leq M_2t^{-p}\left\|f_0-\int_{\mathbb{R}^d}f_0dv\right\|_{L^2}.
\end{equation}
\end{itemize} 
\end{theorem}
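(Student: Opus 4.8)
The plan is to fit the linearized Boltzmann equation $(\ref{BoltzmannLinearized})$ into the abstract framework $(\ref{Introe1})$ with $\mathcal{A}=v\cdot\nabla_x$ and $\mathcal{K}=-L$, and then invoke the observability-to-decay mechanism of Section 3. On $L^2(\mathbb{T}^d\times\mathbb{R}^d)$ the transport operator $\mathcal{A}$ is skew-adjoint (integration by parts in $x$, no boundary terms on the torus), and the dissipation identity displayed after $(\ref{BoltzmannEnergy})$ gives $\langle\mathcal{K}f,f\rangle=D[f]:=\langle -Lf,f\rangle\ge 0$. Thus, by Lemma $\ref{LemmaAmmari}$ (together with the other lemmas of Section 3), everything reduces to an observability inequality for the free-transport evolution $g(t,x,v)=f_0(x-vt,v)$ of an admissible datum $f_0$ (with the orthogonality of the theorem, i.e. $f_0\perp\ker L$): one needs $\int_0^T D[g]\,dt\ge C(T)\,\|f_0\|_\ast^2$, where in the hard-potential case $\|\cdot\|_\ast=\|\cdot\|_{L^2}$ and it suffices that $C(T)>0$ for one fixed $T$ (yielding the exponential rate $(\ref{LinearizedBoltzmannExDecay})$), whereas in the soft-potential case $\|\cdot\|_\ast$ must be the velocity-weighted norm $\|\cdot\|_{L^2((|v|+1)^{\alpha/2})}$, and the resulting weak observability is fed into the interpolation argument already used for Theorem $\ref{TheoremTransportexponentialdecay3}$, producing the faster-than-polynomial rate $(\ref{LinearizedBoltzmannPolyDecay})$ for every $p$.

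To prove the observability inequality, split $L=-\nu(v)\,\mathrm{Id}+K$, where $\nu(v)=\int_{\R^d\times\mathbb{S}^{d-1}}\mathcal{B}(|v-v_*|,\omega)\mu(v_*)\,d\omega\,dv_*$ is the collision frequency identified as dominant after $(\ref{B3})$ and $K$ collects the remaining three terms of $(\ref{BoltzmannLinearizedCollision})$, with (scalar, symmetric) kernel $k(v,v_*)$; then $D[g]=\int \nu(v)|g|^2\,dx\,dv-\langle Kg,g\rangle$. The $\nu$-part is conserved by transport (translation-invariance of $dx$ on $\mathbb{T}^d$), so $\int_0^T\!\!\int \nu(v)|g(t)|^2\,dx\,dv\,dt=T\int \nu(v)|f_0|^2\,dx\,dv\ge T M_1\int(|v|+1)^\alpha|f_0|^2$ by $(\mathbb{B}_1)$. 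For the $K$-part, expand $f_0(x,v)=\sum_{n\in\mathbb{Z}^d}\hat f_0(n,v)e^{2\pi i n\cdot x}$, so that $g(t,x,v)=\sum_n\hat f_0(n,v)e^{2\pi i n\cdot(x-vt)}$ and
$$\int_0^T\langle Kg,g\rangle\,dt=\sum_{n\in\mathbb{Z}^d}\int_{\R^d\times\R^d}k(v,v_*)\,\hat f_0(n,v_*)\,\overline{\hat f_0(n,v)}\left(\int_0^T e^{2\pi i n\cdot(v-v_*)t}\,dt\right)dv\,dv_*.$$
The $n=0$ term is $T\langle Kh,h\rangle$ with $h:=\hat f_0(0,\cdot)$; for $n\ne 0$ the inner time integral is bounded in modulus by $\min\{T,(\pi|n\cdot(v-v_*)|)^{-1}\}$, and since $\{n\cdot(v-v_*)=0\}$ is Lebesgue-null for $n\ne0$, dominated convergence (the $n$-summable majorant being supplied by the kernel bounds coming from $(\mathbb{B}_2)$) gives $\sum_{n\ne 0}(\cdots)=o(T)$ as $T\to\infty$. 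Collecting terms,
$$\int_0^T D[g]\,dt=T\,\langle -Lh,h\rangle+T\sum_{n\ne 0}\int \nu(v)|\hat f_0(n,v)|^2\,dv-o(T).$$

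In the hard-potential regime $\alpha,\beta>0$ one has $\nu(v)\ge M_1>0$, so the middle term dominates $M_1 T\|f_0-h\|_{L^2}^2$; combined with $\langle -Lh,h\rangle\ge 0$ and $h\perp\ker L$ this gives $\int_0^T D[g]\,dt\ge c\,T\,\|f_0\|_{L^2}^2$ for all large $T$, and fixing such a $T$ yields the observability inequality with a fixed constant, hence $(\ref{LinearizedBoltzmannExDecay})$. In the soft-potential regime $-(d-1)<\alpha,\beta<0$, $\nu(v)$ decays at infinity and the same computation gives only $\int_0^T D[g]\,dt\ge c\,T\,\|f_0\|_{L^2((|v|+1)^{\alpha/2})}^2-o(T)$; since $f_0\in L^2((|v|+1)^\delta)$ with $\delta>0$, interpolating between this weighted bound and the higher-weight norm and inserting the result into the weak-damping version of the abstract lemma (exactly as in Theorem $\ref{TheoremTransportexponentialdecay3}$) gives decay faster than any polynomial, i.e. $(\ref{LinearizedBoltzmannPolyDecay})$. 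The main obstacle is precisely the non-oscillating Fourier mode $n=0$ appearing above: its contribution to the $K$-term is genuinely of order $T$, so it must be absorbed into $T\int\nu(v)|f_0|^2$ rather than discarded, and carrying this out without silently reintroducing a full spectral-gap/coercivity assumption on $L$ is the crux — one has to use $h\perp\ker L$ together with the positivity $-L\ge0$ and $(\mathbb{B}_1)$ (in the soft case also a weighted coercivity estimate for the space-homogeneous block, or equivalently the classical relaxation of the homogeneous equation) — and one has to make the $o(T)$ bound on the modes $n\ne0$ quantitative and uniformly summable in $n$, which is where $(\mathbb{B}_2)$ enters. The remaining ingredients — skew-adjointness of $v\cdot\nabla_x$, the dissipation identity, and the observability-to-decay transfer — are already in hand from the earlier sections.
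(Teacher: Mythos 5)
Your overall architecture is the same as the paper's: reduce the theorem, via the Section 3 machinery, to an observability inequality for the free transport flow $g(t,x,v)=f_0(x-vt,v)$, prove that inequality by Fourier expansion in $x$ with the collision-frequency part giving a term of size $T\int(1+|v|)^\alpha|f_0|^2$ through $(\ref{B1})$ and the remaining (oscillatory) contributions controlled through $(\ref{B3})$, and then run the H\"older-interpolation/moment-propagation iteration with Lemma \ref{LemmaAmmari} for the soft case. Your split $L=-\nu(v)\mathrm{Id}+K$ is a reorganization of what the paper does by expanding the full Dirichlet form mode by mode (Proposition \ref{BPropObservability}, Parts 1--3, where the cross terms are estimated by Grad-type changes of variables with the weight $(1+|v|)^{\beta-2/3}\le(1+|v|)^\alpha$ and the factor $\min\{TC(\epsilon),C/\epsilon\}$).

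However, there is a genuine gap exactly at the point you yourself call ``the crux'': the spatially homogeneous mode $n=0$. On that sector $g$ does not evolve, the time integral contributes exactly $T\langle -Lh,h\rangle$ with $h=\hat f_0(0,\cdot)$, and the observability bound you need, $\int_0^T D[g]\,dt\ge cT\|f_0\|^2$ (weighted by $(1+|v|)^{\alpha}$ in the soft case), restricted to such data is \emph{literally} the statement $\langle -Lh,h\rangle\ge c\|h\|^2$ for $h\perp\ker L$ --- a spectral-gap (hard potential) or weighted coercivity (soft potential) estimate for the space-homogeneous linearized operator. That estimate does not follow from $-L\ge 0$, $h\perp\ker L$ and $(\ref{B1})$--$(\ref{B3})$, and it is precisely the input the theorem is advertised as avoiding; your text acknowledges the difficulty (``or equivalently the classical relaxation of the homogeneous equation'') but never supplies an argument, so the asserted conclusion $\int_0^T D[g]\,dt\ge cT\|f_0\|_{L^2}^2$ does not follow from the ingredients you list. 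In the paper no mode is singled out: every cross term is absorbed by the $T$-linear dominant term through the $\min\{TC(\epsilon),C/\epsilon\}$ bound, which is the step your reorganization turns into a missing coercivity statement. A second, smaller omission: the transfer from observability to decay cannot go through Lemma \ref{Lemma1}, because for hard potentials $\mathcal{K}^{1/2}=\sqrt{-L}$ is unbounded ($\nu(v)\simeq(1+|v|)^\alpha$); the paper needs Proposition \ref{BPropObserve}, verifying the splitting hypotheses $(\ref{Lemma3D1})$--$(\ref{Lemma3D4post})$ of Lemma \ref{Lemma3} with the truncation $I_\epsilon=\chi(|v-v_*|\le 1/\epsilon)$ and $H'=L^2((1+|v|)^\alpha)$, a step your proposal skips entirely (and your citation of Lemma \ref{LemmaAmmari} for this reduction is the wrong lemma --- that one is only the numerical iteration used for the polynomial rate).
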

\begin{remark} In this theorem, since we prove a 'weak' coercive estimate instead of  spectral gap and coercivity estimates for the linearized Boltzmann operator, we can get exponential and almost exponential decays without requiring too much assumptions on the collision kernel including the smoothness, convexity, ...  The only property that we need is that the dominant term remains dominant with our conditions $(\mathbb{B}_1)$ and $(\mathbb{B}_2)$. 
\end{remark}
\begin{remark}
Our proof works well also for the case where $B$ depends on x; however, we have not found any real application for this. 
\end{remark}
\section{The main tool}
Let $(H,<.,.>,\|.\|)$ be a real Hilbert space with its inner product and its norm, $\mathcal{A}$ be an operator on $H$ satisfying $<\mathcal{A}(x),x>=0$ for all $x$ in $H$ and $\mathcal{K}$ be a self-ajoint linear operator. Suppose that
$$<\mathcal{K}(x),y>=<x,\mathcal{K}(y)>=<\mathcal{K}^{1/2}(x),\mathcal{K}^{1/2}(y)>.$$
Let $f$ be the solution of the evolution equation
\begin{equation}\label{EquationAB}
\left\{ \begin{array}{ll} \frac{\partial f}{\partial t}+\mathcal{A}(f)=-\mathcal{K}(f), t\in\mathbb{R}_+,\vspace{.1in}\\
f(0)=f_0, f_0\in H,\end{array}\right.
\end{equation}
and let $g$ be the solution of 
\begin{equation}\label{EquationA}
\left\{ \begin{array}{ll} \frac{\partial g}{\partial t}+\mathcal{A}(g)=0, t\in\mathbb{R}_+,\vspace{.1in}\\
g(0)=f_0.\end{array}\right.
\end{equation}
\begin{lemma}\label{Lemma0}
 For all $T$ in $\mathbb{R}_+$
\begin{equation}\label{fg0}
\int_0^T\|\mathcal{K}^{1/2}(f)\|^2dt\leq\int_0^T\|\mathcal{K}^{1/2}(g)\|^2dt.
\end{equation}
\end{lemma}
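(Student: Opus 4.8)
The plan is to compare the two evolutions via an energy (entropy-like) functional and exploit the dissipative structure that both equations share, together with the fact that $\mathcal{A}$ is conservative. First I would compute the time derivative of $\|f(t)\|^2$ along $(\ref{EquationAB})$: using $\langle\mathcal{A}(f),f\rangle=0$ and $\langle\mathcal{K}(f),f\rangle=\|\mathcal{K}^{1/2}(f)\|^2$, one gets
\begin{equation*}
\frac{d}{dt}\|f(t)\|^2=-2\|\mathcal{K}^{1/2}(f(t))\|^2,
\end{equation*}
hence, integrating on $[0,T]$,
\begin{equation*}
2\int_0^T\|\mathcal{K}^{1/2}(f)\|^2dt=\|f_0\|^2-\|f(T)\|^2\le\|f_0\|^2.
\end{equation*}
The same computation along $(\ref{EquationA})$ gives $\frac{d}{dt}\|g(t)\|^2=0$, so $\|g(t)\|\equiv\|f_0\|$, which is the explicit conservation the introduction alludes to.

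The crux is therefore to produce the lower bound $\int_0^T\|\mathcal{K}^{1/2}(g)\|^2dt\ge\|f_0\|^2-\|f(T)\|^2$, i.e. to transfer the dissipation identity for $f$ into a statement about $g$. The natural device is to differentiate the cross term $\langle f(t),g(t)\rangle$ or, better, the quantity $\|f(t)-g(t)\|^2$. Setting $w=f-g$, we have $w(0)=0$ and $\partial_t w+\mathcal{A}(w)=-\mathcal{K}(f)$; multiplying by $w$ and using $\langle\mathcal{A}(w),w\rangle=0$ yields
\begin{equation*}
\frac{d}{dt}\|w(t)\|^2=-2\langle\mathcal{K}(f),w\rangle=-2\langle\mathcal{K}(f),f\rangle+2\langle\mathcal{K}(f),g\rangle.
\end{equation*}
Now $\langle\mathcal{K}(f),g\rangle=\langle\mathcal{K}^{1/2}(f),\mathcal{K}^{1/2}(g)\rangle\le\|\mathcal{K}^{1/2}(f)\|\,\|\mathcal{K}^{1/2}(g)\|$ by self-adjointness and Cauchy--Schwarz, and we can absorb the $\|\mathcal{K}^{1/2}(f)\|^2$ that appears after Young's inequality against the $-2\|\mathcal{K}^{1/2}(f)\|^2$ term. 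Concretely, $2\langle\mathcal{K}(f),g\rangle\le 2\|\mathcal{K}^{1/2}(f)\|^2+\tfrac12\|\mathcal{K}^{1/2}(g)\|^2$ would be too lossy; instead I would keep the identity in the sharper form and integrate $\frac{d}{dt}\|w\|^2=-2\|\mathcal{K}^{1/2}(f)\|^2+2\langle\mathcal{K}^{1/2}(f),\mathcal{K}^{1/2}(g)\rangle$ directly.

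Integrating from $0$ to $T$ and using $w(0)=0$:
\begin{equation*}
\|w(T)\|^2+2\int_0^T\|\mathcal{K}^{1/2}(f)\|^2dt=2\int_0^T\langle\mathcal{K}^{1/2}(f),\mathcal{K}^{1/2}(g)\rangle dt\le 2\Big(\int_0^T\|\mathcal{K}^{1/2}(f)\|^2dt\Big)^{1/2}\Big(\int_0^T\|\mathcal{K}^{1/2}(g)\|^2dt\Big)^{1/2},
\end{equation*}
by Cauchy--Schwarz in $t$. Dropping $\|w(T)\|^2\ge0$ and writing $a=(\int_0^T\|\mathcal{K}^{1/2}(f)\|^2)^{1/2}$, $b=(\int_0^T\|\mathcal{K}^{1/2}(g)\|^2)^{1/2}$, this reads $2a^2\le 2ab$, hence $a\le b$, which is exactly $(\ref{fg0})$. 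The main obstacle I anticipate is bookkeeping at the level of regularity: these manipulations require that $f,g$ be genuine (say, mild or strong) solutions for which $t\mapsto\|f(t)\|^2$ and $t\mapsto\|w(t)\|^2$ are absolutely continuous and the chain rule applies, and that $\mathcal{K}^{1/2}(f),\mathcal{K}^{1/2}(g)\in L^2(0,T;H)$ so the integrals are finite; for unbounded $\mathcal{K}$ one argues first on a dense core or by a standard approximation/semigroup density argument and then passes to the limit. Everything else is Cauchy--Schwarz and Young.
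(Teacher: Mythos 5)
Your proposal is correct and follows essentially the same route as the paper: both compute the evolution of $\|f-g\|^2$ starting from $w(0)=0$, use $\langle\mathcal{A}(\cdot),\cdot\rangle=0$ and the self-adjoint square root of $\mathcal{K}$, and then control the cross term $\int_0^T\langle\mathcal{K}^{1/2}(f),\mathcal{K}^{1/2}(g)\rangle dt$. The only cosmetic difference is that you close the estimate with Cauchy--Schwarz in time ($2a^2\le 2ab$), whereas the paper uses the pointwise Young inequality $2\langle\mathcal{K}^{1/2}(f),\mathcal{K}^{1/2}(g)\rangle\le\|\mathcal{K}^{1/2}(f)\|^2+\|\mathcal{K}^{1/2}(g)\|^2$; both yield $(\ref{fg0})$.
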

\begin{proof}
Consider  the norm of $\|f-g\|^2$
\begin{eqnarray*}
\|f-g\|^2(T) & = & 2\int_0^T<\partial_t f-\partial_t g,f-g>dt\\
& = & -\int_0^T 2<\mathcal{K}^{1/2}(f),\mathcal{K}^{1/2}(f-g)>dt\\
&=&-2\int_0^T\| \mathcal{K}^{1/2}(f)\|^2dt+2\int_0^T<\mathcal{K}^{1/2}(f),\mathcal{K}^{1/2}(g)>dt\\
&\leq &-\int_0^T\|\mathcal{K}^{1/2}(f)\|^2dt+\int_0^T\|\mathcal{K}^{1/2}(g)\|^2dt,
\end{eqnarray*}
which leads to
\begin{eqnarray*}
\int_0^T\|\mathcal{K}^{1/2}(f)\|^2dt&\leq &\int_0^T\|\mathcal{K}^{1/2}(g)\|^2dt.
\end{eqnarray*}
\end{proof}
\begin{lemma}\label{Lemma1} 
If $\mathcal{K}^{1/2}$ is bounded, then for all $T$ in $\mathbb{R}_+$
\begin{equation}\label{fg}
M_1\int_0^T\|\mathcal{K}^{1/2}(g)\|^2dt\leq \int_0^T\|\mathcal{K}^{1/2}(f)\|^2dt,
\end{equation}
where $M_1$ is a positive constant.
\end{lemma}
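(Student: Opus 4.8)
The plan is to run the Duhamel / Gronwall comparison in the opposite direction from Lemma \ref{Lemma0}. Set $w = f-g$. Subtracting (\ref{EquationA}) from (\ref{EquationAB}) we get $\partial_t w + \mathcal{A}(w) = -\mathcal{K}(f)$ with $w(0)=0$. Taking the inner product with $w$ and using $\langle\mathcal{A}(w),w\rangle=0$ gives, exactly as in the proof of Lemma \ref{Lemma0},
\begin{equation*}
\tfrac12\|w\|^2(T) = -\int_0^T\langle\mathcal{K}^{1/2}(f),\mathcal{K}^{1/2}(w)\rangle\,dt = -\int_0^T\|\mathcal{K}^{1/2}(f)\|^2\,dt + \int_0^T\langle\mathcal{K}^{1/2}(f),\mathcal{K}^{1/2}(g)\rangle\,dt .
\end{equation*}
Hence $\int_0^T\langle\mathcal{K}^{1/2}(f),\mathcal{K}^{1/2}(g)\rangle\,dt \ge \int_0^T\|\mathcal{K}^{1/2}(f)\|^2\,dt$, i.e. the cross term dominates $\int_0^T\|\mathcal{K}^{1/2}(f)\|^2\,dt$ rather than being bounded by it.

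Next I would estimate the cross term from above in terms of $\int_0^T\|\mathcal{K}^{1/2}(f)\|^2\,dt$ alone, using the boundedness of $\mathcal{K}^{1/2}$. Write $\mathcal{K}^{1/2}(g) = \mathcal{K}^{1/2}(f) - \mathcal{K}^{1/2}(w)$, so
\begin{equation*}
\int_0^T\langle\mathcal{K}^{1/2}(f),\mathcal{K}^{1/2}(g)\rangle\,dt \le \int_0^T\|\mathcal{K}^{1/2}(f)\|^2\,dt + \Big(\int_0^T\|\mathcal{K}^{1/2}(f)\|^2\,dt\Big)^{1/2}\Big(\int_0^T\|\mathcal{K}^{1/2}(w)\|^2\,dt\Big)^{1/2}.
\end{equation*}
Since $\mathcal{K}^{1/2}$ is bounded, $\|\mathcal{K}^{1/2}(w)\|^2 \le \|\mathcal{K}^{1/2}\|^2_{\mathrm{op}}\,\|w\|^2$, and from the identity above $\tfrac12\|w\|^2(t)\le \int_0^t\|\mathcal{K}^{1/2}(f)\|^2\,ds \le \int_0^T\|\mathcal{K}^{1/2}(f)\|^2\,ds$ for every $t\le T$ (the middle inequality because $-\int_0^t\|\mathcal{K}^{1/2}(f)\|^2 + \int_0^t\langle\mathcal{K}^{1/2}(f),\mathcal{K}^{1/2}(g)\rangle \le \int_0^t\langle\mathcal{K}^{1/2}(f),\mathcal{K}^{1/2}(g)\rangle$, combined with Cauchy--Schwarz and monotonicity from Lemma \ref{Lemma0}; alternatively bound $\|w\|^2(t)$ directly). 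Integrating, $\int_0^T\|\mathcal{K}^{1/2}(w)\|^2\,dt \le 2\|\mathcal{K}^{1/2}\|^2_{\mathrm{op}}\,T\int_0^T\|\mathcal{K}^{1/2}(f)\|^2\,dt$, which feeds back into the previous display to give $\int_0^T\langle\mathcal{K}^{1/2}(f),\mathcal{K}^{1/2}(g)\rangle\,dt \le (1 + C\sqrt{T})\int_0^T\|\mathcal{K}^{1/2}(f)\|^2\,dt$ with $C = \sqrt{2}\,\|\mathcal{K}^{1/2}\|_{\mathrm{op}}$.

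Finally, combining the lower bound from the first step, $\int_0^T\langle\mathcal{K}^{1/2}(f),\mathcal{K}^{1/2}(g)\rangle\,dt \ge \int_0^T\|\mathcal{K}^{1/2}(f)\|^2\,dt$, with a bound on $\int_0^T\|\mathcal{K}^{1/2}(g)\|^2\,dt$ in terms of the cross term — again by Cauchy--Schwarz, $\int_0^T\|\mathcal{K}^{1/2}(g)\|^2\,dt$ is controlled once we control $\int_0^T\|\mathcal{K}^{1/2}(f)\|^2\,dt$ and $\int_0^T\|\mathcal{K}^{1/2}(w)\|^2\,dt$, both already done — yields $\int_0^T\|\mathcal{K}^{1/2}(g)\|^2\,dt \le M_1^{-1}\int_0^T\|\mathcal{K}^{1/2}(f)\|^2\,dt$ for a constant $M_1 = M_1(T,\|\mathcal{K}^{1/2}\|_{\mathrm{op}})>0$, which is the claim. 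I expect the main obstacle to be bookkeeping the $T$-dependence of the constant cleanly: the naive Gronwall-type estimate produces a constant that degenerates as $T\to\infty$, so one must either accept a $T$-dependent $M_1$ (which is all that is needed for the observability argument on a fixed window, as in (\ref{Introe3})) or argue more carefully that $\|w\|$ stays comparable to $\|f\|$ uniformly; the boundedness hypothesis on $\mathcal{K}^{1/2}$ is exactly what makes the first, weaker statement go through.
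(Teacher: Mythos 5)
Your overall architecture is the same as the paper's: estimate $w=f-g$ from its equation $\partial_t w+\mathcal{A}(w)=-\mathcal{K}(f)$, use the boundedness of $\mathcal{K}^{1/2}$ to turn a bound on $\|w\|$ into a bound on $\int_0^T\|\mathcal{K}^{1/2}(w)\|^2dt$ by $\int_0^T\|\mathcal{K}^{1/2}(f)\|^2dt$, and finish with the triangle inequality, accepting a $T$-dependent $M_1$ (which is indeed all the paper's proof gives and all that is needed). But the one quantitative step that carries the proof is exactly the one you do not establish: the claim $\frac12\|w\|^2(t)\le\int_0^t\|\mathcal{K}^{1/2}(f)\|^2ds$. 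The justification you sketch (drop $-\int_0^t\|\mathcal{K}^{1/2}(f)\|^2$, Cauchy--Schwarz on the cross term, then Lemma \ref{Lemma0}) only yields $\frac12\|w\|^2(t)\le\bigl(\int_0^t\|\mathcal{K}^{1/2}(f)\|^2ds\bigr)^{1/2}\bigl(\int_0^t\|\mathcal{K}^{1/2}(g)\|^2ds\bigr)^{1/2}\le\int_0^t\|\mathcal{K}^{1/2}(g)\|^2ds$, because Lemma \ref{Lemma0} goes the wrong way ($F\le G$, not $G\le F$); replacing the $g$-integral by the $f$-integral at that point is precisely the lemma being proved, so the step is circular. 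Worse, the inequality with constant $1$ is false in general: since $\|g(t)\|=\|f_0\|$ and $\|f(t)\|^2=\|f_0\|^2-2\int_0^t\|\mathcal{K}^{1/2}(f)\|^2ds$, your claim is equivalent to $\langle f(t),g(t)-f(t)\rangle\ge0$, which fails, e.g., in $H=\mathbb{R}^2$ with $\mathcal{A}$ a rotation and $\mathcal{K}=\kappa\,e_1\otimes e_1$ with $\kappa$ large: after the damped component has died, $f(t)$ is a small vector while $g(t)$ keeps rotating with norm $\|f_0\|$, and at suitable times $\langle f(t),g(t)\rangle<0<\|f(t)\|^2$.

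The missing ingredient is a Gronwall-type estimate on $w$ sourced by $\mathcal{K}(f)$ alone; with it, the triangle inequality finishes the proof and none of your cross-term bookkeeping is needed. Either argue as the paper does, $\partial_t\|w\|^2=-2\langle\mathcal{K}^{1/2}(f),\mathcal{K}^{1/2}(w)\rangle\le\|\mathcal{K}^{1/2}(f)\|^2+\|\mathcal{K}^{1/2}\|_{\mathrm{op}}^2\|w\|^2$ plus Gronwall, giving $\|w\|^2(t)\le e^{Ct}\int_0^t\|\mathcal{K}^{1/2}(f)\|^2ds$; or use the square-root Gronwall inequality on $\frac12\partial_t\|w\|^2\le\|\mathcal{K}^{1/2}\|_{\mathrm{op}}\|\mathcal{K}^{1/2}(f)\|\,\|w\|$ to get $\|w\|(t)\le\|\mathcal{K}^{1/2}\|_{\mathrm{op}}\int_0^t\|\mathcal{K}^{1/2}(f)\|ds$, hence $\|w\|^2(t)\le\|\mathcal{K}^{1/2}\|_{\mathrm{op}}^2\,t\int_0^t\|\mathcal{K}^{1/2}(f)\|^2ds$; or, staying closest to what you wrote, keep the honest bound $\frac12\|w\|^2(t)\le F^{1/2}G^{1/2}$ (with $F,G$ the two time integrals over $[0,T]$), deduce $\int_0^T\|\mathcal{K}^{1/2}(w)\|^2dt\le2\|\mathcal{K}^{1/2}\|_{\mathrm{op}}^2\,T\,F^{1/2}G^{1/2}$, and absorb $G$ via $G^{1/2}\le F^{1/2}+W^{1/2}$ and Young's inequality. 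Any of these routes yields $G\le C(T,\|\mathcal{K}^{1/2}\|_{\mathrm{op}})\,F$, i.e., the lemma with the same $T$-dependent constant $M_1$ that the paper's Gronwall proof produces.
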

\begin{proof}
Take the derivative in time of $\|f-g\|^2$
\begin{eqnarray*}
\partial_t\|f-g\|^2 & = & 2<\partial_t f-\partial_t g,f-g>\\
& = & -2<\mathcal{K}^{1/2}(f),\mathcal{K}^{1/2}(f-g)>\\
& \leq & \| \mathcal{K}^{1/2}(f)\|^2+\| \mathcal{K}^{1/2}(f-g)\|^2\\
& \leq & \| \mathcal{K}^{1/2}(f)\|^2+C\|f-g\|^2,
\end{eqnarray*}
the last inequality follows from the boundedness of $\mathcal{K}^{1/2}(f-g)$, where $C$ is a positive constant. Gronwall's inequality then leads to 
\begin{eqnarray*}
\|f-g\|^2(t) \leq\int_0^t\exp(C(t-s))\| \mathcal{K}^{1/2}(f)\|^2ds,
\end{eqnarray*}
which together with the boundedness of $\mathcal{K}^{1/2}(f-g)$ leads to 
\begin{eqnarray*}
\| \mathcal{K}^{1/2}(f-g)\|^2(t) \leq C\exp(Ct)\int_0^t \| \mathcal{K}^{1/2}(f)\|^2ds,
\end{eqnarray*}
where $C$ is some positive constant.
This deduces
\begin{eqnarray*}
\int_0^T \| \mathcal{K}^{1/2}(f-g)\|^2dt \leq CT\exp(CT)\int_0^T \| \mathcal{K}^{1/2}(f)\|^2dt.
\end{eqnarray*}
The triangle inequality deduces
\begin{eqnarray*}
\int_0^T \| \mathcal{K}^{1/2}(g)\|^2 dt \leq (CT\exp(CT)+1)\int_0^T \| \mathcal{K}^{1/2}(f)\|^2dt.
\end{eqnarray*}

\end{proof}
\begin{lemma}\label{Lemma3} Let $(H',\|.\|_0)$ be a Banach subspace of $H$ with its norm. Suppose that for any $h$ in $H'$, $\|h\|\leq M\|h\|_0$, where $M$ is a positive constant and that for any solution $g$ of $(\ref{EquationA})$
\begin{equation}
\label{Lemma3D0}\|f_0\|=\|g(t)\|,\forall t\in\mathbb{R}_+.
\end{equation} 
We assume that for any positive constant $\epsilon$, the operator $\mathcal{K}$ could be decomposed into the sum of two linear operators $\mathcal{K}_{\epsilon,1}$ and $\mathcal{K}_{\epsilon,2}$ such that 
\begin{eqnarray}
\label{Lemma3D1} & & \mathcal{K}=\mathcal{K}_{\epsilon,1}+\mathcal{K}_{\epsilon,2},\\
\label{Lemma3D2} & &\| \mathcal{K}^{1/2}\|^2=\| {\mathcal{K}_{\epsilon,1}}^{1/2}\|^2+\| {\mathcal{K}_{\epsilon,2}}^{1/2}\|^2,\\
\label{Lemma3D3} & &\| {\mathcal{K}_{\epsilon,1}}^{1/2}(h)\|\leq C_1(\epsilon)\|h\|, ~~\forall h\in H',\\
\label{Lemma3D4} & &\| {\mathcal{K}_{\epsilon,2}}^{1/2}(h)\|\leq C_2(\epsilon)\|h\|_0, ~~\forall h\in H',\\
\label{Lemma3D4post} & &\| {\mathcal{K}}^{1/2}(h)\|\leq C(\mathcal{K})\|h\|_0, ~~\forall h\in H',
\end{eqnarray}
where $C_1(\epsilon)$, $C_2(\epsilon)$ and $C(\mathcal{K})$ are positive constants, $C_2(\epsilon)$ tends to $0$ as $\epsilon$ tends to $0$, and $\mathcal{K}^{1/2}_{\epsilon,i},$ $i\in\{1,2\}$ are defined in the following way
$$<\mathcal{K}_{\epsilon,i}(h),k>=<\mathcal{K}_{\epsilon,i}^{1/2}(h),\mathcal{K}_{\epsilon,i}^{1/2}(k)>, ~~~\forall h,k \in H'.$$
Suppose that there exist positive numbers $T_0$ and $C$ such that
\begin{equation}\label{Lemma3D5pre}
\int_0^{T_0}\|\mathcal{K}^{1/2}(g)\|^2dt\geq C\|f_0\|_0^2.\end{equation}
Then there exists a constant $M_1$ depending on $T_0$ such that
\begin{equation}\label{Lemma3D5}
M_1\int_0^T\|\mathcal{K}^{1/2}(g)\|^2dt\leq \int_0^T\|\mathcal{K}^{1/2}(f)\|^2dt,
\end{equation}
for all $T\geq T_0$.
\end{lemma}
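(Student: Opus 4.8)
The goal is to bootstrap from the "one-shot" observability inequality $(\ref{Lemma3D5pre})$ on the fixed interval $[0,T_0]$ to the damped-equation estimate $(\ref{Lemma3D5})$ valid on every $[0,T]$ with $T\ge T_0$. The strategy I would follow proceeds in three stages: first replace $g$ by $f$ in the observability inequality, then iterate over successive time windows using the conservation property $(\ref{Lemma3D0})$ of the free flow, and finally sum up.

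\emph{Stage 1: transfer the observability from $g$ to $f$.} Fix $\epsilon>0$ and use the splitting $(\ref{Lemma3D1})$–$(\ref{Lemma3D2})$. From $(\ref{Lemma3D5pre})$, and since $\|\mathcal{K}^{1/2}(g)\|^2 = \|\mathcal{K}_{\epsilon,1}^{1/2}(g)\|^2 + \|\mathcal{K}_{\epsilon,2}^{1/2}(g)\|^2$, together with $(\ref{Lemma3D4})$ and the conservation $\|g(t)\| = \|f_0\|$ we get
\begin{eqnarray*}
C\|f_0\|_0^2 &\le& \int_0^{T_0}\|\mathcal{K}_{\epsilon,1}^{1/2}(g)\|^2dt + \int_0^{T_0}\|\mathcal{K}_{\epsilon,2}^{1/2}(g)\|^2dt\\
&\le& \int_0^{T_0}\|\mathcal{K}_{\epsilon,1}^{1/2}(g)\|^2dt + C_2(\epsilon)^2 T_0 \|f_0\|_0^2.
\end{eqnarray*}
Choosing $\epsilon$ small enough that $C_2(\epsilon)^2 T_0 \le C/2$, this leaves a lower bound $\frac{C}{2}\|f_0\|_0^2 \le \int_0^{T_0}\|\mathcal{K}_{\epsilon,1}^{1/2}(g)\|^2dt$. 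Now I would compare $\mathcal{K}_{\epsilon,1}^{1/2}(g)$ with $\mathcal{K}_{\epsilon,1}^{1/2}(f)$: by $(\ref{Lemma3D3})$, $\|\mathcal{K}_{\epsilon,1}^{1/2}(f-g)\|^2 \le C_1(\epsilon)^2\|f-g\|^2$, and from the computation already used in Lemma $\ref{Lemma1}$ one has $\|f-g\|^2(t)\le \int_0^t\|\mathcal{K}^{1/2}(f)\|^2ds$ (in fact directly, since $\partial_t\|f-g\|^2 = -2\langle\mathcal{K}^{1/2}(f),\mathcal{K}^{1/2}(f-g)\rangle \le \|\mathcal{K}^{1/2}(f)\|^2 + \|\mathcal{K}^{1/2}(f-g)\|^2$ and $\|\mathcal{K}^{1/2}(f-g)\| \le \|\mathcal{K}^{1/2}(f)\| + \|\mathcal{K}^{1/2}(g)\|$ — but to avoid the $\|g\|$ term it is cleanest to use that $\frac{d}{dt}\|f-g\|^2 \le 0$ is \emph{false}, so instead use $\|f-g\|^2(t) \le C\,t\,e^{Ct}\int_0^t\|\mathcal{K}^{1/2}(f)\|^2ds$ as derived in Lemma $\ref{Lemma1}$, or rather combine $(\ref{Lemma3D3})$ with the Gronwall bound there). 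This gives a constant $C_3(\epsilon,T_0)$ with $\int_0^{T_0}\|\mathcal{K}_{\epsilon,1}^{1/2}(f-g)\|^2dt \le C_3 \int_0^{T_0}\|\mathcal{K}^{1/2}(f)\|^2dt$. Using $\|\mathcal{K}_{\epsilon,1}^{1/2}(g)\|^2 \le 2\|\mathcal{K}_{\epsilon,1}^{1/2}(f)\|^2 + 2\|\mathcal{K}_{\epsilon,1}^{1/2}(f-g)\|^2 \le 2\|\mathcal{K}^{1/2}(f)\|^2 + 2\|\mathcal{K}_{\epsilon,1}^{1/2}(f-g)\|^2$ (the first term bounded via $(\ref{Lemma3D2})$), I conclude there is $C_4 = C_4(T_0)>0$ with
$$\|f_0\|_0^2 \le C_4 \int_0^{T_0}\|\mathcal{K}^{1/2}(f)\|^2dt.$$

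\emph{Stage 2: iterate and sum.} For $k\ge 0$ apply the above with $f_0$ replaced by $f(kT_0)$ (the damped semigroup is well-defined and $\|f\|$ is non-increasing, since $\partial_t\|f\|^2 = -2\|\mathcal{K}^{1/2}(f)\|^2\le 0$). Since $\|f(kT_0)\| \le M\|f(kT_0)\|_0$ is the wrong direction, I instead need the \emph{free} solution started at $f(kT_0)$, whose conserved norm equals $\|f(kT_0)\|$; the point is that $(\ref{Lemma3D5pre})$ already refers to $g$ with $g(0)=$ that data, and Stage 1 converts it to $\|f(kT_0)\|_0^2 \le C_4\int_{kT_0}^{(k+1)T_0}\|\mathcal{K}^{1/2}(f)\|^2dt$. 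Hmm — the subtlety is that $\|f_0\|_0$ (the $H'$ norm) appears on the left, while what I want to control is $\|\mathcal{K}^{1/2}(g)\|$ on $[0,T]$. So I would instead bound $\int_0^T\|\mathcal{K}^{1/2}(g)\|^2dt$ directly: by $(\ref{Lemma3D4post})$, $\|\mathcal{K}^{1/2}(g)\|^2 \le C(\mathcal{K})^2\|g\|_0^2$; but $g$ need not stay in $H'$, so one must argue that the relevant quantity is $\int_0^T\|\mathcal{K}^{1/2}(g)\|^2 dt$ split into $\lceil T/T_0\rceil$ windows, on each of which, by restarting, it is comparable to $\|f_0\|^2$-type quantities, and these telescope because the free flow conserves the $H$-norm. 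The cleanest route: show $\int_0^{T}\|\mathcal{K}^{1/2}(g)\|^2dt \le C_5\lceil T/T_0\rceil \|f_0\|_0^2$ cannot hold without bounding $\|f_0\|_0$; so actually the statement should be read as needing the $H'$-data, consistent with the applications, and the real content of $(\ref{Lemma3D5})$ is that the left side is already $\le \int_0^{T_0}\|\mathcal{K}^{1/2}(g)\|^2dt \cdot (\text{number of windows})$, each window handled by Stage 1 applied at time $kT_0$ to the \emph{damped} solution, whose $H$-norm only decreases. Summing over $k=0,\dots,\lceil T/T_0\rceil-1$ and using monotonicity of $\|f(t)\|$ gives a constant $M_1(T_0)$ with $(\ref{Lemma3D5})$.

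\emph{Main obstacle.} The delicate point is the passage from $g$ to $f$ in Stage 1: one must split $\mathcal{K}$ so that the "bad" part $\mathcal{K}_{\epsilon,1}$ (bounded on $H$, via $(\ref{Lemma3D3})$) is controlled using a Gronwall estimate on $\|f-g\|$ exactly as in Lemma $\ref{Lemma1}$, while the "good" part $\mathcal{K}_{\epsilon,2}$ is made small \emph{on the free solution} using $(\ref{Lemma3D4})$ and the conservation $(\ref{Lemma3D0})$ — this is why both the decomposition hypotheses and the norm-conservation hypothesis are needed simultaneously. Balancing the choice of $\epsilon$ (to kill $C_2(\epsilon)^2T_0$) against the resulting blow-up of $C_1(\epsilon)$ in the Gronwall constant $C_3(\epsilon,T_0)$ is the crux; it works because $\epsilon$ is chosen \emph{first}, depending only on $T_0$ and the fixed constant $C$ from $(\ref{Lemma3D5pre})$, and only afterwards does one absorb the (now fixed, possibly large) constant $C_3$ into $C_4$. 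Once Stage 1 is in place, Stages 2–3 are a routine window-by-window iteration using the monotonicity $\partial_t\|f\|^2 \le 0$ and the conservation of the free flow.
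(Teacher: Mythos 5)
Your Stage 1 is essentially the engine of the paper's proof (split $\mathcal{K}=\mathcal{K}_{\epsilon,1}+\mathcal{K}_{\epsilon,2}$, Gronwall for $\|f-g\|^2$ with constant $C_1(\epsilon)^2$, use the smallness of $C_2(\epsilon)$ together with the observability $(\ref{Lemma3D5pre})$ to make the $\mathcal{K}_{\epsilon,2}$-contribution of $g$ negligible), but two points do not survive scrutiny. First, inside the Gronwall step you cannot simply quote the bound of Lemma $\ref{Lemma1}$: here $\mathcal{K}^{1/2}$ is not bounded, and the differential inequality for $\|f-g\|^2$ unavoidably retains the cross term $2\|\mathcal{K}^{1/2}_{\epsilon,2}(f)\|\,\|\mathcal{K}^{1/2}_{\epsilon,2}(f-g)\|$, which involves $\mathcal{K}^{1/2}_{\epsilon,2}(g)$ and is controlled neither by $\|f-g\|$ nor by $\int\|\mathcal{K}^{1/2}(f)\|^2$. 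The paper carries this term through the whole chain ($(\ref{Lemma3D6})$--$(\ref{Lemma3D9a})$) and only at the very end splits it by Young's inequality with a parameter $\delta$, absorbing the resulting $2\delta\int_0^T\|\mathcal{K}^{1/2}(g)\|^2dt$ into the left-hand side, $\delta$ being chosen after $\epsilon$; your plan never says how this term is disposed of, and the mechanism you do invoke --- $(\ref{Lemma3D4})$ together with ``the conservation $(\ref{Lemma3D0})$'' --- conflates the two norms: $(\ref{Lemma3D0})$ conserves $\|\cdot\|$, while $(\ref{Lemma3D4})$ requires control of $\|g(t)\|_0$.

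Second, and more seriously, Stage 2 does not close. Applying Stage 1 at time $kT_0$ to the damped solution yields $\|f(kT_0)\|_0^2\lesssim\int_{kT_0}^{(k+1)T_0}\|\mathcal{K}^{1/2}(f)\|^2dt$; but what must be dominated on that window is $\int_{kT_0}^{(k+1)T_0}\|\mathcal{K}^{1/2}(g)\|^2dt$ for the free solution issued from $f_0$, whose size is governed by $\|f_0\|_0^2$ (the free flow does not decay), not by $\|f(kT_0)\|_0^2$. Monotonicity of $\|f(t)\|$ gives no lower bound on $\|f(kT_0)\|_0$ in terms of $\|f_0\|_0$ --- if anything $f$ decays --- so summing the windows gives $\sum_k\|f(kT_0)\|_0^2\lesssim\int_0^T\|\mathcal{K}^{1/2}(f)\|^2dt$, which does not dominate $\int_0^T\|\mathcal{K}^{1/2}(g)\|^2dt$. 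The paper avoids windows altogether: it runs the Gronwall comparison on all of $[0,T]$ at once, obtains $\int_0^T\|\mathcal{K}^{1/2}_{\epsilon,1}(g)\|^2dt\leq C(T,\epsilon)\int_0^T\bigl(\|\mathcal{K}^{1/2}_{\epsilon,1}(f)\|^2+2\|\mathcal{K}^{1/2}_{\epsilon,2}(f)\|\|\mathcal{K}^{1/2}_{\epsilon,2}(f-g)\|\bigr)dt$, converts the left side into $C(\epsilon)\int_0^T\|\mathcal{K}^{1/2}(g)\|^2dt$ via $(\ref{Lemma3D2})$, $(\ref{Lemma3D4})$, $(\ref{Lemma3D4post})$, $(\ref{Lemma3D5pre})$ (inequality $(\ref{Lemma3D8})$), and closes with the $\delta$-absorption. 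If you are willing to accept a $T$-dependent constant (as the paper's own final constant in fact is), the cleaner exit from your Stage 1 would be to bound $\int_0^T\|\mathcal{K}^{1/2}(g)\|^2dt\leq C(\mathcal{K})^2\,T\,\sup_t\|g(t)\|_0^2$ and invoke Stage 1 once on $[0,T_0]$; but as written, the window-by-window iteration on the damped solution is the step that fails.
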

\begin{proof} Similar as in the previous lemma
\begin{eqnarray*}
\partial_t\|f-g\|^2 & = & -2<\mathcal{K}^{1/2}_{\epsilon,1}(f),\mathcal{K}_{\epsilon,1}^{1/2}(f-g)>-2<\mathcal{K}^{1/2}_{\epsilon,2}(f),\mathcal{K}_{\epsilon,2}^{1/2}(f-g)>\\
& \leq & \| \mathcal{K}^{1/2}_{\epsilon,1}(f)\|^2+ \|\mathcal{K}_{\epsilon,1}^{1/2}(f-g)\|^2-2<\mathcal{K}^{1/2}_{\epsilon,2}(f),\mathcal{K}_{\epsilon,2}^{1/2}(f-g)>  \\
& \leq & \| \mathcal{K}^{1/2}_{\epsilon,1}(f)\|^2+ C_1(\epsilon)^2\|f-g\|^2+2\|\mathcal{K}^{1/2}_{\epsilon,2}(f)\|\|\mathcal{K}_{\epsilon,2}^{1/2}(f-g)\|, \end{eqnarray*}
the last inequality follows from $(\ref{Lemma3D3})$. Gronwall's inequality deduces
\begin{eqnarray*}
\|f-g\|^2(t)& \leq &\int_0^t(\| \mathcal{K}^{1/2}_{\epsilon,1}(f)\|^2+2\|\mathcal{K}^{1/2}_{\epsilon,2}(f)\|\|\mathcal{K}_{\epsilon,2}^{1/2}(f-g)\|)\exp(C_1(\epsilon)^2(t-s))ds \\
& \leq &\exp(C_1(\epsilon)^2t)\int_0^t(\| \mathcal{K}^{1/2}_{\epsilon,1}(f)\|^2+2\|\mathcal{K}^{1/2}_{\epsilon,2}(f)\|\|\mathcal{K}_{\epsilon,2}^{1/2}(f-g)\|)ds. 
\end{eqnarray*}
The previous inequality implies
\begin{equation}\label{Lemma3D6}
\int_0^T\|f-g\|^2dt \leq T\exp(C_1(\epsilon)^2T)\int_0^T(\| \mathcal{K}^{1/2}_{\epsilon,1}(f)\|^2+2\|\mathcal{K}^{1/2}_{\epsilon,2}(f)\|\|\mathcal{K}_{\epsilon,2}^{1/2}(f-g)\|)dt, 
\end{equation}
for any $T>T_0$.
The two inequalities $(\ref{Lemma3D3})$ and $(\ref{Lemma3D6})$ lead to
\begin{eqnarray*}
& &\int_0^T\| \mathcal{K}^{1/2}_{\epsilon,1}(f-g)\|^2dt \\\nonumber
& \leq &TC_1(\epsilon)^2\exp(C_1(\epsilon)^2T)\int_0^T(\| \mathcal{K}^{1/2}_{\epsilon,1}(f)\|^2+2\|\mathcal{K}^{1/2}_{\epsilon,2}(f)\|\|\mathcal{K}_{\epsilon,2}^{1/2}(f-g)\|)dt. 
\end{eqnarray*} 
Apply the triangle inequality to the previous inequality to get
\begin{eqnarray}\label{Lemma3D7}
& &\int_0^T\| \mathcal{K}^{1/2}_{\epsilon,1}(g)\|^2dt \\\nonumber
& \leq & 2(TC_1(\epsilon)^2\exp(C_1(\epsilon)^2T)+1)\int_0^T(\| \mathcal{K}^{1/2}_{\epsilon,1}(f)\|^2+2\|\mathcal{K}^{1/2}_{\epsilon,2}(f)\|\|\mathcal{K}_{\epsilon,2}^{1/2}(f-g)\|)dt. 
\end{eqnarray}
The there inequalities $(\ref{Lemma3D4})$, $(\ref{Lemma3D4post})$ and $(\ref{Lemma3D5pre})$ imply that for $\epsilon$ small enough
\begin{equation}\label{Lemma3D8}
\int_0^T\| \mathcal{K}^{1/2}_{\epsilon,1}(g)\|^2dt\geq C(\epsilon) \int_0^T\| \mathcal{K}^{1/2}(g)\|^2dt,
\end{equation}
where $C(\epsilon)$ is a positive constant depending on $\epsilon$. 
\\ Combine $(\ref{Lemma3D7})$ and $(\ref{Lemma3D8})$ to get
\begin{eqnarray}\label{Lemma3D9a}
& & \frac{C(\epsilon)}{2(TC_1(\epsilon)^2\exp(C_1(\epsilon)^2T)+1) } \int_0^T\| \mathcal{K}^{1/2}(g)\|^2dt\\\nonumber
& \leq & \int_0^T(\| \mathcal{K}^{1/2}_{\epsilon,1}(f)\|^2+2\|\mathcal{K}^{1/2}_{\epsilon,2}(f)\|\|\mathcal{K}_{\epsilon,2}^{1/2}(f-g)\|)dt. 
\end{eqnarray}
Since for any positive constant $\delta$
\begin{eqnarray*}
& & \int_0^T(\| \mathcal{K}^{1/2}_{\epsilon,1}(f)\|^2+2\|\mathcal{K}^{1/2}_{\epsilon,2}(f)\|\|\mathcal{K}_{\epsilon,2}^{1/2}(f-g)\|)dt\\
&
\leq &\int_0^T\left(\| \mathcal{K}_{\epsilon,1}^{1/2}(f)\|^2+\frac{1}{\delta}\|\mathcal{K}^{1/2}_{\epsilon,2}(f)\|^2+\delta\|\mathcal{K}_{\epsilon,2}^{1/2}(f-g)\|^2\right)dt\\
&\leq &\left(1+2\delta+\frac{1}{\delta}\right)\int_0^T\| \mathcal{K}^{1/2}(f)\|^2dt+2\delta\int_0^T\|\mathcal{K}^{1/2}(g)\|^2dt,
\end{eqnarray*}
Inequality $(\ref{Lemma3D9a})$ leads to
\begin{eqnarray*}
& &\left(\frac{C(\epsilon)}{2(TC_1(\epsilon)^2\exp(C_1(\epsilon)^2T)+1)}-2\delta \right)\int_0^T\| \mathcal{K}^{1/2}(g)\|^2dt\\
&\leq&\left(1+2\delta+\frac{1}{\delta}\right)\int_0^T\| \mathcal{K}^{1/2}(f)\|^2dt, 
\end{eqnarray*}
which implies $(\ref{Lemma3D5pre})$ for $\delta$ small enough.
\end{proof}
\begin{remark} Lemma $\ref{Lemma1}$ will be used later for the case of Goldstein-Taylor and related models, while Lemma $\ref{Lemma3}$ will be used for the linearized Boltzmann equation.
\end{remark}
\begin{lemma}\label{Lemma2}
Suppose that $\mathcal{K}$ satisfies the conditions in Lemmas $\ref{Lemma1}$ or $\ref{Lemma3}$ and that there exist positive numbers $T_0$ and $C$ such that
\begin{equation}\label{Observe}
\int_0^{T_0}\|\mathcal{K}^{1/2}(g)\|^2dt\geq C\|f_0\|^2,
\end{equation}
then there exist positive numbers $T_1$ and $\delta$ such that for all $t\geq T_1$
\begin{equation}\label{ExDecay}
\|f(t)\|\leq \exp(-\delta t)\|f_0\|.
\end{equation}
Moreover, $(\ref{ExDecay})$ also leads to $(\ref{Observe})$.
\end{lemma}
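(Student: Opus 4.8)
The plan is to derive from the observability inequality $(\ref{Observe})$ a genuine decay estimate for the inhomogeneous equation $(\ref{EquationAB})$ via a standard semigroup/iteration argument, and then run the converse direction. First I would invoke Lemma \ref{Lemma1} (or Lemma \ref{Lemma3}, depending on which hypothesis on $\mathcal{K}$ is in force) to transfer the lower bound $(\ref{Observe})$, which is stated for the homogeneous solution $g$, onto the inhomogeneous solution $f$: there is a positive constant $M_1$ with
\begin{equation*}
M_1 \int_0^{T_0} \|\mathcal{K}^{1/2}(g)\|^2\,dt \leq \int_0^{T_0}\|\mathcal{K}^{1/2}(f)\|^2\,dt,
\end{equation*}
so combining with $(\ref{Observe})$ gives $\int_0^{T_0}\|\mathcal{K}^{1/2}(f)\|^2\,dt \geq C M_1 \|f_0\|^2$. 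On the other hand, since $<\mathcal{A}(x),x>=0$ and $\mathcal{K}$ is self-adjoint nonnegative, the energy identity for $(\ref{EquationAB})$ reads $\frac{d}{dt}\|f\|^2 = -2\|\mathcal{K}^{1/2}(f)\|^2$, hence $t \mapsto \|f(t)\|^2$ is nonincreasing and integrating over $[0,T_0]$ yields $\|f(0)\|^2 - \|f(T_0)\|^2 = 2\int_0^{T_0}\|\mathcal{K}^{1/2}(f)\|^2\,dt \geq 2CM_1\|f_0\|^2$.

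From this one obtains the single-step contraction $\|f(T_0)\|^2 \leq (1 - 2CM_1)\|f_0\|^2$, and one checks $0 < 1 - 2CM_1 < 1$ (if $CM_1 \geq 1/2$ the decay is immediate; otherwise set $\theta := 1-2CM_1 \in (0,1)$). Next I would use the fact that $(\ref{EquationAB})$ is autonomous, so the same inequality applies on every interval $[kT_0,(k+1)T_0]$ with $f(kT_0)$ in place of $f_0$ — here it is implicit that the observability inequality $(\ref{Observe})$, being a property of the operators $\mathcal{A},\mathcal{K}$ and not of the particular datum, holds with the same $T_0, C$ for arbitrary initial data. Iterating gives $\|f(kT_0)\|^2 \leq \theta^k \|f_0\|^2$, and since $\|f(t)\|$ is nonincreasing, for $t \in [kT_0,(k+1)T_0]$ we get $\|f(t)\|^2 \leq \theta^k\|f_0\|^2 \leq \theta^{-1}\theta^{t/T_0}\|f_0\|^2$. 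Choosing $\delta := -\frac{1}{2T_0}\log\theta > 0$ and $T_1 := T_0$ (absorbing the harmless factor $\theta^{-1/2}$ by enlarging $T_1$ if one wants the clean bound $(\ref{ExDecay})$ without a prefactor) yields $\|f(t)\| \leq \exp(-\delta t)\|f_0\|$ for $t \geq T_1$.

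For the converse ("Moreover, $(\ref{ExDecay})$ also leads to $(\ref{Observe})$"), suppose $\|f(t)\| \leq e^{-\delta t}\|f_0\|$ for $t \geq T_1$. Pick $T_0 > T_1$ large enough that $e^{-2\delta T_0} \leq 1/2$; then the energy identity gives $2\int_0^{T_0}\|\mathcal{K}^{1/2}(f)\|^2\,dt = \|f_0\|^2 - \|f(T_0)\|^2 \geq (1 - e^{-2\delta T_0})\|f_0\|^2 \geq \tfrac12\|f_0\|^2$, i.e.\ the observability inequality holds for $f$; then Lemma \ref{Lemma0} (which bounds $\int_0^{T_0}\|\mathcal{K}^{1/2}(f)\|^2$ above by $\int_0^{T_0}\|\mathcal{K}^{1/2}(g)\|^2$) upgrades this to the stated inequality $(\ref{Observe})$ for $g$. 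The main obstacle — really the only subtle point — is the uniformity issue: the iteration step requires $(\ref{Observe})$ to hold for every shifted datum $f(kT_0)$ with constants independent of $k$, which is fine because $(\ref{Observe})$ is a structural estimate on the homogeneous flow, but it should be stated carefully; everything else is the Haraux-type energy bookkeeping combined with the comparison Lemmas \ref{Lemma0}, \ref{Lemma1}, \ref{Lemma3} already established.
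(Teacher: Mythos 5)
Your proposal is correct and rests on the same three ingredients as the paper's proof -- the energy dissipation identity, the comparison lemmas transferring the damping integral between the free flow $g$ and the damped flow $f$, and an iteration over time intervals of length $T_0$ with the implicit (and correctly flagged) assumption that $(\ref{Observe})$ holds uniformly in the initial datum -- but the way you set up the iteration differs from the paper. You apply Lemma $\ref{Lemma1}$ (or $\ref{Lemma3}$) on each fixed window $[kT_0,(k+1)T_0]$ to get a uniform one-step contraction $\|f((k+1)T_0)\|^2\leq\theta\,\|f(kT_0)\|^2$ with $\theta=1-2CM_1$, whereas the paper takes $T=kT_0$, uses a pigeonhole argument to locate one subinterval $[pT_0,(p+1)T_0]$ on which the dissipation is at most $\|f(0)\|/k$, applies the observability to the free flow restarted at $f(pT_0)$, and obtains the contraction factor $1/(Ck)$ over the long interval $[0,kT_0]$ for $k$ large, before iterating; both routes then conclude by monotonicity of $\|f(t)\|$ and absorption of a constant prefactor. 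Your version is the more standard ``observability implies stabilization'' argument and is arguably cleaner, since it needs no pigeonhole and produces an explicit $\theta$; the paper's version has the cosmetic advantage of not needing to check that $2CM_1\leq1$ (your remark that this case is anyway vacuous, or gives immediate decay, is an acceptable substitute). You also pair the comparison lemmas the right way around: Lemma $\ref{Lemma1}$/$\ref{Lemma3}$ to pass the lower bound from $g$ to $f$ in the forward direction, and Lemma $\ref{Lemma0}$ to pass it from $f$ back to $g$ in the converse -- the paper's text cites them in the opposite roles, which is evidently a slip, and your converse argument (choose $T_0$ with $e^{-2\delta T_0}\leq 1/2$, use the energy identity, then Lemma $\ref{Lemma0}$) is exactly the intended one. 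One small repair: with $\delta:=-\tfrac{1}{2T_0}\log\theta$ the bound you obtain is $\|f(t)\|\leq\theta^{-1/2}e^{-\delta t}\|f_0\|$, and the prefactor $\theta^{-1/2}>1$ cannot be removed by enlarging $T_1$ alone; you must also shrink the rate (e.g.\ replace $\delta$ by $\delta/2$ and then take $T_1$ large), which is precisely what the paper does when it passes from $\delta_*$ to $\delta_*/2$.
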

\begin{proof}
{\\\bf Step 1:} $(\ref{Observe})$ leads to $(\ref{ExDecay})$.
\\ Choose $T=kT_0$, where $k$ is a positive integer. Since 
$$\|f(0)\|-\|f(T)\|=\int_0^T\|\mathcal{K}^{1/2}(f)\|^2 dt,$$
there exists $p$ in $\{0,\dots,k-1\}$ such that
$$\frac{\|f(0)\|}{k}\geq \int_{pT_0}^{(p+1)T_0}\|\mathcal{K}^{1/2}(f)\|^2 dt.$$
Let $h$ be the solution of
$$\partial_th+A(h)=0,$$
with $h(0)=f(pT_0).$ Inequality $(\ref{Observe})$ implies that
$$\int_{0}^{T_0}\|\mathcal{K}^{1/2}(h)\|^2dt\geq C\|f(pT_0)\|,$$
which together with Lemma $\ref{Lemma0}$ deduces 
$$\int_{0}^{T_0}\|\mathcal{K}^{1/2}(f)\|^2dt\geq C\|f(pT_0)\|.$$
This leads to
$$\|f(kT_0)\|\leq\|f(pT_0)\|\leq \frac{1}{Ck}\|f(0)\|,$$
where $C$ is some positive constant,since 
$$\partial_t\|f\|^2=-2<\mathcal{K}^{1/2}f, \mathcal{K}^{1/2}f>,$$
or $\|f\|$ is decreasing; for $k$ large enough. The previous inequality implies 
$$\|f(T_*)\|\leq \exp(-\delta_*T_*)\|f(0)\|,$$
where $T_*=kT_0$ and $\delta_*=\frac{\ln({Ck})}{T_*}$. Then for $t\in[T_*,2T_*)$,
$$\|f(t)\|\leq \|f(T_*)\|\leq\exp(-\frac{\delta_*}{2}2T_*)\|f(0)\|\leq \exp(-\frac{\delta_*}{2}t)\|f(0)\|.$$
This leads to 
$$\|f(t)\|\leq \exp(-\frac{\delta_*}{2}(t-T_*))\|f(T_*)\|\leq \exp(-\frac{\delta_*}{2}t)\|f(0)\|,$$
for $t\in[2T_*,3T_*)$. An induction argument leads to the exponential decay $(\ref{ExDecay})$ with $\delta=\frac{\delta_*}{2}$.
{\\\bf Step 2:} $(\ref{ExDecay})$ leads to $(\ref{Observe})$.
\\ Inequality $(\ref{ExDecay})$ deduces that there exist constants $C<1$ and $T_*>0$ such that for $T>T_*$
$$\|f(0\|-\|f(T)\|=\int_0^T\|\mathcal{K}^{1/2}(f)\|^2dt\geq C \|f(0)\|^2.$$ Lemmas $\ref{Lemma1}$ and $\ref{Lemma3}$ imply that
$$\int_0^T\|\mathcal{K}^{1/2}(g)\|^2dt\geq C \|f(0)\|^2.$$
\end{proof}
We also recall Lemma 4.4 in \cite{AmmariTucsnak:2001:SSO}, for a proof of this lemma we refer to \cite{AmmariTucsnak:2000:SBE} and \cite{JaffardTucsnakZuazua:1998:SIS}.
\begin{lemma}\label{LemmaAmmari}
Let $\{\mathcal{E}_k\}$ be a sequence of positive real numbers satisfying 
$$\mathcal{E}_{k+1}\leq\mathcal{E}_k-C\mathcal{E}^{2+\zeta}_{k+1},\forall k\geq0,$$
where $C>0$ and $\zeta>-1$ are constants. Then there exists a positive constant $M$, such that
$$\mathcal{E}_k\leq \frac{M}{(k+1)^{\frac{1}{1+\zeta}}},k\geq0.$$
\end{lemma}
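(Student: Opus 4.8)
The plan is to prove this purely by elementary real-analysis manipulation of the recursion, without any reference to the kinetic setting; this is a standard lemma about decaying sequences. The statement to prove is that $\mathcal{E}_{k+1}\le\mathcal{E}_k-C\mathcal{E}_{k+1}^{2+\zeta}$ for all $k\ge 0$ forces $\mathcal{E}_k\le M(k+1)^{-1/(1+\zeta)}$. Since the right-hand side exponent is $\theta:=1/(1+\zeta)>0$ (because $\zeta>-1$), the natural move is to pass to the reciprocal powers: set $u_k=\mathcal{E}_k^{-1/\theta}=\mathcal{E}_k^{-(1+\zeta)}$, so that $2+\zeta=(1+\zeta)+1=\theta^{-1}+1$, and try to show $u_k$ grows at least linearly, $u_k\ge c\,k$, which is exactly $\mathcal{E}_k\le c^{-\theta}k^{-\theta}$.

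First I would rewrite the hypothesis as $\mathcal{E}_{k+1}(1+C\mathcal{E}_{k+1}^{1+\zeta})\le\mathcal{E}_k$, equivalently $\mathcal{E}_{k+1}^{-1}\ge\mathcal{E}_k^{-1}(1+C\mathcal{E}_{k+1}^{1+\zeta})$. Dividing the original inequality by $\mathcal{E}_k\mathcal{E}_{k+1}$ gives the cleaner form
\begin{equation*}
\frac{1}{\mathcal{E}_{k+1}}-\frac{1}{\mathcal{E}_k}\ge C\,\frac{\mathcal{E}_{k+1}^{2+\zeta}}{\mathcal{E}_k\mathcal{E}_{k+1}}=C\,\mathcal{E}_{k+1}^{1+\zeta}\,\frac{1}{\mathcal{E}_k}\ge C\,\mathcal{E}_{k+1}^{1+\zeta}\,\frac{1}{\mathcal{E}_{k+1}}=C\,\mathcal{E}_{k+1}^{\zeta},
\end{equation*}
where I used $\mathcal{E}_k\ge\mathcal{E}_{k+1}$ (monotonicity, immediate from the hypothesis since $C\mathcal{E}_{k+1}^{2+\zeta}\ge 0$). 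When $\zeta\ge 0$ this is not yet quite the telescoping I want, so instead I would work directly with the sequence $v_k:=\mathcal{E}_k^{-(1+\zeta)}$: by the mean value theorem applied to the function $x\mapsto x^{-(1+\zeta)}$ on $[\mathcal{E}_{k+1},\mathcal{E}_k]$, and again using $\mathcal{E}_{k+1}\le\mathcal{E}_k$,
\begin{equation*}
v_{k+1}-v_k=\mathcal{E}_{k+1}^{-(1+\zeta)}-\mathcal{E}_k^{-(1+\zeta)}\ge (1+\zeta)\,\mathcal{E}_k^{-(2+\zeta)}\,(\mathcal{E}_k-\mathcal{E}_{k+1})\ge(1+\zeta)\,C\,\frac{\mathcal{E}_{k+1}^{2+\zeta}}{\mathcal{E}_k^{2+\zeta}}.
\end{equation*}
The main obstacle is precisely controlling the ratio $\mathcal{E}_{k+1}/\mathcal{E}_k$ from below: a priori it could degenerate. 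I would handle this by a dichotomy at each step. If $\mathcal{E}_{k+1}\ge\tfrac12\mathcal{E}_k$, the display above gives $v_{k+1}-v_k\ge(1+\zeta)C\,2^{-(2+\zeta)}=:c_0>0$, a uniform increment. If instead $\mathcal{E}_{k+1}<\tfrac12\mathcal{E}_k$, then $v_{k+1}=\mathcal{E}_{k+1}^{-(1+\zeta)}>2^{1+\zeta}\mathcal{E}_k^{-(1+\zeta)}=2^{1+\zeta}v_k>2v_k$, which together with $v_0>0$ forces, over the full history, $v_k$ to be at least of order $k$ as well (a geometric jump is even better than a linear one). Combining the two cases by an induction on $k$ — in the "good" case add $c_0$, in the "jump" case at least double — yields $v_k\ge c_1(k+1)$ for a constant $c_1>0$ depending only on $\zeta$, $C$ and $v_0=\mathcal{E}_0^{-(1+\zeta)}$.

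Finally, unwinding $v_k=\mathcal{E}_k^{-(1+\zeta)}\ge c_1(k+1)$ gives $\mathcal{E}_k\le (c_1(k+1))^{-1/(1+\zeta)}=M(k+1)^{-1/(1+\zeta)}$ with $M=c_1^{-1/(1+\zeta)}$, which is the claim. The bookkeeping of the dichotomy/induction is the only place requiring care; everything else is the standard "invert and telescope" trick for sub-polynomially decaying sequences. Since this lemma is quoted from \cite{AmmariTucsnak:2001:SSO} with references to \cite{AmmariTucsnak:2000:SBE} and \cite{JaffardTucsnakZuazua:1998:SIS}, one may alternatively simply cite those sources for the detailed proof.
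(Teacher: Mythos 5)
The paper does not actually prove this lemma: it is quoted as Lemma 4.4 of the cited Ammari--Tucsnak paper, with the proof delegated to the references, so there is no internal argument to compare yours against; supplying the standard ``invert and telescope'' proof, as you do, is a legitimate and genuinely self-contained alternative. Your argument is correct in substance: with $v_k=\mathcal{E}_k^{-(1+\zeta)}$, monotonicity of $\mathcal{E}_k$ and the mean value theorem give $v_{k+1}-v_k\ge(1+\zeta)C\,(\mathcal{E}_{k+1}/\mathcal{E}_k)^{2+\zeta}$, and the dichotomy on the ratio $\mathcal{E}_{k+1}/\mathcal{E}_k$ yields linear growth of $v_k$, hence the stated decay with $M$ depending on $C$, $\zeta$ and $\mathcal{E}_0$. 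Two small repairs are needed. First, in your preliminary display the final inequality is reversed: since $\mathcal{E}_k\ge\mathcal{E}_{k+1}$ one has $\mathcal{E}_{k+1}^{1+\zeta}/\mathcal{E}_k\le\mathcal{E}_{k+1}^{\zeta}$, not $\ge$; this is harmless because you abandon that computation anyway. Second, in the jump case the factor is $2^{1+\zeta}$, which is at least $2$ only when $\zeta\ge 0$; for $-1<\zeta<0$ you only get $v_{k+1}\ge 2^{1+\zeta}v_k$ with $1<2^{1+\zeta}<2$, so ``at least double'' is not literally true. The fix is the one your wording already hints at: $v_k$ is nondecreasing, so $v_k\ge v_0$ for all $k$, and the multiplicative jump therefore gives the additive gain $v_{k+1}-v_k\ge(2^{1+\zeta}-1)v_0$; the induction then yields $v_k\ge\min\{v_0,\,c_0,\,(2^{1+\zeta}-1)v_0\}\,(k+1)$ and hence the claim. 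With these adjustments the proof is complete; alternatively, as you note, one may simply cite the references, which is exactly what the paper does.
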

\section{Decay rates of the Goldstein-Taylor model}
Consider the following system:
\begin{equation}\label{TransportSystem}
\left\{ \begin{array}{ll} \frac{\partial\varphi}{\partial t}+\frac{\partial\varphi}{\partial x}=0,\vspace{.1in}\\
\frac{\partial\phi}{\partial t}-\frac{\partial\phi}{\partial x}=0,\end{array}\right.
\end{equation}
where $\varphi:=\varphi(t,x)$, $\phi:=\phi(t,x)$, $x\in\mathbb{T}=\mathbb{R}/\mathbb{Z}$, $t\geq 0$, with the initial condition 
\begin{equation}\label{TransportSysteminitial}
\varphi(0,x)=\varphi_0(x), ~~~ \phi(0,x)=\phi_0(x).
\end{equation}
Then asymptotic profile and the energy  of the system are  then
\begin{equation}\label{AsymProfilephi}
(\varphi_\infty,\phi_\infty)=\left(\frac{1}{2}\int_{\mathbb{T}}(\varphi_0+\phi_0)dx,\frac{1}{2}\int_{\mathbb{T}}(\varphi_0+\phi_0)dx\right),
\end{equation}
and 
\begin{equation}\label{Energyphi}
H_\varphi(t)=\int_{\mathbb{T}}[(\varphi-\varphi_\infty)^2+(\phi-\phi_\infty)^2]dx.
\end{equation}
The following proposition is a consequence of Lemmas $\ref{Lemma0}$, $\ref{Lemma1}$ and $\ref{Lemma2}$.
\begin{proposition}\label{TheoremExponentialDecays}
Suppose that there exist positive numbers $T_0$ and $\delta$ such that
\begin{equation}\label{ExpoDecay}
\forall t\geq T_0, \forall u_0, v_0 \in W^{1,1}(\mathbb{T}): ~~~~ H_u(t)\leq \exp(-\delta t) H_u(0),
\end{equation}
then there exist a positive number $T_1$ and a nonnegative number $C$ such that
\begin{equation}\label{observability}
\int_0^{T_1}\int_{\mathbb{T}}\sigma (\varphi-\phi)^2 dxdt\geq C\int_{\mathbb{T}}[(\varphi-\varphi_\infty)^2+(\phi-\phi_\infty)^2]dx,
\end{equation} 
for $\varphi_0=u_0$ and $\phi_0=v_0$.
\\Moreover, if there exist $T_1$ and $C$ such that $(\ref{observability})$ satisfies, then there exist $T_0$ and $\delta$ such that $(\ref{ExpoDecay})$ is true.
\end{proposition}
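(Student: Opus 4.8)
The plan is to recognize this proposition as the concrete instance of Lemma \ref{Lemma2} (which itself rests on Lemmas \ref{Lemma0} and \ref{Lemma1}) obtained by casting the two systems into the abstract framework of Section 3. I would take $H$ to be the subspace of $L^2(\mathbb{T})\times L^2(\mathbb{T})$ on which $\int_{\mathbb{T}}(a+b)\,dx=0$, equipped with the product $L^2$ inner product, and set $\mathcal{A}(a,b)=(\partial_x a,-\partial_x b)$ and $\mathcal{K}(a,b)=\sigma\,(a-b)\,(1,-1)$. First I would check the structural assumptions for this choice: integration by parts on $\mathbb{T}$ gives $\langle\mathcal{A}(a,b),(a,b)\rangle=0$; $\mathcal{K}$ is linear, self-adjoint and nonnegative, with square root $\mathcal{K}^{1/2}(a,b)=\tfrac{1}{\sqrt 2}\sqrt{\sigma}\,(a-b)\,(1,-1)$, so that $\|\mathcal{K}^{1/2}(a,b)\|^{2}=\int_{\mathbb{T}}\sigma\,(a-b)^{2}\,dx$ and $\langle\mathcal{K}(a,b),(c,d)\rangle=\langle\mathcal{K}^{1/2}(a,b),\mathcal{K}^{1/2}(c,d)\rangle$; and $\mathcal{K}^{1/2}$ is bounded on $H$, which is exactly the hypothesis of Lemma \ref{Lemma1} (hence of Lemma \ref{Lemma2}). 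I would also record that the free transport flow on $\mathbb{T}$ preserves the $H$-norm, i.e. requirement $(\ref{Lemma3D0})$.

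Next comes the dictionary with the abstract equations $(\ref{EquationAB})$ and $(\ref{EquationA})$. Writing $f:=(u-u_\infty,v-v_\infty)$, the equal constants $u_\infty=v_\infty$ cancel in the transport terms and reproduce the relaxation term unchanged, so $(\ref{G-T})$--$(\ref{G-Tinitial})$ becomes $(\ref{EquationAB})$ with $f_0=(u_0-u_\infty,v_0-v_\infty)$; likewise $g:=(\varphi-\varphi_\infty,\phi-\phi_\infty)$ solves $(\ref{EquationA})$, and when $\varphi_0=u_0$, $\phi_0=v_0$ one has $\varphi_\infty=u_\infty$, so $g(0)=f_0$. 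Under this dictionary $\|f(t)\|^{2}=H_u(t)$, $\|\mathcal{K}^{1/2}(g(t))\|^{2}=\int_{\mathbb{T}}\sigma\,(\varphi-\phi)^{2}\,dx$, and by norm conservation of free transport $\int_{\mathbb{T}}[(\varphi-\varphi_\infty)^{2}+(\phi-\phi_\infty)^{2}]\,dx=\|g(t)\|^{2}=\|f_0\|^{2}$ for all $t$; in particular the right-hand side of $(\ref{observability})$ equals $\|f_0\|^{2}$.

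With these identifications, $(\ref{ExpoDecay})$ is precisely $(\ref{ExDecay})$ (take square roots; the rate becomes $\delta/2$, and the times $t<T_0$ cost only a constant since $\|f\|$ is nonincreasing), and $(\ref{observability})$ is precisely $(\ref{Observe})$ with $T_0:=T_1$. Lemma \ref{Lemma2} asserts $(\ref{Observe})\Leftrightarrow(\ref{ExDecay})$: its Step 2 (using Lemmas \ref{Lemma0} and \ref{Lemma1}) gives $(\ref{ExpoDecay})\Rightarrow(\ref{observability})$, and its Step 1 gives the converse, which finishes the proof. Since every estimate in sight involves only $L^2$ norms with constants independent of higher norms, the quantification over $W^{1,1}(\mathbb{T})$ data causes no trouble: by density of $W^{1,1}(\mathbb{T})$ in $L^2(\mathbb{T})$ and continuity in $L^2$ one may work with arbitrary $L^2$ data wherever the argument requires it, for instance when restarting the evolution at $f(pT_0)$ inside Step 1 of Lemma \ref{Lemma2}.

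The one point I expect to require genuine care, beyond this bookkeeping, is the boundedness of $\mathcal{K}^{1/2}$ demanded by Lemma \ref{Lemma1}: it holds when $\sigma\in L^\infty$, whereas Theorem \ref{TheoremTransportexponentialdecay1} allows $\sigma\in L^2$ only — but that gap is closed not here but in the proof of Theorem \ref{TheoremTransportexponentialdecay1}, by truncating $\sigma$ from above and using that $(\ref{observability})$ can only become easier when $\sigma$ is enlarged. One must also mind the factor $2$ relating the energy $H_u$, which is a squared norm, to the norm of $H$, and keep the free transport semigroup facts (norm conservation and, if needed, $L^\infty$ propagation) at hand.
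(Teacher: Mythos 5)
Your proposal is correct and follows exactly the route the paper intends: the paper gives no separate argument for this proposition beyond declaring it a consequence of Lemmas \ref{Lemma0}, \ref{Lemma1} and \ref{Lemma2}, and your explicit choice of $H$, $\mathcal{A}$, $\mathcal{K}$ with $\|\mathcal{K}^{1/2}(a,b)\|^2=\int_{\mathbb{T}}\sigma(a-b)^2dx$ is precisely the instantiation that makes $(\ref{ExpoDecay})\Leftrightarrow(\ref{observability})$ match $(\ref{ExDecay})\Leftrightarrow(\ref{Observe})$. Your added remarks (norm conservation of free transport, the $W^{1,1}$ density point, and the $L^\infty$ issue for boundedness of $\mathcal{K}^{1/2}$ when $\sigma\in L^2$) only make explicit details the paper leaves implicit.
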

Theorem $\ref{TheoremTransportexponentialdecay1}$ is a direct consequence of Proposition $\ref{TheoremExponentialDecays}$ and the following proposition.
\begin{proposition}\label{ProproEnergy-DampingwithweightH1} There exists a positive constant $T_0$ such that for $T>T_0$
\begin{equation}\label{ObservabilityWithWeightH1}
\int_0^T\int_{\mathbb{T}}\sigma(\varphi-\phi)^2dxdt \geq C(T)\int_{\mathbb{T}}[(\varphi_0-\varphi_\infty)^2+(\phi_0-\phi_\infty)^2]dx.
\end{equation}
\end{proposition}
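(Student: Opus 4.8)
Since $\varphi,\phi$ solve the free transport system $(\ref{TransportSystem})$, they are explicit: $\varphi(t,x)=\varphi_0(x-t)$, $\phi(t,x)=\phi_0(x+t)$, so the left side of $(\ref{ObservabilityWithWeightH1})$ is the concrete quantity
\[
J_T:=\int_0^T\int_{\mathbb{T}}\sigma(x)\big(\varphi_0(x-t)-\phi_0(x+t)\big)^2\,dx\,dt ,
\]
which is nondecreasing in $T$. Hence it is enough to prove the estimate for $T$ a positive integer and deduce it for all $T\ge T_0:=1$. First I would reduce to mean-balanced data: subtracting the common constant $\varphi_\infty=\phi_\infty=\tfrac12\int_{\mathbb{T}}(\varphi_0+\phi_0)$ from both $\varphi_0$ and $\phi_0$ leaves $\varphi_0(x-t)-\phi_0(x+t)$ unchanged, hence leaves $J_T$ unchanged, and also leaves $\|\varphi_0-\varphi_\infty\|_{L^2}^2+\|\phi_0-\phi_\infty\|_{L^2}^2$ unchanged; so it suffices to prove $J_T\ge C(T)\big(\|\varphi_0\|_{L^2}^2+\|\phi_0\|_{L^2}^2\big)$ under the normalization $\int_{\mathbb{T}}(\varphi_0+\phi_0)=0$.

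Then I would compute $J_T$ exactly. Expanding the square, Tonelli's theorem gives $\int_0^T\int_{\mathbb{T}}\sigma(x)\varphi_0(x-t)^2\,dx\,dt=\int_{\mathbb{T}}\varphi_0(y)^2\big(\int_{y}^{y+T}\sigma\big)\,dy=T\widehat\sigma_0\|\varphi_0\|_{L^2}^2$ when $T\in\mathbb{N}$ (here $\widehat\sigma_m=\int_{\mathbb{T}}\sigma e^{-2\pi imx}\,dx$, $\widehat\sigma_0=\|\sigma\|_{L^1}$), and likewise for $\phi_0$. For the cross term I would pass to Fourier series $\varphi_0=\sum_ka_ke^{2\pi ikx}$, $\phi_0=\sum_kb_ke^{2\pi ikx}$ (so $b_0=-a_0$): integrating in $x$ and then in $t$, the identity $\int_0^Te^{2\pi i(l-k)t}\,dt=T\delta_{kl}$ for $T\in\mathbb{N}$ collapses the double sum, yielding $\int_0^T\int_{\mathbb{T}}\sigma\,\varphi_0(x-t)\phi_0(x+t)\,dx\,dt=T\sum_ka_kb_k\widehat\sigma_{-2k}$. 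Altogether, for every $T\in\mathbb{N}$,
\[
J_T=T\,Q,\qquad Q:=\sum_{k\in\mathbb{Z}}\Big(\widehat\sigma_0\big(|a_k|^2+|b_k|^2\big)-2\,\mathrm{Re}\,\big(a_kb_k\widehat\sigma_{-2k}\big)\Big).
\]
(The Fourier computation is literal for trigonometric polynomials and extends by continuity, both sides being bounded quadratic forms in $(\varphi_0,\phi_0)\in L^2(\mathbb{T})^2$.)

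The heart of the proof is to show $Q\ge\eta\big(\|\varphi_0\|_{L^2}^2+\|\phi_0\|_{L^2}^2\big)$ for a constant $\eta>0$ depending only on $\sigma$. The key lemma I would establish is that $\eta:=\inf_{m\neq0}\big(\widehat\sigma_0-|\widehat\sigma_m|\big)>0$. Indeed, since $\sigma\ge0$ we have $|\widehat\sigma_m|\le\int_{\mathbb{T}}\sigma=\widehat\sigma_0$, and equality for some $m\neq0$ would force $e^{-2\pi imx}$ to be constant on $\{\sigma>0\}$, which is impossible because $\{\sigma>0\}$ has positive measure ($\sigma\ge0$, $\sigma\not\equiv0$); thus $\widehat\sigma_0-|\widehat\sigma_m|>0$ for all $m\neq0$, and since $\widehat\sigma_m\to0$ by the Riemann--Lebesgue lemma ($\sigma\in L^2(\mathbb{T})\subset L^1(\mathbb{T})$) these numbers converge to $\widehat\sigma_0>0$ as $|m|\to\infty$, so their infimum is positive. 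Granting this, $2|a_kb_k|\le|a_k|^2+|b_k|^2$ gives, for $k\neq0$, a contribution to $Q$ at least $(\widehat\sigma_0-|\widehat\sigma_{-2k}|)(|a_k|^2+|b_k|^2)\ge\eta(|a_k|^2+|b_k|^2)$, while the $k=0$ contribution is $\widehat\sigma_0(a_0-b_0)^2=4\widehat\sigma_0a_0^2=2\widehat\sigma_0(a_0^2+b_0^2)\ge\eta(a_0^2+b_0^2)$ by the normalization $b_0=-a_0$; summing over $k$ and using Parseval proves the claim. Then $J_T\ge T\eta\big(\|\varphi_0\|_{L^2}^2+\|\phi_0\|_{L^2}^2\big)$ for $T\in\mathbb{N}$, hence $J_T\ge\lfloor T\rfloor\eta(\dots)\ge\eta(\dots)$ for every $T\ge1$; undoing the reduction gives $(\ref{ObservabilityWithWeightH1})$ with $T_0=1$ and $C(T)=\eta$.

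I expect the only genuinely delicate point to be the uniform gap $\eta>0$: pointwise positivity of the quadratic form $Q$ on the infinite-dimensional data space would be elementary, but promoting it to coercivity requires the gap $\widehat\sigma_0-|\widehat\sigma_m|$ to remain bounded away from $0$ as $|m|\to\infty$, and this is precisely where all three hypotheses on $\sigma$ are used ($\sigma\ge0$ for $|\widehat\sigma_m|\le\widehat\sigma_0$, $\sigma\in L^1$ for Riemann--Lebesgue, $\sigma\not\equiv0$ for strictness). No smoothness of $\sigma$ and no regularity of the data beyond $L^2$ enter, which is consistent with the hypotheses of Theorem~\ref{TheoremTransportexponentialdecay1}.
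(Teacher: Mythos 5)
Your proof is correct, and it follows the same overall strategy as the paper (explicit free solution, Fourier expansion in $x$, time integration over an integer horizon $T$), but it differs at exactly the point that matters. The paper's proof asserts that for integer $T$ \emph{all} off-diagonal time integrals (spatial modes $n\neq m$) vanish, which yields the clean identity $\int_0^T\int_{\mathbb{T}}\sigma(\varphi-\phi)^2 = T\int_{\mathbb{T}}\sigma\,\bigl(\sum_{n\neq0}(|a_n|^2+|b_n|^2)+|a_0-b_0|^2\bigr)$ and ends the argument; in fact the resonant pairs $m=-n$ do \emph{not} vanish (the time frequency $n+m$ is zero there), and these are precisely the cross terms $T\sum_k a_kb_k\widehat\sigma_{-2k}$ that you retain. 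Because you keep them, you need an extra ingredient the paper never states: the uniform Fourier gap $\eta=\inf_{m\neq0}\bigl(\widehat\sigma_0-|\widehat\sigma_m|\bigr)>0$, which you prove correctly from $\sigma\ge0$, $\sigma\not\equiv0$ (strict inequality for each $m$, since $e^{-2\pi imx}$ cannot be constant on a set of positive measure) together with Riemann--Lebesgue (uniformity as $|m|\to\infty$); the $k=0$ mode is handled by the mean normalization $b_0=-a_0$. So your route is the careful version of the paper's computation: what it costs is the additional gap lemma, and what it buys is a genuinely complete coercivity estimate valid for every $\sigma\in L^1$, $\sigma\ge0$, $\sigma\neq0$, with an explicit constant (you could even keep $C(T)=\lfloor T\rfloor\eta$, growing linearly in $T$, in the spirit of the paper's $T\int\sigma$). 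The reduction to integer $T$ by monotonicity and the extension from trigonometric polynomials by boundedness of both quadratic forms on $L^2\times L^2$ are both fine.
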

\begin{proof}
Since $(\varphi,\phi)$ is the solution of the system $(\ref{TransportSystem})$, 
$$(\varphi,\phi)=(\varphi_0(x- t),\phi_0(x+ t)).$$
 Write $\varphi_0$ and $\phi_0$ under the form of Fourier series:
\begin{eqnarray*}
\varphi_0(x)&=& \sum_{-\infty}^\infty\exp(in\pi x)a_n,\\
\phi_0(x)&=& \sum_{-\infty}^\infty\exp(in\pi x)b_n,
\end{eqnarray*}
then
\begin{eqnarray*}
\varphi_0(x-t)&=& \sum_{-\infty}^\infty\exp(in\pi (x-t))a_n,\\
\phi_0(x+t)&=& \sum_{-\infty}^\infty\exp(in\pi (x+t))b_n.
\end{eqnarray*}
Choose $T$ to be a positive integer, the previous formulas imply
\begin{eqnarray*}
& &\int_0^T\sigma(\varphi-\phi)^2dt\\
 &=& \int_0^T\sigma\left|\sum_{-\infty}^\infty\exp(in\pi x)(a_n\exp(-in\pi t)-b_n\exp(in\pi t))\right|^2dt\\
 &=&\lim_{M\to\infty}\left(\sum_{|n|<M}\int_0^T\sigma|a_n\exp(-in\pi t)-b_n\exp(in\pi t)|^2dt+\right.\\
 & &+\sum_{|n|,|m|<M,n\ne m}\exp(i(n-m)\pi x)\times\\
 & &\left.\times\int_0^T[a_n\exp(-in\pi t)-b_n\exp(in\pi t)]\overline{a_m\exp(-im\pi t)-b_m\exp(im\pi t)}dt\right)\\
  &=&\lim_{M\to\infty}\left(\sum_{|n|<M}\int_0^T\sigma|a_n\exp(-in\pi t)-b_n\exp(in\pi t)|^2dt+\right.\\
 & &+\sum_{|n|,|m|<M,n\ne m}\exp(i(n-m)\pi x)\times\\
 & &\left.\times\int_0^T[a_n\exp(-in\pi t)-b_n\exp(in\pi t)]\overline{a_m\exp(-im\pi t)-b_m\exp(im\pi t)}dt\right)\\
 &=&\lim_{M\to\infty}\sum_{|n|<M}\int_0^T\sigma|a_n\exp(-in\pi t)-b_n\exp(in\pi t)|^2dt\\
  &=&\sum_{n\in\mathbb{R},n\ne 0}T\sigma(|a_n|^2+|b_n|^2)+T\sigma|a_0-b_0|^2,
\end{eqnarray*}
which leads to
\begin{equation}\label{GTProof1}
\int_{\mathbb{T}}\int_0^T\sigma(\varphi-\phi)^2dtdx=T\int_{\mathbb{T}}\sigma dx\left(\sum_{n\in\mathbb{R},n\ne 0}(|a_n|^2+|b_n|^2)+|a_0-b_0|^2\right).
\end{equation}
Moreover, the right hand side of $(\ref{ObservabilityWithWeightH1})$ is equal to
\begin{equation}\label{GTProof2}
\int_{\mathbb{T}}[(\varphi_0-\varphi_\infty)^2+(\phi_0-\phi_\infty)^2]dx=\sum_{n\in\mathbb{R},n\ne 0}(|a_n|^2+|b_n|^2).
\end{equation}
Inequality $(\ref{ObservabilityWithWeightH1})$ follows by $(\ref{GTProof1})$ and $(\ref{GTProof2})$.
\end{proof}
\section{Decay rates of the non-homogeneous transport equation}
Consider the equation
\begin{equation}
\label{DS-transportsimplied}
\frac{\partial g}{\partial t}+v.\nabla g=0,
\end{equation}
with the initial condition
\begin{equation}
\label{DS-transportsimpliedinitial}
g(0,x,v)=g_0(x,v).
\end{equation}
The energy of $(\ref{DS-transportsimplied})$ is then defined
\begin{equation}
\label{Energyg}
E_g(t)=\int_{\mathbb{T}^d}\int_V|g-g_\infty|^2dvdx,
\end{equation}
where 
\begin{equation}
\label{AsymProfilef}
g_\infty=\int_{\mathbb{T}^d}\int_Vg_0(x,v)dxdv.
\end{equation}
The following proposition is a direct consequence of Lemmas $\ref{Lemma0}$, $\ref{Lemma1}$ and $\ref{Lemma2}$.
\begin{proposition}\label{PropositionTransportExponentialDecays}
Suppose that there exist positive numbers $T_0$ and $\delta$ such that
\begin{equation}\label{TransportExpoDecay}
\forall t\geq T_0, \forall f_0 \in L^{\infty}(\mathbb{T}^d\times V): ~~~~ H_f(t)\leq \exp(-\delta t) H_f(0),
\end{equation}
then there exist a positive number $T_1$ and a nonnegative number $C$ such that
\begin{equation}\label{Transportobservability}
\int_0^{T_1}\int_{\mathbb{T}^d}\int_V\sigma (g-\bar{g})^2 dvdxdt\geq C\int_{\mathbb{T}^d}\int_V(g_0-g_\infty)^2dvdx,
\end{equation} 
for $g_0=f_0$.
\\Moreover, if there exist $T_1$ and $C$ such that $(\ref{Transportobservability})$ satisfies, then there exist $T_0$ and $\delta$ such that $(\ref{TransportExpoDecay})$ is true.
\end{proposition}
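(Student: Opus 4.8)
The plan is to recognise $(\ref{DS-transport})$ and $(\ref{DS-transportsimplied})$ as an instance of the abstract pair $(\ref{EquationAB})$--$(\ref{EquationA})$ of Section~3 and then to quote Lemma~$\ref{Lemma2}$. Take $H=L^2(\mathbb{T}^d\times V)$, $\mathcal{A}h=v\cdot\nabla h$ and $\mathcal{K}h=\sigma(x)(h-\bar h)$. Integration by parts in $x$ and periodicity give $\langle\mathcal{A}h,h\rangle=\frac12\int_{\mathbb{T}^d}\int_V v\cdot\nabla_x(h^2)\,dv\,dx=0$; since $\overline{h-\bar h}=0$ one checks that $\langle\mathcal{K}h,k\rangle=\int_{\mathbb{T}^d}\int_V\sigma\,(h-\bar h)(k-\bar k)\,dv\,dx$ is symmetric and nonnegative, that $\mathcal{K}^{1/2}h:=\sqrt{\sigma}\,(h-\bar h)$ is a linear self-adjoint square root of $\mathcal{K}$ in the sense of Section~3, and that $\|\mathcal{K}^{1/2}h\|^2=\int_{\mathbb{T}^d}\int_V\sigma\,(h-\bar h)^2\,dv\,dx$. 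Both evolutions conserve $\int_{\mathbb{T}^d}\int_V h$, so $f-f_\infty$ and $g-g_\infty$ solve the same equations with zero total mass; replacing $f$ by $f-f_\infty$ and $g$ by $g-g_\infty$, we work on the mean-zero closed subspace of $H$, on which the equilibrium is $0$. Under this dictionary $\|\mathcal{K}^{1/2}(g)\|^2$ is precisely the integrand on the left of $(\ref{Transportobservability})$, while $\|f_0-f_\infty\|^2$ and $\|g_0-g_\infty\|^2$ are the right-hand sides that appear there, and $H_f(t)=\|f(t)-f_\infty\|^2$.

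With these identifications the hypothesis $(\ref{Transportobservability})$ is exactly the observability inequality $(\ref{Observe})$ of Lemma~$\ref{Lemma2}$ (with $T_0=T_1$), and the conclusion $(\ref{TransportExpoDecay})$ is exactly $(\ref{ExDecay})$ after squaring and renaming the rate. Hence, once the structural hypotheses required by Lemma~$\ref{Lemma2}$ — those of Lemma~$\ref{Lemma1}$ or of Lemma~$\ref{Lemma3}$ — are verified for the present $\mathcal{A},\mathcal{K}$, the implication $(\ref{Transportobservability})\Rightarrow(\ref{TransportExpoDecay})$ is the first assertion of Lemma~$\ref{Lemma2}$ and the converse $(\ref{TransportExpoDecay})\Rightarrow(\ref{Transportobservability})$ is its ``moreover'' part; Lemma~$\ref{Lemma0}$ is used there unconditionally. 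So everything reduces to checking those structural hypotheses for the relaxation operator $\mathcal{K}$.

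This verification is the one non-routine point and the main obstacle. Because $\sigma$ is only integrable (not bounded), $\mathcal{K}^{1/2}$ fails to be bounded on all of $L^2(\mathbb{T}^d\times V)$, which is exactly why the statement is restricted to $f_0\in L^\infty$: I would run the argument with the Banach subspace $H'=L^\infty(\mathbb{T}^d\times V)$, $\|\cdot\|_0=\|\cdot\|_{L^\infty}$, as in Lemma~$\ref{Lemma3}$. The norm-invariance $(\ref{Lemma3D0})$ holds since the free transport $g(t,x,v)=g_0(x-vt,v)$ preserves $\|g(t)\|_{L^2}$ (a measure-preserving change of variables on the torus) and $\|g(t)\|_{L^\infty}$; the embedding $\|h\|_{L^2}\le\|h\|_{L^\infty}$ is automatic on the unit-measure space $\mathbb{T}^d\times V$. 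For the required splitting one truncates $\sigma$: with $\sigma=\sigma_{1,\epsilon}+\sigma_{2,\epsilon}$, $\sigma_{1,\epsilon}=\min(\sigma,1/\epsilon)$, and $\mathcal{K}_{\epsilon,i}h=\sigma_{i,\epsilon}(h-\bar h)$, conditions $(\ref{Lemma3D1})$, $(\ref{Lemma3D2})$ are immediate, $(\ref{Lemma3D3})$ holds with $C_1(\epsilon)=\epsilon^{-1/2}$ because $\|\mathcal{K}_{\epsilon,1}^{1/2}h\|^2\le\epsilon^{-1}\|h\|_{L^2}^2$, and $(\ref{Lemma3D4})$, $(\ref{Lemma3D4post})$ follow from $\|\mathcal{K}_{\epsilon,2}^{1/2}h\|^2=\int_{\mathbb{T}^d}\int_V\sigma_{2,\epsilon}(h-\bar h)^2\le 4\|\sigma_{2,\epsilon}\|_{L^1(\mathbb{T}^d)}\|h\|_{L^\infty}^2$ together with $\|\sigma_{2,\epsilon}\|_{L^1}\to 0$ as $\epsilon\to 0$ (dominated convergence, $\sigma\in L^1$). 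The last ingredient that forces the $L^\infty$ hypothesis on the data is that the \emph{damped} solution must also stay in $H'$ uniformly in time; this comes from a maximum-principle estimate along the characteristics $\dot x=v$: since $\sigma(\bar f-f)$ relaxes $f$ towards the velocity average $\bar f\in[\min_v f,\max_v f]$, the spatial maximum of $f$ is nonincreasing and the minimum nondecreasing, so $\|f(t)\|_{L^\infty}\le\|f_0\|_{L^\infty}$ for all $t$.

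Given all this, the Gronwall and triangle-inequality manipulations of Lemmas~$\ref{Lemma0}$, $\ref{Lemma3}$ and $\ref{Lemma2}$ apply essentially verbatim, the observability inequality $(\ref{Transportobservability})$ feeding in where $(\ref{Observe})$ is used. I expect the only real difficulty to be this interplay between the two norms — keeping the $L^\infty$-bound propagating while the observability is controlled in $L^2$ — and in matching the $\epsilon$-dependence of the constants $C_1(\epsilon),C_2(\epsilon)$ so that the truncated operator $\mathcal{K}_{\epsilon,1}$ is absorbed over long time intervals; the rest is bookkeeping identical to the Goldstein--Taylor case treated in Proposition~$\ref{TheoremExponentialDecays}$.
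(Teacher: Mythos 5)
Your overall reduction---taking $H=L^2(\mathbb{T}^d\times V)$, $\mathcal{A}=v\cdot\nabla_x$, $\mathcal{K}h=\sigma(h-\bar h)$, passing to mean-zero data, and invoking Lemma \ref{Lemma2}---is exactly the paper's route: the paper proves this proposition in one line, as a direct consequence of Lemmas \ref{Lemma0}, \ref{Lemma1} and \ref{Lemma2}, i.e.\ it files this model under Lemma \ref{Lemma1} (see the remark after Lemma \ref{Lemma3}: Lemma \ref{Lemma1} for Goldstein--Taylor and related models, Lemma \ref{Lemma3} for the linearized Boltzmann equation); the $L^\infty$ hypothesis on $f_0$ and the maximum principle enter only later, inside the proof of Proposition \ref{ProproTransportEnergy-Dampingwithweight}, to control the Fourier coefficients $a_p$. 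You instead reject Lemma \ref{Lemma1} (correctly observing that $\mathcal{K}^{1/2}h=\sqrt{\sigma}\,(h-\bar h)$ is not bounded on $L^2$ when $\sigma\notin L^\infty$) and propose to run Lemma \ref{Lemma3} with $H'=L^\infty(\mathbb{T}^d\times V)$ and the truncation $\sigma=\min(\sigma,1/\epsilon)+(\sigma-1/\epsilon)_+$.

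That substitution has a genuine gap: Lemma \ref{Lemma3} carries the additional hypothesis $(\ref{Lemma3D5pre})$, namely $\int_0^{T_0}\|\mathcal{K}^{1/2}(g)\|^2dt\geq C\|f_0\|_0^2$ with the $H'$-norm on the right-hand side, and you neither verify it nor can you: with $\|\cdot\|_0=\|\cdot\|_{L^\infty}$ it is false (take $f_0$ a spike of height $1$ supported on a set of small measure; the left side is of $L^2$ size and tends to $0$ while $\|f_0\|_{L^\infty}=1$), and the only observability available here, Proposition \ref{ProproTransportEnergy-Dampingwithweight}, is an $L^2$ lower bound. This hypothesis is not cosmetic: in the proof of Lemma \ref{Lemma3} it is precisely what allows the $\mathcal{K}_{\epsilon,2}$-contributions---which in your setting are of size $T\|\sigma_{2,\epsilon}\|_{L^1}\|f_0\|_{L^\infty}^2$---to be absorbed into a fixed fraction of $\int_0^T\|\mathcal{K}^{1/2}(g)\|^2dt$ in the step $(\ref{Lemma3D8})$. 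With only the $L^2$ lower bound you must choose $\epsilon$ depending on the ratio $\|f_0\|_{L^\infty}/\|f_0-f_\infty\|_{L^2}$, so the resulting constants, hence $C$ in $(\ref{Transportobservability})$ and $\delta$ in $(\ref{TransportExpoDecay})$, cease to be uniform in $f_0$, which the statement requires. Note the missing ingredient is needed only for the implication $(\ref{TransportExpoDecay})\Rightarrow(\ref{Transportobservability})$ (Step 2 of Lemma \ref{Lemma2}); the converse direction uses only Lemma \ref{Lemma0} and goes through as you describe. To close the argument you should either follow the paper and invoke Lemma \ref{Lemma1} for this model (supplying, if you wish, the Gronwall argument adapted to the $L^\infty$ bounds you established via the maximum principle), or formulate and prove a variant of Lemma \ref{Lemma3} whose hypothesis is compatible with an $L^2$ observability; quoting Lemma \ref{Lemma3} verbatim with $H'=L^\infty$ is not licit.
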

Theorem $\ref{TheoremTransportexponentialdecay2}$ is a direct consequence of Proposition $\ref{PropositionTransportExponentialDecays}$ and the following proposition.
\begin{proposition}\label{ProproTransportEnergy-Dampingwithweight} For $\sigma$ belongs to $H^1(\mathbb{T}^d)$, there exists a positive constant $T_0$ such that for $T>T_0$
\begin{equation}\label{TransportObservabilityWithWeight}
\int_0^T\int_{\mathbb{T}^d}\int_V\sigma|g-\bar{g}|^2dvdxdt \geq C(T) \int_0^T\int_{\mathbb{T}^d}|g_0-g_\infty|^2dxdt.
\end{equation}
\end{proposition}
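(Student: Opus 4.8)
The plan is to compute the left-hand side almost explicitly by expanding $g_0$ in a Fourier series in the space variable — just as in the proof of Proposition~\ref{ProproEnergy-DampingwithweightH1} — so as to isolate a main term of size $T\,\|g_0-g_\infty\|^2_{L^2(\mathbb{T}^d\times V)}$, and then to show that every remaining term is $o(T)$ as $T\to+\infty$. The hypotheses $\sigma\in H^1$ and $f_0\in L^\infty$ will enter precisely at this last point.

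First I would use that $g$ solves the free transport equation, hence $g(t,x,v)=g_0(x-vt,v)$, and expand $g_0(x,v)=\sum_{n\in\mathbb{Z}^d}a_n(v)e^{2\pi i n\cdot x}$; note that $\|a_n\|_{L^\infty(V)}\le\|g_0\|_{L^\infty}$ and $\sum_n|a_n(v)|^2=\int_{\mathbb{T}^d}|g_0(x,v)|^2\,dx\le\|g_0\|_{L^\infty}^2$ for every $v\in V$. Setting $\langle a_n\rangle(t):=\int_V a_n(w)e^{-2\pi i n\cdot w t}\,dw$ (so that $\langle a_0\rangle\equiv g_\infty$), one has $g(t,x,v)=\sum_n a_n(v)e^{-2\pi i n\cdot v t}e^{2\pi i n\cdot x}$ and $\bar g(t,x)=\sum_n\langle a_n\rangle(t)e^{2\pi i n\cdot x}$, hence $g-\bar g=\sum_n c_n(t,v)e^{2\pi i n\cdot x}$ with $c_n(t,v)=a_n(v)e^{-2\pi i n\cdot v t}-\langle a_n\rangle(t)$ and $\int_V c_n\,dv=0$. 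Parseval in $x$ gives, for each $t$,
\[
\int_{\mathbb{T}^d}\int_V\sigma\,|g-\bar g|^2\,dv\,dx=\sum_{n,m\in\mathbb{Z}^d}\widehat\sigma(m-n)\int_V c_n(t,v)\,\overline{c_m(t,v)}\,dv,
\]
and integrating in $t\in[0,T]$ I would split this into its diagonal part ($n=m$, coefficient $\widehat\sigma(0)=\int_{\mathbb{T}^d}\sigma>0$) and its off-diagonal part.

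For the diagonal part, $\int_V a_n(v)e^{-2\pi i n\cdot v t}\,dv=\langle a_n\rangle(t)$ yields $\int_0^T\!\int_V|c_n|^2\,dv\,dt=T\int_V|a_n|^2\,dv-\int_0^T|\langle a_n\rangle(t)|^2\,dt$, whence, summing,
\[
\sum_n\int_0^T\!\int_V|c_n|^2\,dv\,dt=T\,\|g_0-g_\infty\|_{L^2(\mathbb{T}^d\times V)}^2-\sum_{n\neq 0}\int_0^T|\langle a_n\rangle(t)|^2\,dt.
\]
The key observation is that $\langle a_n\rangle$ is the Fourier transform in $t$ of the push-forward $\rho_n$ of $a_n\,dv$ under $v\mapsto n\cdot v$; Cauchy--Schwarz on the level sets together with the coarea formula give $\|\rho_n\|_{L^2(\mathbb{R})}^2\le C(V)\,|n|^{-1}\int_V|a_n|^2\,dv$, so that Plancherel in $t$ provides
\[
\int_0^T|\langle a_n\rangle(t)|^2\,dt\ \le\ \int_{\mathbb{R}}|\langle a_n\rangle(t)|^2\,dt\ \le\ \frac{C(V)}{|n|}\int_V|a_n(v)|^2\,dv\qquad(n\neq 0),
\]
hence $\sum_{n\neq 0}\int_0^T|\langle a_n\rangle|^2\,dt\le C(V)\,\|g_0-g_\infty\|^2$ \emph{uniformly in $T$}. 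Thus the diagonal part is at least $\tfrac12\big(\int_{\mathbb{T}^d}\sigma\big)\,T\,\|g_0-g_\infty\|^2$ as soon as $T\ge 2C(V)$.

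The remaining term — where I expect the real difficulty to lie — is the off-diagonal sum. Writing $k=m-n$, the factor $\sum_n a_n(v)\overline{a_{n+k}(v)}$ is bounded in modulus by $\sum_n|a_n(v)|^2\le\|g_0\|_{L^\infty}^2$, which is the role of $f_0\in L^\infty$; separating the contribution of the $a_n$'s from that of the $\langle a_n\rangle$'s (the latter equal, up to sign, to $\int_0^T\!\int_{\mathbb{T}^d}\big(\sigma-\!\int_{\mathbb{T}^d}\sigma\big)|\bar g|^2\,dx\,dt$, which is $o(T)$ since $\bar g(t,\cdot)\to g_\infty$ and $\int_0^T\|\bar g(t,\cdot)-g_\infty\|_{L^2}^2\,dt$ is bounded), the off-diagonal sum is bounded by a constant times
\[
\|g_0\|_{L^\infty}^2\sum_{k\neq 0}|\widehat\sigma(k)|\int_V\Big|\int_0^T e^{2\pi i k\cdot v t}\,dt\Big|\,dv.
\]
Using $\big|\int_0^T e^{2\pi i s t}\,dt\big|\le\min\!\big(T,(\pi|s|)^{-1}\big)\le T^{1/2}(\pi|s|)^{-1/2}$ and $\int_V|k\cdot v|^{-1/2}\,dv\le C(V)\,|k|^{-1/2}$, this is $\le C(V)\,T^{1/2}\,\|g_0\|_{L^\infty}^2\sum_{k\neq 0}|\widehat\sigma(k)|\,|k|^{-1/2}$, and the last series converges by Cauchy--Schwarz because $\sigma\in H^1$. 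Hence the off-diagonal part is $O(T^{1/2})=o(T)$, and combining with the diagonal estimate I obtain, for $T$ large enough,
\[
\int_0^T\!\int_{\mathbb{T}^d}\!\int_V\sigma\,|g-\bar g|^2\,dv\,dx\,dt\ \ge\ \tfrac14\Big(\int_{\mathbb{T}^d}\sigma\Big)\,T\,\|g_0-g_\infty\|^2_{L^2(\mathbb{T}^d\times V)},
\]
which is the asserted inequality. The hard part is thus the off-diagonal estimate: the oscillatory integrals give only a $T^{1/2}$ (or, more crudely, $\log T$) gain, and this gain must be summed against the decay of $\widehat\sigma$ coming from $\sigma\in H^1$ and against the pointwise-in-$v$ bound on $\sum_n|a_n(v)|^2$ coming from $g_0\in L^\infty$ — drop either hypothesis and the relevant series diverges.
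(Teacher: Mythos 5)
Your proposal is correct and follows essentially the same route as the paper's proof: expand $g_0$ in Fourier modes in $x$, extract a diagonal contribution of size $T\big(\int_{\mathbb{T}^d}\sigma\big)\|g_0-g_\infty\|^2$, and show the off-diagonal terms are $o(T)$ by combining the oscillation in $k\cdot v\,t$ (your $\min\{T,|k\cdot v|^{-1}\}$ bound is the paper's $T^{3/4}|p-q|^{-1/4}$ small-divisor estimate in a different parametrization), the decay $|\widehat\sigma(k)|\le C\|\sigma\|_{H^1}|k|^{-1}$ coming from $\sigma\in H^1$, and the $L^\infty$ bound on the data; your Plancherel/push-forward control of the velocity averages is the paper's $\int_0^T|\hat a_n(nt)|^2dt\le |n|^{-1}\|a_n\|_{L^2}^2$ step in disguise. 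The only organizational difference is that you sum over the mode index $n$ first, using the pointwise bound $|\sum_n a_n\overline{a_{n+k}}|\le\|g_0\|_\infty^2$, and then over the shift $k$, whereas the paper bounds each pair $(p,q)$ separately (via an $L^3$ H\"older step) by $\|a_p\|_{L^2}\|a_q\|_{L^2}\big(T^{3/4}|p-q|^{-5/4}+(|p||q|)^{-1/2}|p-q|^{-1}\big)$ and sums afterwards. One caveat, which you share with the paper's own argument (its kernel $|p-q|^{-5/4}$ is likewise not summable over $\mathbb{Z}^d$ once $d\ge2$): the convergence of $\sum_{k\ne0}|\widehat\sigma(k)|\,|k|^{-1/2}$ for $\sigma\in H^1(\mathbb{T}^d)$ via Cauchy--Schwarz only holds in low dimension, so for larger $d$ you should interpolate $\min\{T,|s|^{-1}\}\le T^{1-\theta}|s|^{-\theta}$ with $\theta$ closer to $1$; neither your write-up nor the paper's is fully dimension-uniform at this point.
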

\begin{proof}
Write $g$ under the form of Fourier series
$$g(x,v,t)=g_0(x-vt,v)=\sum_{n\in\mathbb{Z}^d}a_n(v)\exp(i2\pi n(x-vt)).$$
\\ The left hand side of $(\ref{TransportObservabilityWithWeight})$ becomes
\begin{eqnarray}\label{TransportProof1}\nonumber
& &\int_0^T\int_{\mathbb{T}^d}\int_V\sigma|g-\bar{g}|^2dvdxdt\\\nonumber
&=&\int_0^T\int_{\mathbb{T}^d}\int_V\sigma[\sum_{n\in\mathbb{Z}^d}(a_n(v)\exp(-i{2\pi}nvt)\exp(i{2\pi}nx)\\
& &-\int_Va_n(v)\exp(-i{2\pi}nvt)\exp(i{2\pi}nx)dv)]^2dvdxdt\\\nonumber
&=&\lim_{M\to\infty}\int_0^T\int_{\mathbb{T}^d}\int_V\sigma\left[[\sum_{|n|<M,n\ne0}|a_n(v)\exp(-i{2\pi}nvt)-\int_Va_n(v)\exp(-i{2\pi}nvt)dv|^2\right.\\\nonumber
& &+\sum_{|p|,|q|<M;p,q\ne0;p\ne q}\exp(i(p-q)\pi x)\times\\\nonumber
& &\times(a_p(v)\exp(-i{2\pi}pvt)-\int_Va_p(v)\exp(-i{2\pi}pvt)dv)\times\\\nonumber
& &\left.\times\overline{a_q(v)\exp(-i{2\pi}qvt)-\int_Va_q(v)\exp(-i{2\pi}qvt)dv} ~~\right]dvdxdt.
\end{eqnarray}
In the last sum, consider the integral
\begin{eqnarray}\label{TransportProof2}\nonumber
& &\int_0^T\int_V(a_p(v)\exp(-i{2\pi}pvt)-\int_Va_p(v)\exp(-i{2\pi}pvt)dv)\times\\
& &\times(\overline{a_q(v)\exp(-i{2\pi}qvt)-\int_Va_q(v)\exp(-i{2\pi}qvt)dv}) dvdt\\\nonumber
&=&\int_0^T\int_V\exp(i2(q-p)\pi vt)a_p(v)\overline{a_q(v)}dvdt\\\nonumber
& &-\int_0^T\int_V\exp(-i2p\pi vt){a_p(v)}dv\int_V\exp(i2q\pi vt)\overline{a_q(v)}dvdt.
\end{eqnarray}
Consider the first integral in the previous equality
\begin{eqnarray}\label{TransportProof3}\nonumber
& &\left|\int_0^T\int_V\exp(i2(q-p)\pi vt)a_p(v)\overline{a_q(v)}dvdt\right|\\\nonumber
&=&\left|\int_V\frac{\sin(\pi(p-q)vT)}{\pi(p-q)v}[-\cos(\pi(p-q)vT)-i\sin(\pi(p-q)vT)]a_p(v)\overline{a_q(v)}dv\right|\\
&\leq& \|a_p\|_{L^3} \|a_q\|_{L^3}\left(\int_V\left|\frac{\sin(\pi(p-q)vT)}{\pi(p-q)v}\right|^3dv\right)^{1/3}\\\nonumber
&\leq& C\|a_p\|_{L^2} \|a_q\|_{L^2}\left(\int_V\left|\frac{\sin(\pi(p-q)vT)}{\pi(p-q)v}\right|^3dv\right)^{1/3},
\end{eqnarray}
the last inequality follows from the fact that $a_p,a_q\in L^{\infty}(V)$. Moreover, $C$ is a global constant, which depends on the structure of the equation and the $L^\infty$ bound of $f_0$ as well as  of $f$, since $f$ satisfies the maximum principle.\\
Denote $$\mathfrak{m}=\sup_{|\sigma|=1,v\in\mathbb{R}^d}|\{|\pi\sigma v|\leq 1\}|,$$
and $|O|$ to be the Lebesgue measure of a set $O$ in $\mathbb{R}^d$, consider the integral
\begin{eqnarray*}\nonumber
& &\int_V\left|\frac{\sin(\pi(p-q)vT)}{\pi(p-q)v}\right|^3dv\\\nonumber
&=&\int_{\{|\pi(p-q)v|\leq\epsilon, v\in V\}}\left|\frac{\sin(\pi(p-q)vT)}{\pi(p-q)v}\right|^3dv+\\
& &+\int_{\{|\pi(p-q)v|>\epsilon, v\in V\}}\left|\frac{\sin(\pi(p-q)vT)}{\pi(p-q)v}\right|^3dv\\\nonumber
&\leq&T^3|{\{|\pi(p-q)v|\leq\epsilon, v\in V\}}|+\frac{1}{\epsilon^3}|{\{|\pi(p-q)v|>\epsilon, v\in V\}}|\\\nonumber
&\leq&T^3\frac{\mathfrak{m}\epsilon^d}{|p-q|^d}+\frac{1}{\epsilon^3}|V|\\\nonumber
&\leq&T^3\frac{\mathfrak{m}\epsilon}{|p-q|}+\frac{1}{\epsilon^3}.
\end{eqnarray*}
Due to Cauchy's inequality
\begin{eqnarray*}\nonumber
T^3\frac{\mathfrak{m}\epsilon}{|p-q|}+\frac{1}{\epsilon^3}&\geq& 4\left(\frac{\mathfrak{m}}{3}\right)^{3/4}\frac{T^{9/4}}{|p-q|^{3/4}},
\end{eqnarray*}
the previous inequality implies
\begin{eqnarray*}\nonumber
\left(\int_V\left|\frac{\sin(\pi(p-q)vT)}{\pi(p-q)v}\right|^3dv\right)^{1/3}\leq C\frac{T^{3/4}}{|p-q|^{1/4}},
\end{eqnarray*}
for a suitably chosen $\epsilon$ and $C$ is some positive constant. Combine this inequality with $(\ref{TransportProof3})$
\begin{equation}\label{TransportProof4}\
\left|\int_0^T\int_V\exp(i(q-p)\pi vt)a_p(v)\overline{a_q(v)}dvdt\right|\leq C\|a_p\|_{L^2} \|a_q\|_{L^2}\frac{T^{3/4}}{|p-q|^{1/4}},
\end{equation}
where $C$ is some positive constant.
\\\ Consider the following term in $(\ref{TransportProof2})$
\begin{eqnarray}\label{TransportProof5}\nonumber
& &\left|\int_0^T\left(\int_V\exp(-ip\pi vt){a_p(v)}dv\int_V\exp(iq\pi vt)\overline{a_q(v)}dv\right)dt\right|\\
& \leq &\int_0^T|\hat{a_p}(p t)||\hat{a_q}(q t)|dt\leq\frac{1}{\sqrt{|p||q|}}\|\hat{a_p}\|_{L^2}\|\hat{a_q}\|_{L^2}\\\nonumber
&\leq&\frac{1}{\sqrt{|p||q|}}\|{a_p}\|_{L^2}\|{a_q}\|_{L^2},
\end{eqnarray}
where $\hat{a_p}$, $\hat{a_q}$ are the Fourier transforms in $V$ of $a_p$ and $a_q$ with the assumption that the values of $a_p$, $a_q$ outside of $V$ are $0$.
\\ Combine $(\ref{TransportProof2})$, $(\ref{TransportProof4})$ and $(\ref{TransportProof5})$ to get
\begin{eqnarray}\label{TransportProof6}\nonumber
& &|\int_0^T\int_V[(a_p(v)\exp(-i{2\pi}pvt)-\int_Va_p(v)\exp(-i{2\pi}pvt)dv)\times\\
& &\times\overline{a_q(v)\exp(-i{2\pi}qvt)-\int_Va_q(v)\exp(-i{2\pi}qvt)dv}] dvdt|\\\nonumber
&\leq&\|a_p \|_{L^2}\|{a_q}\|_{L^2}\left(\frac{1}{\sqrt{|p||q|}}+C\frac{T^{3/4}}{|p-q|^{1/4}}\right).
\end{eqnarray}
\\Suppose that $|p_k-q_k|=\max\{|p_1-q_1|,\dots,|p_d-q_d|\}$,
\begin{eqnarray*}
& &\left|\int_{\mathbb{T}^d}\sigma(x)\exp(i2\pi(p-q)x)dx\right|\\\nonumber
& =&\left|-\int_{\mathbb{T}^d}\partial_{k}\sigma(x)\frac{\exp(i2\pi(p-q)x)}{i2\pi(p_k-q_k)}dx\right|\\\nonumber
& =&\int_{\mathbb{T}^d}|\partial_{k}\sigma(x)|\frac{1}{2\pi|p_k-q_k|}dx\\\nonumber
&\leq&C\|\sigma\|_{H^1}\frac{1}{|p-q|},
\end{eqnarray*}
where $C$ is some positive constant. Combine this inequality with $(\ref{TransportProof6})$ 
\begin{eqnarray}\label{TransportProof7}\nonumber
& &|\int_{\mathbb{T}^d}\int_0^T\int_V\sigma(a_p(v)\exp(-i{2\pi}pvt)-\int_Va_p(v)\exp(-i{2\pi}pvt)dv)\times\\
& &\times\overline{a_q(v)\exp(-i{2\pi}qvt)-\int_Va_q(v)\exp(-i{2\pi}qvt)dv} dvdtdx|\\\nonumber
&\leq&C\|\sigma\|_{H^1}\|a_p \|_{L^2}\|{a_q}\|_{L^2}\left(\frac{1}{\sqrt{|p||q|}|p-q|}+C\frac{T^{3/4}}{|p-q|^{5/4}}\right).
\end{eqnarray}
Since 
\begin{eqnarray*}\nonumber
& &\int_0^T\int_V|a_n(v)\exp(-i{2\pi}nvt)-\int_Va_n(v)\exp(-i{2\pi}nvt)dv|^2dvdt\\\nonumber
&=&\int_0^T\left(\int_V|a_n(v)|^2dv-\left|\int_Va_n(v)\exp(-i{2\pi}nvt)dv\right|^2\right)dt\\\nonumber
&=&\int_0^T\left(\int_V|a_n(v)|^2dv-|\hat{a_n}(nt)|^2\right)dt\\\nonumber
&=&T\int_V|a_n(v)|^2dv-\int_0^T|\hat{a_n}(nt)|^2dt\\\nonumber
&>&(T-1)\int_V|a_n(v)|^2dv,
\end{eqnarray*}
Inequality $(\ref{TransportProof7})$ implies
\begin{eqnarray*}\label{TransportProof1}\nonumber
& &\int_0^T\int_{\mathbb{T}^d}\int_V\sigma|g-\bar{g}|^2dvdxdt\\\nonumber
&\geq& \sum_{n\in\mathbb{R}^d,n\ne0}\frac{T}{2}\int_V|a_n(v)|^2dv,
\end{eqnarray*}
for $T$ large enough, which leads to $(\ref{TransportObservabilityWithWeight})$.
\end{proof}
\section{Decay rates of the special transport equation}
\subsection{The Observability Inequality}
\begin{proposition}\label{3ProproTransportEnergy-Dampingwithweight} For $\sigma$ belongs to $C^\infty(\mathbb{T}^d)$, there exists a positive constant $T_0$ such that for $T>T_0$
\begin{equation}\label{3TransportObservabilityWithWeight}
\int_0^T\int_{\mathbb{T}^d}\int_V\sigma(1-\Delta_{x})^{-\epsilon}\sigma|g-\bar{g}|^2dvdxdt \geq C(T,\sigma) \sum_{n\in \mathbb{Z}^d}\frac{\int_{\mathbb{R}^d}|A_n(v)|^2dv}{(1+|n|^2)^\epsilon}.
\end{equation}
\end{proposition}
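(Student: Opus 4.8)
The plan is to mimic the proof of Proposition~\ref{ProproTransportEnergy-Dampingwithweight}: expand $g$ in an $x$-Fourier series, reduce the left-hand side of $(\ref{3TransportObservabilityWithWeight})$ to a bilinear sum over the modes, and split it into a diagonal part -- which carries the coercivity together with the correct $(1+|n|^2)^{-\epsilon}$ weight -- and an off-diagonal part controlled by oscillatory-integral estimates. Since $g$ solves free transport, $g(t,x,v)=g_0(x-vt,v)=\sum_{n\in\mathbb{Z}^d}a_n(v)e^{i2\pi n\cdot(x-vt)}$ with $a_n(v)$ the $n$-th $x$-Fourier coefficient of $f_0(\cdot,v)$; hence $\bar g(t,x)=\sum_n\widehat{a_n}(nt)e^{i2\pi n\cdot x}$ with $\widehat{a_n}(\zeta)=\int_V a_n(v)e^{-i2\pi v\cdot\zeta}\,dv$, and $(g-\bar g)(t,x,v)=\sum_n e^{i2\pi n\cdot x}b_n(t,v)$ with $b_n(t,v)=a_n(v)e^{-i2\pi n\cdot vt}-\widehat{a_n}(nt)$. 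Writing $\sigma=\sum_m\widehat\sigma_m e^{i2\pi m\cdot x}$ and using that $\int_V(g-\bar g)\,dv=0$, the left-hand side of $(\ref{3TransportObservabilityWithWeight})$ equals $\sum_k(1+|k|^2)^{-\epsilon}\int_0^T\int_V|c_k(t,v)|^2\,dv\,dt$ with $c_k=\sum_n\widehat\sigma_{k-n}b_n$; expanding $|c_k|^2$ and using
\begin{align*}
\int_0^T\int_V b_n\overline{b_{n'}}\,dv\,dt &= I_{n,n'}-J_{n,n'}, \\
I_{n,n'} &= \int_0^T\int_V a_n\overline{a_{n'}}\,e^{i2\pi(n'-n)\cdot vt}\,dv\,dt, \\
J_{n,n'} &= \int_0^T\widehat{a_n}(nt)\,\overline{\widehat{a_{n'}}(n't)}\,dt,
\end{align*}
one is left with a bilinear form $\sum_{n,n'}\Gamma_{n,n'}(I_{n,n'}-J_{n,n'})$, where $\Gamma_{n,n'}:=\sum_k(1+|k|^2)^{-\epsilon}\widehat\sigma_{k-n}\overline{\widehat\sigma_{k-n'}}$.

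For the diagonal $n=n'$ one has $\int_0^T\int_V|b_n|^2=T\|a_n\|_{L^2(V)}^2-\int_0^T|\widehat{a_n}(nt)|^2\,dt$, which for $n=0$ is exactly $T\|A_0\|_{L^2(V)}^2$ and for $n\ne0$ is at least $(T-C(V))\|a_n\|_{L^2(V)}^2$: substituting $s=|n|t$ and applying the one-dimensional Plancherel identity to the restriction of $\widehat{a_n}$ to the ray through $n$ bounds $\int_0^T|\widehat{a_n}(nt)|^2\,dt$ by $|n|^{-1}$ times the $L^2(\mathbb{R})$-norm squared of the projection of $a_n$ onto that direction, and the latter is $\le C(V)\|a_n\|_{L^2(V)}^2$ because $V$ is bounded. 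Since $\sigma\not\equiv0$, $\Gamma_{n,n}=\sum_j(1+|n+j|^2)^{-\epsilon}|\widehat\sigma_j|^2\ge c_\sigma(1+|n|^2)^{-\epsilon}$ with $c_\sigma=2^{-\epsilon}\sum_j(1+|j|^2)^{-\epsilon}|\widehat\sigma_j|^2>0$. Hence, for $T$ beyond a threshold depending only on $V$, the diagonal part dominates $\tfrac12 c_\sigma T\sum_n(1+|n|^2)^{-\epsilon}\|A_n\|_{L^2(V)}^2$, a constant multiple of the right-hand side of $(\ref{3TransportObservabilityWithWeight})$.

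For the off-diagonal $n\ne n'$ the inputs are: $|\Gamma_{n,n'}|\le(\Gamma_{n,n}\Gamma_{n',n'})^{1/2}$ and, because $\sigma\in C^\infty(\mathbb{T}^d)$, $|\Gamma_{n,n'}|$ decays faster than any power of $|n-n'|$ (this is what makes the $d$-dimensional sum converge, in contrast to the merely $H^1$ bound available in Section~5); $|J_{n,n'}|$ is bounded by Cauchy--Schwarz in $t$ together with the ray-Plancherel estimate above; and $|I_{n,n'}|$ by the sublevel-set estimate of Section~5, of the form $|I_{n,n'}|\le C(V)T^{3/4}|n-n'|^{-1/4}\|a_n\|_{L^3(V)}\|a_{n'}\|_{L^3(V)}$, where all $L^p(V)$-norms of $a_n$ decay rapidly in $n$ since $f_0\in C^\infty$. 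Summing these against $\Gamma_{n,n'}$ -- Schur's lemma handling the convolution structure in $n-n'$ and the rapid decay in $|n|,|n'|$ absorbing the weights -- bounds the off-diagonal part by $o(T)\sum_n(1+|n|^2)^{-\epsilon}\|A_n\|_{L^2(V)}^2$, hence by $\tfrac14 c_\sigma T\sum_n(1+|n|^2)^{-\epsilon}\|A_n\|_{L^2(V)}^2$ once $T$ is large. Combined with the diagonal lower bound this yields $(\ref{3TransportObservabilityWithWeight})$ with $C(T,\sigma)$ of order $T$.

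The main obstacle is precisely this last estimate: one must keep the $(1+|n|^2)^{-\epsilon}$ weights alive through the off-diagonal summation, and not spend the oscillatory gain in $|n-n'|$ on the ratios $\Gamma_{n,n'}/(\Gamma_{n,n}\Gamma_{n',n'})^{1/2}$, which are comparable to $1$ and not small. As in Section~5, this forces the bounds on $I_{n,n'}$ to use higher $L^p(V)$-norms of the modes, so the constant ultimately absorbs Sobolev or $L^\infty$ norms of $f_0$ permitted by the $C^\infty$ hypothesis; the smoothness of $\sigma$ is exactly what guarantees the remaining sum over $n-n'$ converges in arbitrary dimension $d$.
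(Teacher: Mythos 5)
Your proposal is correct in substance and reaches the stated bound, but it follows a genuinely different route from the paper at the two points where this proposition differs from Proposition \ref{ProproTransportEnergy-Dampingwithweight}. First, the paper deliberately simplifies: it proves the estimate only for data with $g_0(x,v)=g_0(x)$ (so $A_n$ independent of $v$), remarking that the $v$-dependent case follows from ``the technical tricks of the previous section''; you instead carry out the general case directly, re-using the ray--Plancherel bound for the $J_{n,n'}$ terms and the sublevel-set bound $|I_{n,n'}|\leq CT^{3/4}|n-n'|^{-1/4}\|a_n\|_{L^3}\|a_{n'}\|_{L^3}$ from Section 5. Second, and more substantially, the paper handles the weight $(1-\Delta_x)^{-\epsilon/2}$ by pseudodifferential calculus: it splits frequencies into $|n|\leq M$ and $|n|>M$ and, for the high modes, writes $(1-\Delta_x)^{-\epsilon/2}(\sigma e^{i2\pi nx})$ as $\sigma(1-\Delta_x)^{-\epsilon/2}e^{i2\pi nx}$ plus a commutator $[\sigma,(1-\Delta_x)^{-\epsilon/2}]$ of one order better, so that the diagonal retains the weight; the off-diagonal sum is then only claimed to converge ``for $\epsilon$ small enough''. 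You replace this by an explicit Plancherel computation: the matrix $\Gamma_{n,n'}=\sum_k(1+|k|^2)^{-\epsilon}\widehat\sigma_{k-n}\overline{\widehat\sigma_{k-n'}}$, a Peetre-type inequality giving $\Gamma_{n,n}\geq c_\sigma(1+|n|^2)^{-\epsilon}$, and the rapid decay of $\widehat\sigma$ (i.e.\ $\sigma\in C^\infty$) giving arbitrary polynomial decay of $\Gamma_{n,n'}$ in $|n-n'|$, after which Schur summation closes the off-diagonal estimate in any dimension $d$ and without any smallness restriction on $\epsilon$. This is arguably cleaner and even repairs a soft spot of the paper's argument (a bare $|p-q|^{-3/2}$ decay does not by itself give a convergent $d$-dimensional off-diagonal sum); it also keeps the bookkeeping consistent with the exponent $(1+|n|^2)^{-\epsilon}$ appearing in the statement, where the paper's own proof ends with $(1+|n|^2)^{-\epsilon/2}$. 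Two caveats you inherit from the paper rather than introduce yourself: the conversion $\|a_n\|_{L^3}\|a_{n'}\|_{L^3}\leq C\|a_n\|_{L^2}\|a_{n'}\|_{L^2}$ uses an $L^\infty$ bound on $f_0$ exactly as in Section 5, so your constants (and hence the threshold $T_0$) implicitly depend on $f_0$ in the same non-scale-invariant way as the paper's; and your claim that the $n=0$ diagonal term equals $T\|A_0\|_{L^2(V)}^2$ tacitly uses mean-zero data, which is the natural normalization here but should be stated.
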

\begin{proof} For $n$ in $\mathbb{Z}^d$, define
$$A_n(v)=\int_{(0,1)^d}\tilde{g}_0(x,v)\exp(-in2\pi x)dx.$$
For the sake of simplicity we only give the proof for the case where $g_0(x,v)=g_0(x)$, which means $A_n$ does not depend on $v$ for all $n$ in $\mathbb{Z}^d$. The case where $A_n$ depends on $v$ could be treated with the technical tricks of the previous section.
\\  Write $\tilde{g}$ under the form of Fourier series:
\begin{eqnarray*}
{g}(x,v,t)={g}_0(x-vt,v)=\sum_{n\in{\mathbb{Z}^d}}A_n\exp(i2\pi n(x-vt)),
\end{eqnarray*}
which deduces
\begin{eqnarray}\label{SpecialProof1}\nonumber
& &\int_{\mathbb{T}^d}\int_V\int_0^T\sigma(1-\Delta_{x})^{-\epsilon}\sigma|g-\bar{g}|^2dtdx\\\nonumber
&=&\lim_{m\to\infty}\int_{\mathbb{T}^d}\left\{\sum_{n\in\mathbb{Z}^d;|n|\leq m;n\ne 0}|A_n|^2|(1-\Delta_x)^{-\frac{\epsilon}{2}}\sigma\exp(i2\pi nx)|^2dx\times\right.\\\nonumber
& & \times\int_0^T\int_{V}|\exp(-i2\pi nvt)-\int_V\exp(-i2\pi nvt)dv|^2dvdt\\
& &+\sum_{p,q\in\mathbb{Z}^d;|p|,|q|\leq m;p\ne q; p,q\ne0}(1-\Delta_x)^{-\frac{\epsilon}{2}}\sigma\exp(i2\pi px)A_p\times\\\nonumber
& &\overline{(1-\Delta_x)^{-\frac{\epsilon}{2}}\sigma\exp(i2\pi qx)A_q}\int_0^T\int_V[(\exp(-i2\pi pvt)-\int_V\exp(-i2\pi pvt)dv)\times\\\nonumber
& &\left.\times\overline{\exp(-i2\pi qvt)-\int_V\exp(-i2\pi qvt)dv}]dvdt\right\}dx.
\end{eqnarray}
Similar as in the previous section, we have
\begin{eqnarray*}\label{SpecialProof2}
\int_0^T\int_{V}|\exp(-i2\pi nvt)-\int_V\exp(-i2\pi nvt)dv|^2dvdt\geq\frac{T}{2},
\end{eqnarray*}
for $T$ large enough, and
\begin{eqnarray*}\label{SpecialProof3}
& &\left|\int_0^T\int_V\left((\exp(-i2\pi pvt)-\int_V\exp(-i2\pi pvt)dv)\times\right.\right.\\\nonumber
& &\left.\left.\overline{\exp(-i2\pi qvt)-\int_V\exp(-i2\pi qvt)dv}\right)dvdt\right|\\\nonumber
&=&\left|\int_0^T\int_V\exp(i2\pi (q-p)vt)dvdt\right.\\\nonumber
& &\left.-\int_0^T(\int_V\exp(-i2\pi pvt)dv\int_V\exp(i2\pi qvt)dv)dt\right|\\\nonumber
&\leq& C\left(\frac{T^{1/2}}{|p-q|^{1/2}}+\frac{1}{\sqrt{|p||q|}}\right),
\end{eqnarray*}
where $C$ is some positive constant.
Consider the sum
\begin{eqnarray*}
&&\sum_{n\in\mathbb{Z}^d;|n|\leq m;|n|\leq M;n\ne0}|A_n|^2\int_{\mathbb{T}^d}|(1-\Delta_x)^{-\frac{\epsilon}{2}}\sigma\exp(i2\pi nx)|^2dx\times\\\nonumber
& & \times\int_0^T\int_{V}|\exp(-i2\pi nvt)-\int_V\exp(-i2\pi nvt)dv|^2dvdt\\
& \geq & \sum_{n\in\mathbb{Z}^d;|n|\leq m;|n|\leq M;n\ne0}\frac{T}{2}|A_n|^2\int_{\mathbb{T}^d}|(1-\Delta_x)^{-\frac{\epsilon}{2}}\sigma \exp(i2\pi nx)|^2dx\\
& \geq & TC(M)\sum_{n\in\mathbb{Z}^d;|n|\leq m;|n|\leq M;n\ne0}\frac{|A_n|^2}{(1+n^2)^\frac{\epsilon}{2}},
\end{eqnarray*}
and
\begin{eqnarray*}
&&\sum_{n\in\mathbb{Z}^d;m\geq|n|> M}|A_n|^2\int_{\mathbb{T}^d}|(1-\Delta_x)^{-\frac{\epsilon}{2}}\sigma\exp(i2\pi nx)|^2dx\times\\\nonumber
& & \times\int_0^T\int_{V}|\exp(-i2\pi nvt)-\int_V\exp(-i2\pi nvt)dv|^2dvdt\\
& \geq & CT\sum_{n\in\mathbb{Z}^d;m\geq|n|> M}|A_n|^2\int_{\mathbb{T}^d}|\sigma(1-\Delta_x)^{-\frac{\epsilon}{2}} \exp(i2\pi n(x-vt))|^2dx\\
& &-CT\sum_{n\in\mathbb{Z}^d;m\geq|n|> M}|A_n|^2\int_{\mathbb{T}^d}|[\sigma,(1-\Delta_x)^{-\frac{\epsilon}{2}}] \exp(i2\pi n(x-vt))|^2dx\\
& \geq & T C\sum_{n\in\mathbb{Z}^d;m\geq|n|> M}\frac{|A_n|^2}{(1+n^2)^\frac{\epsilon}{2}}\int_{\mathbb{T}^d}\sigma^2 dx\\
& &-TC'(\sigma)\sum_{n\in\mathbb{Z}^d;m\geq|n|> M}\frac{|A_n|^2}{(1+n^2)^{\frac{\epsilon+1}{2}}},
\end{eqnarray*}
where $M$, $C(M)$, $C$, $C'$ are  positive constants and the last inequality follows from the fact that the pseudodifferential operator $(1-\Delta_x)^{-\frac{\epsilon}{2}}$ is of order $\epsilon$ and the commutator $[\sigma,(1-\Delta_x)^{-\frac{\epsilon}{2}}]$ is of order $\epsilon+1$. Combine these two sums to get
\begin{eqnarray}\label{SpecialProof4}\nonumber
&&\sum_{n\in\mathbb{Z}^d}|A_n|^2\int_{\mathbb{T}^d}|(1-\Delta_x)^{-\frac{\epsilon}{2}}\sigma\exp(i2\pi nx)|^2dx\times\\
& & \times\int_0^T\int_{V}|\exp(-i2\pi nvt)-\int_V\exp(-i2\pi nvt)dv|^2dvdt\\\nonumber
& \geq & TC(\sigma)\sum_{n\in\mathbb{Z}^d}\frac{|A_n|^2}{(1+n^2)^\frac{\epsilon}{2}},
\end{eqnarray}
for $T$ and $m$ large enough, where $C(\sigma)$ is a constant depending on $\sigma$.
\\ Now, consider the term
\begin{eqnarray}\label{SpecialProof5}\nonumber
& &\left|\sum_{p,q\in\mathbb{Z}^d;|p|,|q|\leq m;p\ne q}(1-\Delta_x)^{-\frac{\epsilon}{2}}\sigma\exp(i2\pi px)A_p\times\right.\\\nonumber
& &\overline{(1-\Delta_x)^{-\frac{\epsilon}{2}}\sigma\exp(i2\pi qx)A_q}\int_0^T\left[\int_V(\exp(-i2\pi pvt)-\int_V\exp(-i2\pi pvt)dv)\times\right.\\
& &\left.\left.\times\overline{\exp(-i2\pi qvt)-\int_V\exp(-i2\pi qvt)dv}\right]dvdt\right|\\\nonumber
&\leq&\sum_{p,q\in\mathbb{Z}^d;|p|,|q|\leq m;p\ne q}C\frac{|A_p|}{(1+|p|^2)^{\frac{\epsilon}{2}}}
\frac{|A_q|}{(1+|q|^2)^{\frac{\epsilon}{2}}}T^{1/2}\left(\frac{T^{\frac{1}{2}}}{|p-q|^{3/2}}+\frac{1}{|p-q|\sqrt{|p||q|}}\right)\\\nonumber
&\leq&\sum_{p\in\mathbb{Z}^d;|p|\leq m}\left(\frac{|A_p|^2}{(1+|p|^2)^{{\frac{\epsilon}{2}}}}\sum_{q\in\mathbb{Z}^d;|q|\leq m;q\ne p}\left(\frac{1+|p|^2}{1+|q|^2}\right)^{\frac{\epsilon}{2}}\left[\frac{T^{1/2}}{|p-q|^{3/2}}+\frac{1}{|p-q|\sqrt{|p||q|}}\right)\right],
\end{eqnarray}
where the last sum converges for $\epsilon$ small enough.
\\ Combine $(\ref{SpecialProof1})$, $(\ref{SpecialProof4})$ and $(\ref{SpecialProof5})$ to get
\begin{eqnarray*}
\int_0^T\int_{\mathbb{T}^d}\int_V\sigma|g-\bar{g}|^2dvdxdt \geq C(T,\sigma) \sum_{n\in \mathbb{Z}^d}\frac{\int_{\mathbb{R}^d}|A_n(v)|^2dv}{(1+|n|^2)^\frac{\epsilon}{2}}.
\end{eqnarray*}
\end{proof}
\subsection{Convergence to Equilibrium: Proof of Theorem $\ref{TheoremTransportexponentialdecay3}$}
{\bf Step 1:} The boundedness of $||(1-\Delta_x)^{\frac{\epsilon}{2}} f||_{L^2}$.\\
Use $(1-\Delta_x)^\epsilon$ as a test function in $(\ref{DS-transportspecial})$ to get
$$\int_{\mathbb{T}^d}\partial_t f(1-\Delta_x)^\epsilon f+\int_{\mathbb{T}^d}v\partial_x f(1-\Delta_x)^\epsilon f=\int_{\mathbb{T}^d}\sigma(1-\Delta_x)^{-\epsilon}\sigma(\bar{f}-f)(1-\Delta_x)^\epsilon f.$$
This leads to 
$$\partial_t\|(1-\Delta_x)^{\frac{\epsilon}{2}} f\|^2_{L^2}\leq C\|f\|^2_{L^2},$$
which means
$$\|(1-\Delta_x)^{\frac{\epsilon}{2}} f(t)\|^2_{L^2}\leq Ct\|f(0)\|^2_{L^2}+\|(1-\Delta_x)^{\frac{\epsilon}{2}} f(0)\|_{L^2}^2.$$
{\bf Step 2:} The polynomial convergence.\\
The previous proposition and Lemma $\ref{Lemma1}$ imply
\begin{equation}\label{4ObservabilityHuWithWeight}
H_f(0)-H_f(T) \geq C(T,\sigma) \sum_{n\in \mathbb{Z}^d}\frac{|A_n|^2}{(1+|n|^2)^\frac{\epsilon}{2}}.
\end{equation}
Let $k_1$, $k_2$ and $k_3$ be positive numbers satisfying $-2\epsilon k_1+ k_2k_3>0$ and $k_2<\epsilon$. According to the Jensen Inequality
\begin{eqnarray*}
& &\left(\frac{\sum_{n\in\mathbb{Z}^d} \frac{|A_n|^2}{(1+|n|^2)^\frac{\epsilon}{2}}}{\sum_{n\in\mathbb{Z}^d} |A_n|^2}\right)^{k_1}\left(\frac{\sum_{n\in\mathbb{Z}^d} |A_n|^2 |n|^{k_2}}{\sum_{n\in\mathbb{Z}^d} |A_n|^2}\right)^{k_3}\\
&\geq& \left(\sum_{n\in\mathbb{Z}^d}\frac{|A_n|^2((1+|n|^2)^{-\frac{\epsilon k_1}{k_1+k_3}}|n|^{\frac{k_2k_3}{k_1+k_3}})}{\sum_{n\in\mathbb{Z}^d} |A_n|^2}\right)^{k_1+k_3}\\
&\geq& C\left(\sum_{n\in\mathbb{Z}^d}\frac{|A_n|^2}{\sum_{n\in\mathbb{Z}^d} |A_n|^2}\right)^{k_1+k_3}\\
&\geq&C,
\end{eqnarray*}
where $C$ is some positive constant, which yields
\begin{equation}\label{4H-estimate}
\sum_{n\in\mathbb{Z}^d} \frac{|A_n|^2}{(1+|n|^2)^\frac{\epsilon}{2}}\geq C\left(\sum_{n\in\mathbb{Z}^d} |A_n|^2\right)\left(\frac{\sum_{n\in\mathbb{Z}^d} |A_n|^2}{\sum_{n\in\mathbb{Z}^d} |A_n|^2 |n|^{k_2}}\right)^{\frac{k_3}{k_1}},
\end{equation}
for some positive constant $C$.\\
Denote $\tilde{f}=f-f_0$ and 
$$M((l-1)T)=\sum_{n\in\mathbb{Z}^d} |\hat{f(lT)}(n)|^2|n|^{k_2},$$
for $l\in\mathbb{N}\{0\}$, where $\hat{f(lT)}$ is the Fourier transform in $x$ of $f(lT)$.\\
Inequalities $(\ref{4ObservabilityHuWithWeight})$ and $(\ref{4H-estimate})$ imply
\begin{equation}\label{4RelationHu0T}
H_f(0)-CH_f(0)\left(\frac{ H_f(0)}{M(0)}\right)^{\frac{k_3}{k_1}}\geq H_f(T).
\end{equation}
Since the energy $H_f$ is decreasing, $(\ref{4RelationHu0T})$ deduces
\begin{equation}\label{4RelationHulT}
H_f(lT)-CH_f(lT)\left(\frac{H_f(lT)}{M(lT)}\right)^{\frac{k_3}{k_1}}\geq H_f((l+1)T),
\end{equation}
for all $l$ in $\mathbb{N}\cup\{0\}$. Step $1$ implies $M(lT)\leq lTC$, where C is a positive constant, which together with Inequality $(\ref{4RelationHulT})$ implies
 \begin{equation}\label{4RelationHulTb}
H_f(lT)-CH_f(lT)\left(\frac{H_f(lT)}{ lTC}\right)^{\frac{k_3}{k_1}}\geq H_f((l+1)T).
\end{equation}
Put $$\mathcal{E}_l=\frac{H_f(lT)}{ lTC},$$
Inequality $(\ref{4RelationHulTb})$ yields
$$\mathcal{E}_{l+1}\leq\mathcal{E}_l-C\mathcal{E}_{l+1}^{\frac{k_3}{k_1}+1},$$
where $C$ is some positive constant. 
According to Lemma $\ref{LemmaAmmari}$
$$H_f(lT)\leq C l\left(\frac{1}{l+1}\right)^{\frac{k_1}{k_3}}H_f(0),$$
where $C$ is some positive constant. Let $\frac{k_1}{k_2}$ tend to $\infty$ we get the theorem.
\section{Decay rates of the linearized Boltzmann equation}
Let $g$ be the solution of 
\begin{equation}\label{BoltzmannHomo}
\partial_t g+v\partial_x g=0,
\end{equation}
with the initial datum
$$g(0,x,v)=f_0(x,v),$$
where $f_0(x,v)$ is the initial datum of $(\ref{BoltzmannLinearized})$. For the sake of simplicity, we suppose that
\begin{equation}
\int_{\mathbb{R}^d}f_0dv=0.
\end{equation}
\subsection{The Observability Inequality}
Similar as in the previous sections, we prove
\begin{proposition}\label{BPropObservability} There exists a constant $T_*$, depending on the structure of the equation, such that for all $T>T_*$
\begin{eqnarray}\label{BoltzmannObservability}
& &\int_0^{T}\int_{\mathbb{T}^d\times\mathbb{R}^d\times\mathbb{R}^d\times\mathbb{S}^{d-1}}\mathcal{B}(|v_*-v|,\omega)\mu_*\mu\times\\\nonumber
& &\times[g'_*{\mu'_*}^{-1/2}+g'\mu'^{-1/2}-g_*\mu^{-1/2}_*-g\mu^{-1/2}]^2d\sigma dv_*dvdx\\\nonumber
& \geq &C\int_{\mathbb{T}^d\times\mathbb{R}^d}(|v|+1)^{\alpha}|f_0|^2dxdv.
\end{eqnarray}
\end{proposition}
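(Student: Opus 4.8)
The plan is to read the left-hand side of $(\ref{BoltzmannObservability})$ as (a positive multiple of) $\int_0^T\langle -Lg(t),g(t)\rangle\,dt$, where $-L$ is the linearized collision operator viewed as a nonnegative quadratic form on $L^2(\mathbb{T}^d\times\mathbb{R}^d)$, and then to use the Grad-type splitting $-L=\nu(v)\,\mathrm{Id}-K$ in the velocity variable, with collision frequency $\nu(v)=\int_{\mathbb{R}^d\times\mathbb{S}^{d-1}}\mathcal{B}(|v-v_*|,\omega)\mu(v_*)\,d\omega\,dv_*$ and $K$ the remainder integral operator, whose kernel $k(v,w)$ is controlled by $(\mathbb{B}_2)$. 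Thus
\[
\int_0^T\!\!\int\mathcal{B}\,\mu_*\mu\,[\cdots]^2\,d\sigma\,dv_*\,dv\,dx\;=\;c\int_0^T\langle -Lg(t),g(t)\rangle\,dt\;=\;c\int_0^T\!\Big(\!\int\nu(v)|g|^2-\langle Kg,g\rangle\!\Big)dt,
\]
and the idea announced in the introduction is realized by showing that the first (``diagonal'') term is exactly linear in $T$ and at least $c\,M_1T\int(|v|+1)^\alpha|f_0|^2$, while the second (``cross'') term grows only like $T^{\theta}$ with $\theta<1$, so it is absorbed once $T$ is large enough. Equivalently, one may expand the square $[g'_*\mu_*'^{-1/2}+g'\mu'^{-1/2}-g_*\mu_*^{-1/2}-g\mu^{-1/2}]^2$ into four squares and twelve cross products, using the pre/post-collisional involution $(v,v_*,\omega)\mapsto(v',v'_*,\omega)$ (which preserves $dv\,dv_*\,d\omega$ and $\mu\mu_*$) to reduce the squares of $g'$ and $g'_*$ to the $\nu(v)g(v)^2$ shape; the twelve products make up $\langle Kg,g\rangle$.

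For the diagonal term the decisive point is that $g$ solves free transport, $g(t,x,v)=f_0(x-vt,v)$, so by translation invariance on the torus $\int_{\mathbb{T}^d}|g(t,x,v)|^2\,dx=\int_{\mathbb{T}^d}|f_0(x,v)|^2\,dx$ for every $t$ and $v$. Hence $\int_0^T\!\int\nu(v)|g|^2=T\int_{\mathbb{T}^d\times\mathbb{R}^d}\nu(v)|f_0|^2$, and $(\mathbb{B}_1)$ gives $\nu(v)\ge M_1(|v|+1)^\alpha$, which yields the desired lower bound $\ge M_1T\int(|v|+1)^\alpha|f_0|^2$.

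For the cross term I would expand $f_0$ in a Fourier series in $x$, $f_0(x,v)=\sum_{n\in\mathbb{Z}^d}a_n(v)e^{2\pi in\cdot x}$, so that $g(t,x,v)=\sum_n a_n(v)e^{2\pi in\cdot(x-vt)}$; integrating in $x$ first yields, for each $n$, a contribution $\int_0^T\!\int\!\int k(v,w)a_n(v)\overline{a_n(w)}\,e^{2\pi in\cdot(v-w)t}\,dv\,dw\,dt$ (together with its $v\leftrightarrow v_*$ and pre/post-collisional relatives). For $n\ne0$ one uses $\big|\int_0^T e^{2\pi in\cdot(v-w)t}\,dt\big|\le\min\{T,(\pi|n\cdot(v-w)|)^{-1}\}$, splits the $w$-integral at the slab $\{|n\cdot(v-w)|\le\delta\}$ and optimizes in $\delta$, turning the absence of a time gain on a thin slab into an algebraic gain of the form $T^{\theta}|n|^{-\eta}$ per mode; summing over $n$ and using $(\mathbb{B}_2)$ via a Schur-type bound on $(|v|+1)^{-\alpha/2}k(v,w)(|w|+1)^{-\alpha/2}$ controls the nonzero-mode part of the cross term by $C\,T^{\theta}\int(|v|+1)^\alpha|f_0|^2$ with $\theta<1$. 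The mode $n=0$ of $g$ is the stationary function $a_0(v)$, whose total contribution to the left-hand side is $c\,T\langle -La_0,a_0\rangle\ge0$; on this finite-codimension subspace one invokes the classical spatially homogeneous coercivity of $-L$ on $(\ker L)^\perp$ (available under $(\mathbb{B}_1)$--$(\mathbb{B}_2)$ and the orthogonality of $f_0$ to $\ker L$ assumed in Theorem $\ref{TheoremLinearizedBoltzmannDecay}$), the only place such an estimate is needed. Adding the three contributions and taking $T$ beyond some $T_*$ depending only on $M_1,M_2,\alpha,\beta$ gives $(\ref{BoltzmannObservability})$ with $C\sim cM_1T_*$.

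The step I expect to be the main obstacle is the weighted control of the cross term: one must check that the dispersive gain extracted from the oscillatory factor $e^{2\pi in\cdot(v-w)t}$ genuinely survives the summation over all Fourier modes \emph{and} is measured against $(|v|+1)^\alpha|f_0|^2$, not a stronger weighted Sobolev norm. This is delicate in the soft-potential range $\alpha<0$, where $\nu(v)\to0$ at infinity so the margin between the diagonal and cross contributions is thin, and where the singularity of $\mathcal{B}$ permitted by $(\mathbb{B}_2)$ must be balanced precisely against the decay of the oscillatory integral. The treatment of the spatially homogeneous $n=0$ mode is a secondary, bookkeeping difficulty of the same nature.
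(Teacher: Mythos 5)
Your proposal follows essentially the same route as the paper's proof: write $g(t,x,v)=f_0(x-vt,v)$, expand in Fourier modes in $x$, observe that the quadratic form diagonalizes in $n$ after the $x$-integration, bound the dominant ``$\nu(v)$'' part from below by $M_1T\int(|v|+1)^\alpha|f_0|^2$ using $(\ref{B1})$ and the fact that free transport preserves $\int_{\mathbb{T}^d}|g|^2dx$, and control the remaining $K$-type terms by exploiting the time oscillation $e^{2\pi i n\cdot(v-w)t}$, splitting near the degenerate slab $\{|n\cdot(v-w)|\le\delta\}$ and using $(\ref{B3})$ so that the cross contribution carries the weight $(|v|+1)^{\beta-2/3}\le(|v|+1)^\alpha$ and grows sublinearly in $T$; this is exactly the structure of $(\ref{BCollisionKernelFourier})$ together with Parts 1--3 of the paper (compare your $\min\{T,(\pi|n\cdot(v-w)|)^{-1}\}$ device with $(\ref{BComponent2})$ and $(\ref{BComponent3})$, where the bound $\min\{TC(\epsilon),C/\epsilon\}$ is uniform in $n$). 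One of your worries is moot: since the collision operator acts only in $v$, cross terms between different spatial modes vanish after the $x$-integration, so no decay in $|n|$ and no summability argument over modes is needed -- per-mode bounds uniform in $n$ suffice, which is what the paper establishes.

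The genuine divergence is your treatment of the zero spatial mode. You propose to handle $a_0(v)=\int_{\mathbb{T}^d}f_0\,dx$ by invoking the ``classical spatially homogeneous coercivity of $-L$ on $(\ker L)^\perp$.'' That step is not justified under the paper's hypotheses: $(\ref{B1})$--$(\ref{B3})$ are only a lower bound on the collision frequency and a pointwise upper bound on $\mathcal{B}$, and a coercivity (or spectral-gap) estimate on $(\ker L)^\perp$ is precisely the input the paper is structured to avoid -- it is not a consequence of these assumptions without further structure (cutoff, compactness of $K$ relative to $\nu$, etc.), and importing it would also defeat the stated point of the method. The paper itself does not separate the $n=0$ mode; it applies the same absorption argument to all $n$ (note that its oscillatory gain, like yours, degenerates at $n=0$, so your instinct that this mode needs separate care is sound), but it never appeals to a homogeneous coercivity estimate. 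So, relative to the paper, your proof of the $n\ne0$ part is the same argument, while your fix for $n=0$ rests on an external estimate unavailable under $(\mathbb{B}_1)$--$(\mathbb{B}_2)$ and should be replaced or removed.
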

\begin{proof}
Since $g$ is a solution of $(\ref{BoltzmannHomo})$, it could be written under the form
\begin{eqnarray*}
g(t,x,v)=g_0(x-vt,v)=\sum_{n\in\mathbb{Z}^d}A_n(v)\exp(i2\pi n(x-vt)),
\end{eqnarray*}
this implies
\begin{eqnarray}\label{BCollisionKernelFourier}
& &\int_0^{T}\int_{\mathbb{T}^d\times\mathbb{R}^d\times\mathbb{R}^d\times\mathbb{S}^{d-1}}\mathcal{B}(|v_*-v|,\omega)\mu_*\mu\times\\\nonumber
& &\times[g'_*{\mu'_*}^{-1/2}+g'\mu'^{-1/2}-g\mu^{-1/2}-g_*\mu^{-1/2}_*]^2d\omega dv_*dvdxdt\\\nonumber
&=&\sum_{n\in\mathbb{Z}^d}\int_0^{T}\int_{\mathbb{R}^d\times\mathbb{R}^d\times\mathbb{S}^{d-1}}\mathcal{B}\mu_*\mu\left|{A_n}'_*{\mu'_*}^{-1/2}\exp(-i2\pi nv'_*t)+{A_n}'\mu'^{-1/2}\exp(-i2\pi nv't)\right.\\\nonumber
& &\left.-{A_n}_*\mu^{-1/2}_*\exp(-i2\pi nv_*t)-{A_n}\mu^{-1/2}\exp(-i2\pi nvt)\right|^2d\omega dv_*dvdt\\\nonumber
&=&4\sum_{n\in\mathbb{Z}^d}\int_0^{T}\int_{\mathbb{R}^d\times\mathbb{R}^d\times\mathbb{S}^{d-1}}\mathcal{B}\mu_*\mu[-{A_n}'_*{\mu'_*}^{-1/2}\overline{A_n}\mu^{-1/2}\exp(i2\pi n(v-v'_*)t)-\\\nonumber
& &-{A_n}'\mu'^{-1/2}\overline{A_n}\mu^{-1/2}\exp(i2\pi n(v-v')t)\\\nonumber
& &+{A_n}_*\mu^{-1/2}_*\overline{A_n}\mu^{-1/2}\exp(i2\pi n(v-v_*)t)+|{A_n}\mu^{-1/2}|^2]d\omega dv_*dvdt.
\end{eqnarray}
Using the same technique as in \cite{Grad:1958:PKT}, \cite{Grad:1963:ATB} and \cite{MouhotStrain:2007:SGC}, we  consider the components of the last integral of $(\ref{BCollisionKernelFourier})$ separately. 
{\\\bf Part 1:} Consider the dominant component of $(\ref{BCollisionKernelFourier})$
\begin{eqnarray}\label{BComponent1}
& &\int_0^{T}\int_{\mathbb{R}^d\times\mathbb{R}^d\times\mathbb{S}^{d-1}}\mathcal{B}\mu_*\mu|{A_n}\mu^{-1/2}|^2d\omega dv_*dvdt\\\nonumber
&=&T\int_{\mathbb{R}^d\times\mathbb{R}^d\times\mathbb{S}^{d-1}}\mathcal{B}\mu_*|{A_n}|^2d\omega dv_*dv\\\nonumber
&\geq&TC\int_{\mathbb{R}^d}(|v|+1)^\alpha|{A_n}|^2dv,
\end{eqnarray}
where $C$ is some positive constant.
{\\\bf Part 2:} Consider the second component of $(\ref{BCollisionKernelFourier})$
\begin{eqnarray}\label{BComponent2pre}\nonumber
& &\left|\int_0^{T}\int_{\mathbb{R}^d\times\mathbb{R}^d\times\mathbb{S}^{d-1}}\mathcal{B}\mu^{1/2}\mu_*^{1/2}{A_n}_*\overline{A_n}\exp(i2\pi n(v-v_*)t)d\omega dv_*dvdt\right|\\
&\leq&\int_{\mathbb{R}^d\times\mathbb{R}^d\times\mathbb{S}^{d-1}}\mathcal{B}\mu^{1/2}\mu_*^{1/2}|{A_n}_*||{A_n}|\frac{|\sin(\pi n(v-v_*)T)|}{|\pi n(v-v_*)|}d\omega dv_*dv\\\nonumber
&\leq& \frac{1}{2}\int_{\mathbb{R}^d\times\mathbb{R}^d\times\mathbb{S}^{d-1}}\mathcal{B}\mu^{1/2}\mu_*^{1/2}{|{A_n}|^2}\frac{|\sin(\pi n(v-v_*)T)|}{|\pi n(v-v_*)|}d\omega dv_*dv.
\end{eqnarray}
The kernel of $(\ref{BComponent2pre})$ could be bounded in the following way
\begin{eqnarray}\label{BComponent2Kernel}
& &\int_{\mathbb{R}^d\times\mathbb{S}^{d-1}}\mathcal{B}\mu_*^{1/2}\mu^{1/2}\frac{|\sin(\pi n(v-v_*)T)|}{|\pi n(v-v_*)|}d\omega dv_*\\\nonumber
&\leq&|\mathbb{S}^{d-1}|^{1/2}\left(\int_{\mathbb{R}^d\times\mathbb{S}^{d-1}}\mathcal{B}^2(\mu\mu_*)^{1/2}d\omega dv_*\right)^{1/2}\times\\\nonumber
& &\times\left(\int_{\mathbb{R}^d}\frac{|\sin(\pi n(v-v_*)T)|^2}{|\pi n(v-v_*)|^2}(\mu\mu_*)^{1/2}dv_*\right)^{1/2}\\\nonumber
&\leq& C(|v|+1)^{\beta-2/3}\left(\int_{\mathbb{R}^d}\frac{|\sin(\pi n(v-v_*)T)|^2}{|\pi n(v-v_*)|^2}(\mu\mu_*)^{1/2}dv_*\right)^{1/2},
\end{eqnarray}
where $C$ is some positive constant. 
\\ In order to estimate the last integral of $(\ref{BComponent2Kernel})$, let $\epsilon$ be a positive constant, we consider two cases.
\\ For $|n(v-v_*)|<\epsilon$,
\begin{eqnarray}\label{BComponent2Kernel1}
& &\left(\int_{\{|n(v-v_*)|\}<\epsilon}\frac{|\sin(\pi n(v-v_*)T)|^2}{|\pi n(v-v_*)|^2}(\mu\mu_*)^{1/2}dv_*\right)^{1/2}\\\nonumber
&\leq&T\left(\int_{\{|n(v-v_*)|\}<\epsilon}(\mu\mu_*)^{1/2}dv_*\right)^{1/2}\\\nonumber
&\leq& TC(\epsilon),
\end{eqnarray}
where $C(\epsilon)$ tends to $0$ as $\epsilon$ tends to $0$.
\\ For $|n(v-v_*)|>\epsilon$,
\begin{eqnarray}\label{BComponent2Kernel2}
& &\left(\int_{\{|n(v-v_*)|\}>\epsilon}\frac{|\sin(\pi n(v-v_*)T)|^2}{|\pi n(v-v_*)|^2}(\mu\mu_*)^{1/2}dv_*\right)^{1/2}\\\nonumber
&\leq&\left(\int_{\{|n(v-v_*)|\}>\epsilon}(\mu\mu_*)^{1/2}\frac{1}{\epsilon^2}dv_*\right)^{1/2}\\\nonumber
&\leq& \frac{C}{\epsilon},
\end{eqnarray}
where $C$ is some positive constant. 
 Inequalities $(\ref{BComponent2Kernel})$, $(\ref{BComponent2Kernel1})$ and $(\ref{BComponent2Kernel2})$ then imply
\begin{eqnarray}\label{BComponent2}\nonumber
& &\left|\int_0^{T}\int_{\mathbb{R}^d\times\mathbb{R}^d\times\mathbb{S}^{d-1}}\mathcal{B}\mu^{1/2}\mu_*^{1/2}{A_n}_*\overline{A_n}\exp(i2\pi n(v-v_*)t)d\omega dv_*dvdt\right|\\
&\leq&  \min\{TC(\epsilon), \frac{C}{\epsilon}\}\int_{\mathbb{R}^d}(|v|+1)^{\beta-2/3}|A_n(v)|^2dv.
\end{eqnarray}
{\\\bf Part 3:} Consider the last components of $(\ref{BCollisionKernelFourier})$, by the change of variables $\omega\to$ $-\omega$
\begin{eqnarray}\label{BComponent3pre}
&&I:=\\\nonumber
&=&\left|\int_0^{T}\int_{\mathbb{R}^d\times\mathbb{R}^d\times\mathbb{S}^{d-1}}\mathcal{B}\mu_*\mu[-{A_n}'_*{\mu'_*}^{-1/2}\overline{A_n}\mu^{-1/2}\exp(i2\pi n(v-v'_*)t)-\right.\\\nonumber
& &\left.-{A_n}'\mu'^{-1/2}\overline{A_n}\mu^{-1/2}\exp(i2\pi n(v-v')t)]d\omega dv_*dvdt\right|\\\nonumber
&=&\left|\int_0^{T}\int_{\mathbb{R}^d\times\mathbb{R}^d\times\mathbb{S}^{d-1}}2\mathcal{B}\mu_*\mu{A_n}'{\mu'}^{-1/2}\overline{A_n}\mu^{-1/2}\exp(i2\pi n(v-v')t)d\omega dv_*dvdt\right|\\\nonumber
&\leq&\int_{\mathbb{R}^d\times\mathbb{R}^d\times\mathbb{S}^{d-1}}2|v-v_*|^\beta|v-v'|^{d-2}|{A_n}'||{A_n}|\frac{|\sin(\pi n(v-v')T)|}{|\pi n(v-v')|}\mu_*^{1/2}{\mu_*'}^{1/2}d\omega dv_*dv,
\end{eqnarray}
the last inequality is derived by taking the integral in time. 
\\ Denote
$$K^*:=\int_{\mathbb{R}^d\times\mathbb{S}^{d-1}}2|v-v_*|^\beta|v-v'|^{d-2}|{A_n}'|\frac{|\sin(\pi n(v-v')T)|}{|\pi n(v-v')|}\mu_*^{1/2}{\mu_*'}^{1/2}d\omega dv_*,$$
and for $\omega$ fixed perform the following changes of variables on $K^*$: $v_*\to V=v_*-v$ and $V=r\omega+z$ with $z\in \omega^{\bot}$. Since the Jacobians of the two changes of variables are $1$,
$$K^*=\int_{\mathbb{R}^d\times\mathbb{S}^{d-1}}2r^{d-2}|{A_n(v+r\omega)}|\frac{|\sin(\pi rT n.\omega )|}{|\pi r n.\omega |}\left(\int_{\omega^{\bot}}(\mu_*{\mu_*'})^{1/2}|r\omega+z|^{\beta}dz\right)d\omega dr.$$
Now, make the change of variable $(r,\omega)\to W=r\omega$. The Jacobian of this change of variables is $2r^{-d+1}$.
$$K^*=\int_{\mathbb{R}^d\times\mathbb{S}^{d-1}}4|{A_n(v+W)}||W|^{-1}\frac{|\sin(\pi T n.W )|}{|\pi  n.W |}\left(\int_{W^{\bot}}(\mu_*{\mu_*'})^{1/2}|W+z|^\beta dz\right.)dW.$$
Since
\begin{eqnarray*}
|v_*|^2+|v_*'|^2 &=&|v+W+z|^2+|v+z|^2\\
&=&\frac{1}{2}|W+2(v+z)|^2+\frac{1}{2}|W|^2\\
&=&\frac{1}{2}|W+2(v.\omega)\omega|^2+2|z+v-(v.\omega)\omega|^2+\frac{1}{2}|W|^2,
\end{eqnarray*}
then
\begin{eqnarray*}
(\mu_*\mu_*')^{1/2}&=&(2\pi)^{-d/2}\exp\left(-\frac{|W|^2}{8}-\frac{|z+v-(v.\omega)\omega|^2}{2}-\frac{|W+2(v.\omega)\omega|^2}{8}\right),
\end{eqnarray*}
which implies
\begin{eqnarray*}
K^*& = &\int_{\mathbb{R}^d\times\mathbb{S}^{d-1}}4(2\pi)^{-d/2}|{A_n(v+W)}||W|^{-1}\exp(-\frac{|W|^2}{8}-\frac{|W+2(v.\omega)\omega|^2}{8})\\
& &\times\frac{|\sin(\pi T n.W )|}{|\pi  n.W |}\left(\int_{W^{\bot}}\exp(-\frac{|z+v-(v.\omega)\omega|^2}{2})|W+z|^\beta dz\right)dW.
\end{eqnarray*}
Define
\begin{eqnarray*}
K&:= &4(2\pi)^{-d/2}|v'-v|^{-1}\exp(-\frac{|v'-v|^2}{8}-\frac{|v'-v+2(v.\omega)\omega|^2}{8})\frac{|\sin(\pi T n.(v'-v) )|}{|\pi  n.(v'-v) |}\\
& &\times\left(\int_{\omega^{\bot}}\exp(-\frac{|z+v-(v.\omega)\omega|^2}{2})|v'-v+z|^\beta dz\right),
\end{eqnarray*}
then 
\begin{equation}\label{BComponent3prepre}
I\leq \int_{\mathbb{R}^d\times\mathbb{R}^d\times\mathbb{S}^{d-1}}K|A_n(v')||A_n(v)|d\omega dv'dv.
\end{equation}
Now, consider the integral in $z$ in the kernel $K$
\begin{eqnarray}\label{BComponent3K}
& &\int_{\omega^{\bot}}\exp(-\frac{|z+v-(v.\omega)\omega|^2}{2})|v'-v+z|^\beta dz\\\nonumber
&=& \int_{\omega^{\bot}}\exp(-\frac{|\overline{z}|^2}{2})|v'-v+\overline{z}-(v-(v.\omega)\omega)|^\beta dz\\\nonumber
&\leq& C(1+|v'-v-(v-(v.\omega)\omega)|)^\beta,
\end{eqnarray}
since $\beta>-(d-1)$, the integral is well-defined. Let $s$ be a real number, according to the inequality
$$(1+|\zeta'|)^s\leq C(1+|\zeta|)^s(1+|\zeta'-\zeta|)^{|s|},$$
the following estimate follows from $(\ref{BComponent3K})$
\begin{eqnarray}\label{BComponent3Ks}
& &\int_{\mathbb{R}^d}K(1+|v'|)^s dv'\\\nonumber
&\leq & C(1+|v|)^s\int_{\mathbb{R}^d}|v'-v|^{-1}\exp(-\frac{|v'-v|^2}{8}-\frac{|v'-v+2(v.\omega)\omega|^2}{8})\\\nonumber
& & \times\frac{|\sin(\pi T n.(v'-v) )|}{|\pi  n.(v'-v) |}(1+|v'-v-(v-(v.\omega)\omega)|)^\beta(1+|v'-v|)^{|s|}dv'\\\nonumber
&\leq & C(1+|v|)^s\left(\int_{\mathbb{R}^d}\frac{|\sin(\pi T n.(v'-v) )|^{3}}{|\pi  n.(v'-v) |^{3}}\exp(-\frac{|v'-v|^2}{8})dv'\right)^{1/3}\times\\\nonumber
& &\times \left(\int_{\mathbb{R}^d}|v'-v|^{-3/2}\exp(-\frac{|v'-v|^2}{8}-\frac{3|v'-v+2(v.\omega)\omega|^2}{16})\right.\\\nonumber
& &\left. \times(1+|v'-v-(v-(v.\omega)\omega)|)^{{3/2}\beta}(1+|v'-v|)^{{3/2}|s|}dv'\right)^{2/3}\\\nonumber
&\leq & (1+|v|)^s\min\{TC(\epsilon),\frac{C}{\epsilon}\}\left(\int_{\mathbb{R}^d}\exp(-\frac{|v'-v|^2}{8}-\frac{3|v'-v+2(v.\omega)\omega|^2}{16})\right.\\\nonumber
& &\left. \times C|v'-v|^{-{3/2}}(1+|v'-v-(v-(v.\omega)\omega)|)^{{3/2}\beta}(1+|v'-v|)^{{3/2}|s|}dv'\right)^{2/3},
\end{eqnarray}
the last inequality is obtained by the same argument that we use in Part 2. Now, we consider two cases $\beta\geq 0$ and $\beta<0$. 
{\\\it Case 1:} $\beta\geq 0$.
\begin{eqnarray*}\label{BComponent3KsBetaP}
& &\int_{\mathbb{R}^d}K(1+|v'|)^s dv'\\\nonumber
&\leq & (1+|v|)^s\min\{TC(\epsilon),\frac{C}{\epsilon}\}\left(\int_{\mathbb{R}^d}\exp(-\frac{|v'-v|^2}{8}-\frac{3|v'-v+2(v.\omega)\omega|^2}{16})\right.\\\nonumber
& &\left. \times C|v'-v|^{-{3/2}}(1+|v-(v.\omega)\omega|)^{{3/2}\beta}(1+|v'-v|)^{{3/2}|s|+{3/2}\beta}dv'\right)^{2/3}\\\nonumber
&\leq & (1+|v|)^{s+\beta}\min\{TC(\epsilon),\frac{C}{\epsilon}\}\left(\int_{\mathbb{R}^d}\exp(-\frac{|v'-v|^2}{8}-\frac{3|v'-v+2(v.\omega)\omega|^2}{16})\right.\\\nonumber
& &\left. \times C|v'-v|^{-{3/2}}(1+|v'-v|)^{{3/2}|s|+{3/2}\beta}dv'\right)^{2/3}.
\end{eqnarray*}
Denote 
$$J_1:=\int_{\mathbb{R}^d}\exp(-\frac{|v'-v|^2}{8}-\frac{3|v'-v+2(v.\omega)\omega|^2}{16})|v'-v|^{-{3/2}}(1+|v'-v|)^{{3/2}|s|+{3/2}\beta}dv'.$$
Perform the changes of variables $V\to u=v'-v$ and $u=r\omega$, $r\in\mathbb{R}_+$, $\omega\in S^{d-1}$, and choose $v$ as the north pole vector in the angle parametrization
\begin{eqnarray*}
J_1 & = &|S^{d-2}|\int_0^{\infty}r^{d-{5/2}}(1+r)^{{3/2}|s|+{3/2}\beta}\exp(-\frac{r^2}{8})\times\\
& &\times\int_0^\pi\exp(-\frac{3(r+2|v|\cos\varphi)^2}{16})\sin^{d-2}(\varphi)d\varphi dr.
\end{eqnarray*}
For the case $d\geq 3$, since $\sin^{d-2}(\varphi)\leq\sin(\varphi)$,
\begin{eqnarray*}
J_1 &\leq&|S^{d-2}|\int_0^{\infty}r^{d-{5/2}}(1+r)^{{3/2}|s|+{3/2}\beta}\exp(-\frac{r^2}{8})\times\\
& &\times\int_0^\pi\exp(-\frac{3(r+2|v|\cos\varphi)^2}{16})\sin(\varphi)d\varphi dr.
\end{eqnarray*}
Now, make the change of variables $y=r+2|v|\cos(\varphi)$ in the $\varphi$ integral to get
\begin{eqnarray*}
J_1&\leq& |S^{d-2}||v|^{-1}\int_0^{\infty}r^{d-{5/2}}(1+r)^{{3/2}|s|+{3/2}\beta}\exp(-\frac{r^2}{8})\int_{-\infty}^{\infty}\exp(-3\frac{y^2}{16})dy dr\\\nonumber
&\leq &C|v|^{-1},
\end{eqnarray*}
where $C$ is a positive constant. Notice that since $\beta> 0$, the integral
$$\int_0^{\infty}r^{d-{5/2}}(1+r)^{{3/2}|s|+{3/2}\beta}\exp(-\frac{r^2}{8})dr,$$
is well-defined.
\\ For the case $d=2$, we perform the same change of variables
\begin{eqnarray*}
J_1&\leq& |S^{d-2}||v|^{-1}\int_0^{\infty}r^{d-{5/2}}(1+r)^{{3/2}|s|+{3/2}\beta}\exp(-\frac{r^2}{8})\\\nonumber
& &\times\int_{r-2|v|}^{r+2|v|}\exp(-\frac{3y^2}{16})(1-(\frac{y-r}{2|v|})^2)^{-1/2}dy dr\\\nonumber
&\leq& C\int_0^{\infty}r^{d-{5/2}}(1+r)^{{3/2}|s|+{3/2}\beta}\exp(-\frac{r^2}{8})\\\nonumber
& &\times\int_{r-2|v|}^{r+2|v|}\exp(-\frac{3y^2}{16})(4|v|^2-(y-r)^2)^{-1/2}dy dr,
\end{eqnarray*}
where $C$ is some positive constant.
\\ We consider the integral in two regions $|y-r|\leq |v|$ and $|y-r|\geq |v|$. On the first region, $(4|v|^2-(y-r)^2)^{-1/2}\leq |v|^{-1}$. On the second region, either $r\geq |v|/2$ or $|y|\geq |v|/2$ gives an exponential decay. Finally, we get
$$\int_{\mathbb{R}^d}K(1+|v'|)^sdv'\leq C(1+|v|)^{\beta-2/3+s}.$$
{\\\it Case 2:} $\beta< 0$.
\begin{eqnarray*}\label{BComponent3KsBetaN}
& &\int_{\mathbb{R}^d}K(1+|v'|)^s dv'\\\nonumber
&\leq & (1+|v|)^s\min\{TC(\epsilon),\frac{C}{\epsilon}\}\left(\int_{\mathbb{R}^d}\exp(-\frac{|v'-v|^2}{8}-\frac{3|v'-v+2(v.\omega)\omega|^2}{16})\right.\\\nonumber
& &\left. \times C|v'-v|^{-{3/2}}(1+|v-(v.\omega)\omega|)^{{3/2}\beta}(1+|v'-v|)^{{3/2}|s|+{3/2}|\beta|}dv'\right)^{2/3}.
\end{eqnarray*}
Again, perform the change of variables $u=r\omega$, $r\in\mathbb{R}_+$, $\omega\in\mathbb{S}^{d-1}$, choose $v$ as the north pole vector in the angle parametrization. 
Denote 
\begin{eqnarray*}
J_2&=&|S^{d-2}|\int_0^{\infty}r^{d-{5/2}}(1+r)^{{3/2}|s|+{3/2}|\beta|}\exp(-\frac{r^2}{8})\times\\
& &\times\int_0^\pi(1+|v|\sin(\varphi))^{{3/2}\beta}\exp(-\frac{3(r+2|v|\cos\varphi)^2}{16})\sin^{d-2}(\varphi)d\varphi dr.
\end{eqnarray*}
Split the integral into two region $|\cos(\varphi)|\leq\frac{1}{\sqrt{2}}$ and $|\cos(\varphi)|>\frac{1}{\sqrt{2}}$. In the first case, since $\sin(\varphi)\geq\frac{1}{\sqrt{2}}$, then
$$(1+|v|\sin(\varphi))^{{3/2}\beta}\leq C(1+|v|)^{{3/2}\beta},$$
the proof is then similar as in the case $\beta>0$. In the second case,
$$\frac{(r+2|v|\cos(\varphi)^2)}{2}\geq \frac{|v|^2}{12}-\frac{r^2}{16},$$
this leads to an exponential decay in $v$.
Finally, we get
$$\int_{\mathbb{R}^d}K(1+|v'|)^sdv'\leq C(1+|v|)^{\beta-2/3+s}.$$
Combine this estimate with $(\ref{BComponent3pre})$, $(\ref{BComponent3prepre})$, $(\ref{BComponent3K})$ and $(\ref{BComponent3Ks})$ to get
\begin{eqnarray*}
I&\leq& \int_{\mathbb{R}^d\times\mathbb{R}^d}|A_n(v)||A_n(v')|Kdvdv'\\
&\leq& \left(\int_{\mathbb{R}^d}|A_n(v)|^2(1+|v|)^{\beta -2/3}\right)^{1/2}[\int_{\mathbb{R}^d}(1+|v|)^{-\beta+2/3}\int_{\mathbb{R}^d}K(v,v')dv'\times\\
& &(\int_{\mathbb{R}^d}K(v,v'')|A_n(v'')|^2dv'')dv]^{1/2},
\end{eqnarray*}
which implies
\begin{equation}\label{BComponent3}
I\leq C\min\{TC(\epsilon),\frac{C}{\epsilon}\}\int_{\mathbb{R}^d}|A_n(v)|^2(1+|v|)^{\beta-2/3}.
\end{equation}
When $\epsilon$ is small and $T$ is large enough, $(\ref{BCollisionKernelFourier})$, $(\ref{BComponent1})$, $(\ref{BComponent2})$, $(\ref{BComponent3})$ imply
\begin{eqnarray}\label{BObservability}
& &\int_0^{T}\int_{\mathbb{T}^d\times\mathbb{R}^d\times\mathbb{R}^d\times\mathbb{S}^{d-1}}\mathcal{B}(|v_*-v|,\omega)\mu_*\mu\times\\\nonumber
& &\times[g'_*{\mu'_*}^{-1/2}+g'\mu'^{-1/2}-g\mu^{-1/2}-g_*\mu^{-1/2}_*]^2d\omega dv_*dvdx\\\nonumber
&\geq &CT\sum_{n\in\mathbb{Z}^d}\int_{\mathbb{R}^d}(1+|v|)^\alpha |A_n(v)|^2dv\\\nonumber
&\geq & CT\int_{\mathbb{T}^d\times\mathbb{R}^d}|g_0|^2dxdv.
\end{eqnarray}
\end{proof}
\begin{proposition}\label{BPropObserve}
Suppose that $\alpha,\beta>0 $ and there exist positive numbers $T_1$ and  $C$ such that
\begin{eqnarray}\label{BoltzmannObservability}
& &\int_0^{T_1}\int_{\mathbb{T}^d\times\mathbb{R}^d\times\mathbb{R}^d\times\mathbb{S}^{d-1}}\mathcal{B}(|v_*-v|,\omega)\mu_*\mu\times\\\nonumber
& &\times[g'_*{\mu'_*}^{-1/2}+g'\mu'^{-1/2}-g\mu^{-1/2}-g_*\mu^{-1/2}_*]^2d\sigma dv_*dvdx\\\nonumber
& \geq &C\int_{\mathbb{T}^d\times\mathbb{R}^d}(|v|+1)^\alpha |f_0|^2dxdv.
\end{eqnarray}
then  there exist positive numbers $T_0$ and $\delta$ such that $\forall$ $t\geq T_0$, $\forall$ $f_0\in L^{\infty}(\mathbb{T}^d\times\mathbb{R}^d)\cap L^{\infty}(\mathbb{R}^d,H^1(\mathbb{T}^d))$
\begin{equation}\label{BoltzmannExponentialDecay}
H_f(t)\leq\exp(-\delta t) H_f(0),
\end{equation}
\end{proposition}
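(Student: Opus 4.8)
The plan is to recognise the linearized Boltzmann equation $(\ref{BoltzmannLinearized})$ as an instance of the abstract evolution $(\ref{EquationAB})$ and to extract the exponential decay from the abstract machinery of Section 3. Concretely, I take $H=L^2(\mathbb{T}^d\times\mathbb{R}^d)$, $\mathcal{A}=v\cdot\nabla_x$, $\mathcal{K}=-L$. The transport operator $\mathcal{A}$ is skew-symmetric on $H$, so $\langle\mathcal{A}h,h\rangle=0$; the operator $\mathcal{K}$ is self-adjoint and, by the energy identity recalled just before $(\ref{BoltzmannEnergy})$, non-negative, with $\|\mathcal{K}^{1/2}(h)\|^2=\tfrac14\int_{\mathbb{R}^d\times\mathbb{R}^d\times\mathbb{S}^{d-1}}\mathcal{B}\mu_*\mu[\,\cdot\,]^2\,d\omega dv_*dv$, which up to the integration in $x$ and a harmless constant is exactly the quadratic form appearing in $(\ref{BoltzmannObservability})$. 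With this dictionary $(\ref{BoltzmannHomo})$ is $(\ref{EquationA})$, and since $\alpha>0$ forces $(|v|+1)^\alpha\geq1$, the hypothesis $(\ref{BoltzmannObservability})$ yields at once $\int_0^{T_1}\|\mathcal{K}^{1/2}(g)\|^2\,dt\geq C\|f_0\|_{L^2}^2$, i.e. the observability condition $(\ref{Observe})$ of Lemma $\ref{Lemma2}$.

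It then remains to verify that $\mathcal{K}$ satisfies the conditions of Lemma $\ref{Lemma3}$, which is the relevant version of the abstract lemma here because $\mathcal{K}^{1/2}$ is unbounded on $L^2$ once $\alpha>0$. For the auxiliary Banach subspace I take $H'$ to be (a weighted form of) $L^\infty(\mathbb{T}^d\times\mathbb{R}^d)\cap L^\infty(\mathbb{R}^d,H^1(\mathbb{T}^d))$, the norm $\|\cdot\|_0$ also controlling a velocity moment of order slightly above $\alpha/2$, so that $\|h\|_{L^2}\leq M\|h\|_0$. Property $(\ref{Lemma3D0})$ holds because free transport conserves the $L^2$ norm ($\partial_t\|g\|^2=-2\langle v\cdot\nabla_x g,g\rangle=0$). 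The decomposition $\mathcal{K}=\mathcal{K}_{\epsilon,1}+\mathcal{K}_{\epsilon,2}$ is produced by splitting the Dirichlet form (equivalently the collision operator, via the Grad-type splitting $L=\nu(v)-K$) according to whether $|v|\leq R_\epsilon$ or $|v|>R_\epsilon$, with $R_\epsilon\to\infty$ as $\epsilon\to0$: on the truncated region $\nu$ is bounded, giving $(\ref{Lemma3D3})$ with $C_1(\epsilon)\sim R_\epsilon^{\alpha/2}$ (which is allowed to blow up); on the complementary region the surplus velocity weight in $\|\cdot\|_0$ supplies a factor $R_\epsilon^{-\eta/2}$, giving $(\ref{Lemma3D4})$ with $C_2(\epsilon)\to0$; additivity $(\ref{Lemma3D2})$ is immediate since the two pieces live on disjoint velocity shells; and $(\ref{Lemma3D4post})$ is the global bound $\|\mathcal{K}^{1/2}(h)\|^2\leq C\int\nu(v)|h|^2\leq C\|h\|_0^2$. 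All of these use only $(\mathbb{B}_1)$, $(\mathbb{B}_2)$ and $\alpha,\beta>0$ together with the classical structure of the linearized collision operator — no coercivity or spectral gap is needed. Finally $(\ref{Lemma3D5pre})$ in the $\|\cdot\|_0$-sense is again read off directly from the right-hand side of $(\ref{BoltzmannObservability})$, which is precisely a weighted $L^2$ norm.

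A subsidiary point is the propagation of the $H'$ norm along both flows, needed to keep the cross terms in the Gronwall step of Lemma $\ref{Lemma3}$ bounded on $[0,T]$: for free transport $g(t,x,v)=g_0(x-vt,v)$, so the $L^\infty$ norm, the $H^1_x$ norm and all velocity moments are exactly conserved; for $(\ref{BoltzmannLinearized})$ the $L^\infty$ bound comes from the $L^\infty$ well-posedness theory for this linear equation, and $\partial_x f$ solves the same linear equation so its $L^2$ norm is non-increasing, whence $\|f(t)\|_0$ and $\|g(t)\|_0$ stay bounded on any finite interval. With the hypotheses of Lemma $\ref{Lemma3}$ in place and $(\ref{Observe})$ established, Lemma $\ref{Lemma2}$ gives $T_0,\delta>0$ with $\|f(t)\|_{L^2}\leq\exp(-\delta t)\|f_0\|_{L^2}$ for $t\geq T_0$, that is $H_f(t)\leq\exp(-2\delta t)H_f(0)$, which is $(\ref{BoltzmannExponentialDecay})$ after relabelling. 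I expect the real difficulty to lie in the decomposition step: proving that, under the weak assumptions $(\mathbb{B}_1)$–$(\mathbb{B}_2)$, the collision frequency $\nu$ genuinely dominates in the sense demanded by $(\ref{Lemma3D1})$–$(\ref{Lemma3D4post})$, i.e. that the remainder operator $K$ can be absorbed in a slightly stronger weighted norm with an arbitrarily small constant; this is the analogue of the classical Grad-type estimates on the linearized operator, and once it is in hand the remainder is bookkeeping inside the framework of Section 3.
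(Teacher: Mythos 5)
Your overall strategy --- identify $\mathcal{A}=v\cdot\nabla_x$, $\mathcal{K}=-L$, verify the hypotheses of Lemma \ref{Lemma3} and conclude through Lemma \ref{Lemma2} --- is the same as the paper's, but your concrete choice of the auxiliary space $H'$ breaks the argument at condition $(\ref{Lemma3D5pre})$. You take $\|\cdot\|_0$ to be an $L^\infty(\mathbb{T}^d\times\mathbb{R}^d)\cap L^\infty(\mathbb{R}^d,H^1(\mathbb{T}^d))$-type norm augmented by a velocity moment of order slightly above $\alpha/2$, and you assert that $(\ref{Lemma3D5pre})$ ``is read off directly'' from the hypothesis $(\ref{BoltzmannObservability})$. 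It is not: the right-hand side of $(\ref{BoltzmannObservability})$ is the weighted $L^2$ quantity $\int(|v|+1)^\alpha|f_0|^2\,dx\,dv$, while Lemma \ref{Lemma3} requires the time-integrated dissipation of $g$ to dominate $\|f_0\|_0^2$, i.e.\ the \emph{stronger} norm. A weighted $L^2$ lower bound can never dominate an $L^\infty$-based norm (consider a tall thin spike), nor a velocity moment of order strictly larger than $\alpha/2$; so with your $H'$ the key hypothesis of Lemma \ref{Lemma3} is simply unavailable. The paper avoids this by taking $H'$ to be precisely the weighted $L^2$ space whose squared norm matches the right-hand side of $(\ref{BoltzmannObservability})$ (written there as $L^2((1+|v|)^\alpha)$), so that $(\ref{Lemma3D5pre})$ \emph{is} the hypothesis of the proposition; the $L^\infty\cap L^\infty_v H^1_x$ regularity of $f_0$ in the statement is not what $H'$ is built from.

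Your decomposition is also different from the paper's and, as stated, incomplete. You cut at $|v|\le R_\epsilon$ and claim $(\ref{Lemma3D3})$ because ``$\nu$ is bounded on the truncated region'' and $(\ref{Lemma3D2})$ because ``the pieces live on disjoint velocity shells''. But the Dirichlet form couples $v,v_*,v',v'_*$: even with $|v|\le R_\epsilon$ the terms $g(v')$, $g(v'_*)$, $g(v_*)$ range over all velocities, so $L^2$-boundedness of the truncated piece is not a statement about $\nu$ alone --- it needs the Grad-type gain estimates, i.e.\ the Part~2/Part~3 computations of Proposition \ref{BPropObservability}; moreover a cut in $|v|$ is not invariant under $v\leftrightarrow v_*$ or $(v,v_*)\to(v',v'_*)$, so the split must be performed on the quadratic form itself (not on the operator via $\nu-K$) for $(\ref{Lemma3D2})$ and the definition of $\mathcal{K}_{\epsilon,i}^{1/2}$ to make sense. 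The paper instead truncates the kernel at $|v-v_*|\le 1/\epsilon$; since $|v'-v'_*|=|v-v_*|$, this cut respects all collision symmetries, each piece remains a nonnegative self-adjoint form so $(\ref{Lemma3D2})$ is automatic, the truncated operator is $L^2$-bounded because there the kernel is bounded (by a constant depending on $\epsilon$) against the Gaussian weights, and the complementary piece satisfies $\|L_{\epsilon,2}[g]\|_{L^2}^2\le C(\epsilon)\int(|v|+1)^\beta|g|^2dx\,dv$ with the smallness required by $(\ref{Lemma3D4})$, obtained by the same estimates as in Proposition \ref{BPropObservability}. Your cut in $|v|$ could likely be repaired by importing those same kernel estimates, but as written both the choice of $H'$ (hence $(\ref{Lemma3D5pre})$) and the justification of $(\ref{Lemma3D2})$--$(\ref{Lemma3D4})$ are genuine gaps.
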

\begin{proof} We check that $L$ satisfies the conditions $(\ref{Lemma3D1})$, $(\ref{Lemma3D2})$, $(\ref{Lemma3D3})$, $(\ref{Lemma3D4})$. Let $\epsilon$ be any positive constant, define 
$$I_\epsilon:=\chi\left(|v-v_*|\leq \frac{1}{\epsilon}\right),$$
\begin{eqnarray*}\label{BoltzmannObservability}
L_{\epsilon,1}[g]
&:=&-\int_0^{T_1}\int_{\mathbb{T}^d\times\mathbb{R}^d\times\mathbb{S}^{d-1}}I_\epsilon\mathcal{B}(|v_*-v|,\omega)\mu_*\mu^{1/2}\times\\\nonumber
& &\times[g'_*{\mu'_*}^{-1/2}+g'\mu'^{-1/2}-g\mu^{-1/2}-g_*\mu^{-1/2}_*]d\sigma dv_*dx,
\end{eqnarray*}
\begin{eqnarray*}\label{BoltzmannObservability}
L_{\epsilon,2}[g]
&:=&-\int_0^{T_1}\int_{\mathbb{T}^d\times\mathbb{R}^d\times\mathbb{S}^{d-1}}(1-I_\epsilon)\mathcal{B}(|v_*-v|,\omega)\mu_*\mu^{1/2}\times\\\nonumber
& &\times[g'_*{\mu'_*}^{-1/2}+g'\mu'^{-1/2}-g\mu^{-1/2}-g_*\mu^{-1/2}_*]d\sigma dv_*dx.
\end{eqnarray*}
It is not difficult to see that $L$, $L_{\epsilon,1}$, $L_{\epsilon,2}$ satisfy $(\ref{Lemma3D1})$, $(\ref{Lemma3D2})$, $(\ref{Lemma3D3})$, with $H'=L^2((1+|v|)^\alpha)$. Proceed similar as in the previous proposition to get
$$\|L_{\epsilon,2}[g]\|_{L^2}^2\leq C(\epsilon)\int_{\mathbb{T}^d\times\mathbb{R}^d}(|v|+1)^\beta|g|^2dxdv,$$
which means that $(\ref{Lemma3D4})$ is satisfied. By Lemma $\ref{Lemma3}$, the conclusion of the proposition follows.
\end{proof}
\subsection{Convergence to Equilibrium: Proof of Theorem $\ref{TheoremLinearizedBoltzmannDecay}$}
The case $\alpha,\beta> 0$ is straight forward from Proposition $\ref{BPropObservability}$ and Proposition $\ref{BPropObserve}$. We now prove the theorem for the case $-d+1<\alpha,\beta<0$.
According to Proposition $\ref{BPropObservability}$ and Lemma $\ref{Lemma1}$, there exist a time $T$ and a constant $C$ such that
\begin{eqnarray*}\label{BPropPolynomial1}
\|f(0)\|_{L^2}^2-\|f(T)\|_{L^2}^2\geq C\int_0^T\int_{\mathbb{T}^d\times\mathbb{R}^d}(|v|+1)^\alpha|f(0)|^2dxdv.
\end{eqnarray*}
This implies that for all $k$ in $\mathbb{d}$
\begin{equation}\label{BPropPolynomial1}
\|f(kT)\|_{L^2}^2-\|f((k+1)T)\|_{L^2}^2\geq C\int_0^T\int_{\mathbb{T}^d\times\mathbb{R}^d}(|v|+1)^\alpha|f(kT)|^2dxdv.
\end{equation}
Now, for positive numbers $k_1$, $k_2$ and $k_3$ satisfying $\alpha k_1+k_2k_3>0$, according to the Holder inequality 
\begin{equation}\label{BPropPolynomial2}
\left(\int_{\mathbb{T}^d\times\mathbb{R}^d}(|v|+1)^\alpha |f(kT)|^2\right)^{k_1}\left(\int_{\mathbb{T}^d\times\mathbb{R}^d}(|v|+1)^{k_2} |f(kT)|^2\right)^{k_3}\geq \left(\int_{\mathbb{T}^d\times\mathbb{R}^d}|f(kT)|^2\right)^{k_1+k_3}.
\end{equation}
Combine $(\ref{BPropPolynomial1})$ and $(\ref{BPropPolynomial2})$ to get
\begin{equation}\label{BPropPolynomial3}
\|f((k+1)T)\|_{L^2}^2\leq \|f(kT)\|_{L^2}^2-C\frac{\|f(kT)\|_{L^2}^{2\frac{k_1+k_3}{k_1}}}{\left(\int_{\mathbb{T}^d\times\mathbb{R}^d}(|v|+1)^{k_2} |f(kT)|^2\right)^{\frac{k_3}{k_1}}}.
\end{equation}
Now, choose $(|v|+1)^{k_2}f$ as a test function in the linearized Boltzmann equation, for $k_2$ small enough, as $\beta<0$ there exists a negative number $k'_2$ depending on $k_2$ such that
\begin{eqnarray*}
\|(|v|+1)^{k_2/2}f(kT)\|_{L^2}^2-\|(|v|+1)^{k_2/2}f(0)\|_{L^2}^2&\leq&\int_0^{kT}\|(|v|+1)^{k_2'/2}f(t)\|_{L^2}^2dt\\\nonumber
&\leq & {kTC}\|f(0)\|_{L^2}^2,
\end{eqnarray*}
then 
\begin{equation}\label{BPropPolynomial4}
\|(|v|+1)^{k_2/2}f(kT)\|_{L^2}^2 \leq  {kTC}\|f(0)\|_{L^2((1+|v|)^{k_2/2})}^2,
\end{equation}
where $C$ is some positive constant.
The two inequalities $(\ref{BPropPolynomial3})$ and $(\ref{BPropPolynomial4})$ lead to
\begin{eqnarray*}
\|f((k+1)T)\|_{L^2}^2\leq \|f(kT)\|_{L^2}^2-C\frac{\|f(kT)\|_{L^2}^{2\frac{k_1+k_3}{k_1}}}{k^{\frac{k_3}{k_1}}}.
\end{eqnarray*}
This implies
\begin{eqnarray*}
\frac{\|f((k+1)T)\|_{L^2}^2}{k+1}\leq \frac{\|f(kT)\|_{L^2}^2}{k}-C\left(\frac{\|f(kT)\|_{L^2}^2}{k}\right)^{\frac{k_1+k_3}{k_1}}.
\end{eqnarray*}
According to Lemma $\ref{LemmaAmmari}$,
$$\|f(kT)\|^2\leq \frac{Mk}{(k+1)^{\frac{k_1}{k_3}}}.$$
Let $\frac{k_1}{k_3}>-\frac{\alpha}{k_2}$ tend to $\infty$, we get the theorem.
\section{Conclusion} We have presented a new approach to the problem of convergence to equilibrium of kinetic equations. The idea of our technique is to prove a 'weak' coercive estimate on the damping. The approach seems to work very well in the context of linear equations. An reasonable question is if this technique could be extended to study the trend to equilibrium of nonlinear kinetic equations, where a typical example is the nonlinear Boltzmann equation. In an ongoing project, we are trying to analyse this method for the linearized Uehling-Uhlenbeck equation, where a spectral gap estimate is hard to obtain but a 'weak' coercive estimate is easier to get. 
\\ {\bf Acknowledgements.} The author would like to thank his advisor, Professor Enrique Zuazua for suggesting this research topic, for his great guidance, his wise advices and his constant encouragement while supervising this work. He would also like to thank Professor Belhassen Dehman for fruitful discussions. The author has been partially supported by the ERC Advanced Grant FP7-246775 NUMERIWAVES. 
\bibliographystyle{plain}\bibliography{ConvergenceToEquilibriumVersion1}

\begin{thebibliography}{10}

\bibitem{AmmariTucsnak:2000:SBE}
Kais Ammari and Marius Tucsnak.
\newblock Stabilization of {B}ernoulli-{E}uler beams by means of a pointwise
  feedback force.
\newblock {\em SIAM J. Control Optim.}, 39(4):1160--1181 (electronic), 2000.

\bibitem{AmmariTucsnak:2001:SSO}
Kais Ammari and Marius Tucsnak.
\newblock Stabilization of second order evolution equations by a class of
  unbounded feedbacks.
\newblock {\em ESAIM Control Optim. Calc. Var.}, 6:361--386 (electronic), 2001.

\bibitem{AokiGolse:2011:OSA}
Kazuo Aoki and Fran{\c{c}}ois Golse.
\newblock On the speed of approach to equilibrium for a collisionless gas.
\newblock {\em Kinet. Relat. Models}, 4(1):87--107, 2011.

\bibitem{BarangerMouhot:2005:ESG}
C{\'e}line Baranger and Cl{\'e}ment Mouhot.
\newblock Explicit spectral gap estimates for the linearized {B}oltzmann and
  {L}andau operators with hard potentials.
\newblock {\em Rev. Mat. Iberoamericana}, 21(3):819--841, 2005.

\bibitem{CaceresCarrilloGoudon:2003:ERL}
Maria~J. C{\'a}ceres, Jos{\'e}~A. Carrillo, and Thierry Goudon.
\newblock Equilibration rate for the linear inhomogeneous relaxation-time
  {B}oltzmann equation for charged particles.
\newblock {\em Comm. Partial Differential Equations}, 28(5-6):969--989, 2003.

\bibitem{CercignaniIllnerPulvirenti:1994:MTD}
Carlo Cercignani, Reinhard Illner, and Mario Pulvirenti.
\newblock {\em The mathematical theory of dilute gases}, volume 106 of {\em
  Applied Mathematical Sciences}.
\newblock Springer-Verlag, New York, 1994.

\bibitem{DesvillettesVillani:2001:OTG}
L.~Desvillettes and C.~Villani.
\newblock On the trend to global equilibrium in spatially inhomogeneous
  entropy-dissipating systems: the linear {F}okker-{P}lanck equation.
\newblock {\em Comm. Pure Appl. Math.}, 54(1):1--42, 2001.

\bibitem{DesvillettesVillani:2005:OTG}
L.~Desvillettes and C.~Villani.
\newblock On the trend to global equilibrium for spatially inhomogeneous
  kinetic systems: the {B}oltzmann equation.
\newblock {\em Invent. Math.}, 159(2):245--316, 2005.

\bibitem{Desvillettes:2006:HTE}
Laurent Desvillettes.
\newblock Hypocoercivity: the example of linear transport.
\newblock In {\em Recent trends in partial differential equations}, volume 409
  of {\em Contemp. Math.}, pages 33--53. Amer. Math. Soc., Providence, RI,
  2006.

\bibitem{DesvillettesSalvarani:2009:ABD}
Laurent Desvillettes and Francesco Salvarani.
\newblock Asymptotic behavior of degenerate linear transport equations.
\newblock {\em Bull. Sci. Math.}, 133(8):848--858, 2009.

\bibitem{EckmannHairer:2003:SPH}
J.-P. Eckmann and M.~Hairer.
\newblock Spectral properties of hypoelliptic operators.
\newblock {\em Comm. Math. Phys.}, 235(2):233--253, 2003.

\bibitem{FellnerNeumannSchmeiser:2004:CGE}
Klemens Fellner, Lukas Neumann, and Christian Schmeiser.
\newblock Convergence to global equilibrium for spatially inhomogeneous kinetic
  models of non-micro-reversible processes.
\newblock {\em Monatsh. Math.}, 141(4):289--299, 2004.

\bibitem{Glassey:1996:CPK}
Robert~T. Glassey.
\newblock {\em The {C}auchy problem in kinetic theory}.
\newblock Society for Industrial and Applied Mathematics (SIAM), Philadelphia,
  PA, 1996.

\bibitem{Grad:1958:PKT}
Harold Grad.
\newblock Principles of the kinetic theory of gases.
\newblock In {\em Handbuch der {P}hysik (herausgegeben von {S}. {F}l\"ugge),
  {B}d. 12, {T}hermodynamik der {G}ase}, pages 205--294. Springer-Verlag,
  Berlin, 1958.

\bibitem{Grad:1963:ATB}
Harold Grad.
\newblock Asymptotic theory of the {B}oltzmann equation. {II}.
\newblock In {\em Rarefied {G}as {D}ynamics ({P}roc. 3rd {I}nternat. {S}ympos.,
  {P}alais de l'{UNESCO}, {P}aris, 1962), {V}ol. {I}}, pages 26--59. Academic
  Press, New York, 1963.

\bibitem{Guo:2002:LPB}
Yan Guo.
\newblock The {L}andau equation in a periodic box.
\newblock {\em Comm. Math. Phys.}, 231(3):391--434, 2002.

\bibitem{Haraux:1989:RSC}
A.~Haraux.
\newblock Une remarque sur la stabilisation de certains syst\`emes du
  deuxi\`eme ordre en temps.
\newblock {\em Portugal. Math.}, 46(3):245--258, 1989.

\bibitem{HelfferNier:2005:HES}
Bernard Helffer and Francis Nier.
\newblock {\em Hypoelliptic estimates and spectral theory for {F}okker-{P}lanck
  operators and {W}itten {L}aplacians}, volume 1862 of {\em Lecture Notes in
  Mathematics}.
\newblock Springer-Verlag, Berlin, 2005.

\bibitem{Herau:2006:HET}
Fr{\'e}d{\'e}ric H{\'e}rau.
\newblock Hypocoercivity and exponential time decay for the linear
  inhomogeneous relaxation {B}oltzmann equation.
\newblock {\em Asymptot. Anal.}, 46(3-4):349--359, 2006.

\bibitem{HerauNier:2004:IHT}
Fr{\'e}d{\'e}ric H{\'e}rau and Francis Nier.
\newblock Isotropic hypoellipticity and trend to equilibrium for the
  {F}okker-{P}lanck equation with a high-degree potential.
\newblock {\em Arch. Ration. Mech. Anal.}, 171(2):151--218, 2004.

\bibitem{JaffardTucsnakZuazua:1998:SIS}
St{\'e}phane Jaffard, Marius Tucsnak, and Enrique Zuazua.
\newblock Singular internal stabilization of the wave equation.
\newblock {\em J. Differential Equations}, 145(1):184--215, 1998.

\bibitem{Mouhot:2006:ECE}
Cl{\'e}ment Mouhot.
\newblock Explicit coercivity estimates for the linearized {B}oltzmann and
  {L}andau operators.
\newblock {\em Comm. Partial Differential Equations}, 31(7-9):1321--1348, 2006.

\bibitem{MouhotNeumann:2006:QPS}
Cl{\'e}ment Mouhot and Lukas Neumann.
\newblock Quantitative perturbative study of convergence to equilibrium for
  collisional kinetic models in the torus.
\newblock {\em Nonlinearity}, 19(4):969--998, 2006.

\bibitem{MouhotStrain:2007:SGC}
Cl{\'e}ment Mouhot and Robert~M. Strain.
\newblock Spectral gap and coercivity estimates for linearized {B}oltzmann
  collision operators without angular cutoff.
\newblock {\em J. Math. Pures Appl. (9)}, 87(5):515--535, 2007.

\bibitem{StrainGuo:2006:AED}
Robert~M. Strain and Yan Guo.
\newblock Almost exponential decay near {M}axwellian.
\newblock {\em Comm. Partial Differential Equations}, 31(1-3):417--429, 2006.

\bibitem{StrainGuo:2008:EDS}
Robert~M. Strain and Yan Guo.
\newblock Exponential decay for soft potentials near {M}axwellian.
\newblock {\em Arch. Ration. Mech. Anal.}, 187(2):287--339, 2008.

\bibitem{Villani:2002:RMT}
C{\'e}dric Villani.
\newblock A review of mathematical topics in collisional kinetic theory.
\newblock In {\em Handbook of mathematical fluid dynamics, {V}ol. {I}}, pages
  71--305. North-Holland, Amsterdam, 2002.

\bibitem{Villani:2006:HDO}
C{\'e}dric Villani.
\newblock Hypocoercive diffusion operators.
\newblock In {\em International {C}ongress of {M}athematicians. {V}ol. {III}},
  pages 473--498. Eur. Math. Soc., Z\"urich, 2006.

\bibitem{Villani:2009:H}
C{\'e}dric Villani.
\newblock Hypocoercivity.
\newblock {\em Mem. Amer. Math. Soc.}, 202(950):iv+141, 2009.

\end{thebibliography}
\end{document}